\documentclass[12pt]{amsart}

\usepackage{amsmath}
\usepackage{amsfonts}
\usepackage{amssymb}
\usepackage[all]{xy}           
\usepackage{xcolor}
\usepackage{bbding}
\usepackage{txfonts}
\usepackage{amscd}

\usepackage[shortlabels]{enumitem}
\usepackage{ifpdf}
\ifpdf
  \usepackage[colorlinks,final,
  hyperindex]{hyperref}
\else
  \usepackage[colorlinks,final,
  hyperindex]{hyperref}
\fi
\usepackage{tikz}
\usepackage[active]{srcltx}

\topmargin -1.7cm \textheight 24.6cm \oddsidemargin -0.2cm \evensidemargin -0.2cm
\textwidth 17cm

\makeatletter

\newtheorem{thm}{Theorem}[section]
\newtheorem{lem}[thm]{Lemma}
\newtheorem{cor}[thm]{Corollary}
\newtheorem{pro}[thm]{Proposition}
\newtheorem{ex}[thm]{Example}
\newtheorem{rmk}[thm]{Remark}
\newtheorem{defi}[thm]{Definition}

\setlength{\baselineskip}{1.8\baselineskip}

\numberwithin{equation}{section}

\newcommand{\g}{\mathfrak {g}}

\newcommand{\kl}{\mathfrak l}
\newcommand{\kr}{\mathfrak r}

\newcommand{\bz}{\mathbb Z}

\newcommand{\fl}{\mathbf{l}}
\newcommand{\fr}{\mathbf{r}}

\newcommand{\id}{\mathrm{id}}

\newcommand{\md}{\mathrm{D}}

\newcommand{\End}{\mathrm{End}}
\newcommand{\ad}{\mathrm{ad}}

\newcommand{\DYBE}{\mathrm{DYBE}}
\newcommand{\CYBE}{\mathrm{CYBE}}
\newcommand{\AYBE}{\mathrm{AYBE}}
\newcommand{\PLYBE}{\mathrm{PLYBE}}


\begin{document}

\title[Affinization of dendriform $\md$-bialgebras, Lie bialgebras and solutions of YBE]
{Affinization of dendriform $\md$-bialgebras, Lie bialgebras and solutions of
classical Yang-Baxter equation}

\author{Bo Hou}
\address{School of Mathematics and Statistics, Henan University, Kaifeng 475004,
China}
\email{houbo@henu.edu.cn, bohou1981@163.com}


\vspace{-5mm}


\begin{abstract}
In this paper, we mainly discuss how to use dendriform $\md$-bialgebras to construct
Lie bialgebras and the relationship between the solutions of their corresponding
Yang-Baxter equations. We provide two methods for obtaining Lie algebras from
dendriform algebras using the tensor product with perm algebras, one by means
of associative algebras and the other by means of pre-Lie algebras. We elevate both
approaches to the level of bialgebras and prove that the Lie bialgebraa obtained using
these two approaches are the same. There is a correspondence between symmetric
solutions of the dendriform Yang-Baxter equation in dendriform algebras and
certain skew-symmetric solutions of the classical Yang-Baxter equation in the
Lie algebras induced from the dendriform algebras. The connections between
triangular bialgebra structures, $\mathcal{O}$-operators related to the solutions
of these Yang-Baxter equations are discussed in detail. During the discussion,
we also present a method for constructing infinite-dimensional antisymmetric
infinitesimal bialgebra by using the affineization of dendriform $\md$-bialgebras.
\end{abstract}

\keywords{Lie bialgebra, pre-Lie bialgebra, antisymmetric infinitesimal bialgebra,
dendriform $\md$-bialgebra, Yang-Baxter equation, $\mathcal{O}$-operator.}
\makeatletter
\@namedef{subjclassname@2020}{\textup{2020} Mathematics Subject Classification}
\makeatother
\subjclass[2020]{
17A30, 
17D25, 
17B38, 
17B62. 
}

\maketitle

\vspace{-10mm}
\tableofcontents 



\vspace{-10mm}

\section{Introduction}\label{sec:intr}

A bialgebra structure on a given algebra structure is obtained as a coalgebra
structure together which gives the same algebra structure on the dual space with
a set of compatibility conditions between the products and coproducts.
Two famous examples of bialgebras are Lie bialgebras \cite{Dri} and antisymmetric
infinitesimal (for short ASI) bialgebras \cite{Zhe,Bai}.  Lie bialgebras can be viewed
as a linearization of Poisson-Lie groups. It were first introduced by Drinfeld in
the context of the theory of Yang-Baxter equations and quantum groups. Later it has
found applications in many other areas of mathematics and mathematical physics such as
the theory of Hopf algebra deformations of universal enveloping algebras \cite{ES},
string topology and symplectic field theory \cite{CFL},
Goldman-Turaev theory of free loops in Riemann surfaces with boundaries \cite{Gol},
and so on. Infinitesimal bialgebras first appeared in the work of Joni and Rota to give
an algebraic framework for the calculus of divided differences \cite{JR}.
The antisymmetric version of infinitesimal bialgebras was introduced in \cite{Zhe}
by Zhelyabin by using the name associative $\md$-bialgebra as an associative analog
of Lie bialgebra. Later this structure was studied systematically by Bai under
the name antisymmetric infinitesimal bialgebra in \cite{Bai}.
The antisymmetric infinitesimal bialgebra can be characterized by the well-known
matched pair of associative algebras and a double construction of a Frobenius algebra.
In recent years, the bialgebra theories of various algebra structures have been
extensive developed, such as left-symmetric bialgebras (also called pre-Lie
bialgebras)\cite{Bai1}, Jordan bialgebras \cite{Zhe}, Novikov bialgebras \cite{HBG},
perm bialgebras \cite{Hou,LZB}, Jacobi-Jordan bialgebras \cite{BCHM}, dendriform
$\md$-bialgebras \cite{Bai}, and so on. The relationship between these bialgebra
structures, as well as the relationship between these bialgebras and conformal
bialgebras, have been discussed in \cite{HBG,LZB,LLB,HBG1,HBG2,CH}.
We will discuss in detail the relationship between dendriform $\md$-bialgebras,
pre-Lie bialgebras, antisymmetric infinitesimal bialgebras and Lie bialgebras,
and then provide how to use dendriform $\md$-bialgebras, pre-Lie bialgebras,
antisymmetric infinitesimal bialgebras to construct Lie bialgebras.

\smallskip\noindent
1.1. {\bf Dendriform $\md$-bialgebras, pre-Lie bialgebras, antisymmetric
infinitesimal bialgebras and Lie bialgebras.} For dendriform algebras, pre-Lie algebras,
associative algebras and Lie algebras, there exists a well-known commutative diagram:
$$
\xymatrix@C=3cm@R=0.5cm{
\txt{$(D, \prec, \succ)$ \\ {\tiny a dendriform algebra}}
\ar[d]_-{\mbox{\tiny $d_{1}\diamond d_{2}=d_{1}\succ d_{2}-d_{2}\prec d_{1}$}}
\ar[r]^-{\mbox{\tiny $d_{1}\ast d_{2}=d_{1}\prec d_{2}+d_{1}\succ d_{2}$}}
&\txt{$(D, \ast)$\\ {\tiny an associative algebra}}
\ar[d]^-{\mbox{\tiny $[d_{1}, d_{2}]=d_{1}\ast d_{2}-d_{2}\ast d_{1}$}} \\
\txt{$(D, \diamond)$ \\ {\tiny a pre-Lie algebra}}
\ar[r]^-{\mbox{\tiny $[d_{1}, d_{2}]=d_{1}\diamond d_{2}-d_{2}\diamond d_{1}$}}
& \txt{$(D, [-,-])$ \\
{\tiny a Lie algebra}}}
$$
This diagram is also commutative for the categories of dendriform algebras,
pre-Lie algebras, associative algebras and Lie algebras \cite{LFCG}.
But unfortunately, a pre-Lie bialgebra generally does not have a Lie bialgebra structure,
and a dendriform $\md$-bialgebra also does not have an antisymmetric infinitesimal
bialgebra structure. Please refer to \cite{Bai} for details. So, it is difficult
to elevate this commutative diagram to the level of bialgebras.
Note that there is also a Lie algebra structure on the tensor product of a pre-Lie
algebra and a perm algebra, and there is also an associative algebra structure on
the tensor product of a dendriform algebra and a perm algebra. Thus, we get a new
commutative diagram:
$$
\xymatrix@C=3cm@R=0.5cm{
\txt{$(D, \prec, \succ)$ \\ {\tiny a dendriform algebra}}
\ar[d]_-{\mbox{\tiny $d_{1}\diamond d_{2}=d_{1}\succ d_{2}-d_{2}\prec d_{1}$}}
\ar[r]^-{\mbox{\tiny $-\otimes B$}}
&\txt{$(D\otimes B, \ast)$\\ {\tiny an associative algebra}}
\ar[d]^-{\mbox{\tiny $[x_{1}, x_{2}]=x_{1}\ast x_{2}-x_{2}\ast x_{1}$}} \\
\txt{$(D, \diamond)$ \\ {\tiny a pre-Lie algebra}} \ar[r]^-{\mbox{\tiny $-\otimes B$}}
& \txt{$(D\otimes B, [-,-])$ \\
{\tiny a Lie algebra}}}
$$
where $(B, \cdot)$ is a perm algebra. A quadratic perm algebra is a perm algebra
equipped with an antisymmetric invariant bilinear form. Note that a quadratic perm
algebra $(B, \cdot, \omega)$ naturally induces a perm coalgebra structure on $B$.
We can give an ASI bialgebra structure on the tensor product of a dendriform
$\md$-bialgebra and a quadratic perm algebra (In fact, this is a direct result of
the affinization of dendriform $\md$-bialgebras.)
Thus, we can elevate the commutative diagram above to the level of bialgebras,
leading to the following conclusion:

\smallskip\noindent
{\bf Theorem I } ( Theorem \ref{thm:bialgebras} )
{\it Let $(D, \prec, \succ, \theta_{\prec}, \theta_{\succ})$ be a dendriform $\md$-bialgebra
and $(B, \cdot, \omega)$ be a quadratic perm algebra. Then we have the following
commutative diagram:
$$
\xymatrix@C=3cm@R=0.5cm{
\txt{$(D, \prec, \succ, \theta_{\prec}, \theta_{\succ})$ \\ {\tiny a dendriform bialgebra}}
\ar[d]_-{\mbox{\tiny $d_{1}\diamond d_{2}=d_{1}\succ d_{2}-d_{2}\prec d_{1}$}}
^-{\mbox{\tiny\rm\textcolor{red} {Pro.} ~\ref{pro:ASI-Liebia}}}
\ar[r]^-{\mbox{\tiny $-\otimes(B, \cdot, \omega)$}}_-{\mbox{\tiny\rm\textcolor{red}
{Cor.} ~\ref{cor:indassbia}}}
&\txt{$(D\otimes B, \ast, \Delta)$\\ {\tiny an ASI bialgebra}}
\ar[d]^-{\mbox{\tiny $[x_{1}, x_{2}]=x_{1}\ast x_{2}-x_{2}\ast x_{1}$}}
_-{\mbox{\tiny\rm\textcolor{red} {Pro.} ~\ref{pro:ASI-Liebia}}} \\
\txt{$(D, \diamond, \vartheta)$ \\ {\tiny a pre-Lie bialgebra}}
\ar[r]^-{\mbox{\tiny $-\otimes(B, \cdot, \omega)$}}_-{\mbox{\tiny\rm\textcolor{red}
{Thm.} ~\ref{thm:pre-lie}}}
& \txt{$(D\otimes B, [-,-], \delta)$ \\
{\tiny a Lie bialgebra}}}
$$
}

\noindent
1.2. {\bf Dendriform $\md$-bialgebras affinization construction of
antisymmetric infinitesimal bialgebras.} An important construction of
infinite-dimensional Lie algebras is a process of affinization.
Balinsky and Novikov showed that the affinization of a Novikov algebra naturally
defines a Lie algebra, a property that in fact characterizes the Novikov algebra \cite{BN}.
Recently, Hong, Bai and Guo have introduced the notion of quadratic $\bz$-graded right
Novikov algebra and shown that the tensor product of a finite-dimensional Novikov
bialgebra and a quadratic $\bz$-graded right Novikov algebra can be naturally endowed
with a completed Lie bialgebra. The converse of this result also holds when
the quadratic $\bz$-graded right Novikov algebra is some special right Novikov algebra,
giving the desired characterization of the Novikov bialgebra that its affinization
is a Lie bialgebra \cite{HBG}. The affinization of perm bialgebras by quadratic
$\bz$-graded pre-Lie algebras is another example \cite{LZB}. All these examples fit
into the interpretation of the Ginzburg-Kapranov operadic Koszul duality \cite{GK}.
Here, we give an affinization of dendriform $\md$-bialgebras which
is independent of operadic Koszul duality.

Roughly speaking, the affinization of a given algebra structure is to define
algebra structure on the vector space of Laurent polynomials and obtain another algebra
structure on the tensor product, which could resolve the given algebra structure in turn.
In this paper, we consider the affinization of dendriform $\md$-bialgebras.
We first show that the tensor product of a dendriform algebra and a special
perm algebra on the vector space of Laurent polynomials (which is constructed in
\cite{LZB}) being an associative algebra characterizes the dendriform algebra.
That is, we obtain associative algebras via affinization of dendriform algebras
by perm algebras, which gives an affinization characterization of dendriform algebras.
Second, we consider the dual version of the dendriform algebra affinization, for
dendriform coalgebras. We introduce the notion of completed coassociative coalgebras
and prove that there is a natural completed coassociative coalgebra structure on the
tensor product of a dendriform coalgebra and the completed perm coalgebra of Laurent
polynomials, a property that in fact characterizes the dendriform coalgebra.
Finally, we use the quadratic $\bz$-graded perm algebra lift the affinization
characterizations of dendriform algebras and dendriform coalgebras to the level
of bialgebras.

\smallskip\noindent
{\bf Theorem II } ( Theorem \ref{thm:den-perm-ass} )
{\it Let $(D, \prec, \succ, \theta_{\prec}, \theta_{\succ})$ be a finite-dimensional
dendriform $\md$-bialgebra, $(B=\oplus_{i\in\bz}B_{i}, \cdot, \varpi)$ be a quadratic
$\bz$-graded perm algebra and $(D\otimes B, \ast)$ be the induced $\bz$-graded
associative algebra from $(D, \prec, \succ)$ by $(B, \cdot)$.
Define a linear map $\Delta: D\otimes B\rightarrow(D\otimes B)\otimes(D\otimes B)$ by
\begin{align*}
\Delta(d\otimes b)&=\theta_{\succ}(d)\bullet\nu_{\varpi}(b)
+\theta_{\prec}(d)\bullet\,\hat{\tau}(\nu_{\varpi}(b))\\
&:=\sum_{[d]}\sum_{i,j,\alpha}(d_{[1]}\otimes b_{1,i,\alpha})
\otimes(d_{[2]}\otimes b_{2,j,\alpha})+\sum_{(d)}\sum_{i,j,\alpha}(d_{(1)}\otimes
b_{2,j,\alpha})\otimes(d_{(2)}\otimes b_{1,i,\alpha}),
\end{align*}
for any $d\in D$ and $b\in B$, where $\theta_{\prec}(d)=\sum_{(d)}d_{(1)}
\otimes d_{(2)}$, $\theta_{\succ}(d)=\sum_{[d]}d_{[1]}\otimes d_{[2]}$ in the
Sweedler notation and $\nu_{\varpi}(b)=\sum_{i,j,\alpha}b_{1,i,\alpha}\otimes
b_{2,j,\alpha}$. Then $(D\otimes B, \ast, \Delta)$ is a completed ASI bialgebra.

Moreover, if $(B=\oplus_{i\in\bz}B_{i}, \cdot, \varpi)$ is the quadratic $\bz$-graded
perm algebra given in Example \ref{ex:qu-perm}, then $(D\otimes B, \ast, \Delta)$ is
a completed ASI bialgebra if and only if $(D, \prec, \succ, \theta_{\prec},
\theta_{\succ})$ is a dendriform $\md$-bialgebra.}

\smallskip
In particular, we also consider the finite-dimensional ASI bialgebra structure on
the tensor product of a dendriform $\md$-bialgebra and a quadratic perm algebra.

\smallskip\noindent
1.3. {\bf Solution of Yang-Baxter equation, triangular bialgebra structure and
$\mathcal{O}$-operator.}
The classical Yang-Baxter equation (for short $\CYBE$) arose from the study of
inverse scattering theory in the 1980s and was recognized as the semi-classical limit
of the quantum Yang-Baxter equation \cite{Yang,Bax}. The study of the $\CYBE$
in a Lie algebra has substantial ramifications and applications in the areas
of symplectic geometry, quantum groups, integrable systems, and quantum field theory
\cite{BD,Sto}. Semenov-Tian-Shansky discovered that, on quadratic Lie algebras,
the $\CYBE$ in tensor form can be transformed into a Rota-Baxter operator of weight
$0$ \cite{Sem}. Subsequently, Kupershmidt found that the $\CYBE$ in tensor form on
general Lie algebras can also be converted into an $\mathcal{O}$-operator associated
to the coadjoint representation \cite{Kup}. Nowadays, the $\mathcal{O}$-operator is
regarded as an operator form of a solution of classical Yang-Baxter equation.

In the Lie bialgebra theory, quasi-triangular Lie bialgebras especially triangular Lie
bialgebras play important roles in mathematical physics \cite{Sem,BGN,RS}.
Factorizable Lie bialgebras constitute a special subclass of quasi-triangular
Lie bialgebras that bridge classical $r$-matrices with certain factorization problems,
which is finded diverse applications in integrable systems \cite{Dri1,LS}.
The solutions of the $\CYBE$ in a Lie algebra $(\g, [-,-])$ is closely related to
these Lie bialgebra structures on $(\g, [-,-])$: a skew-symmetric solution of the
$\CYBE$ in $(\g, [-,-])$ gives rise to a triangular Lie bialgebra structure on
$(\g, [-,-])$; a solution of the $\CYBE$ in $(\g, [-,-])$ whose skew-symmetric part
is invariant gives rise to a quasi-triangular Lie bialgebra structure on $(\g, [-,-])$.
Recently, by introducing the classical Yang-Baxter equation in various algebra structures
and considering some special solutions of this equation, various quasi-triangular
bialgebra structures have been studied, such as quasi-triangular ASI bialgebras \cite{SW},
quasi-triangular pre-Lie bialgebras \cite{WBLS}, quasi-triangular Leibniz bialgebras
\cite{BLST}, quasi-triangular Novikov bialgebras \cite{CH}, and so on.

In this paper, we propose two approaches to construct a solution of
the $\CYBE$ in the induced Lie algebra $(D\otimes B, [-,-])$ (by a perm algebra
$(B, \cdot)$) starting from a solution of the dendriform Yang-Baxter equation in a
dendriform algebra $(D, \prec, \succ)$, and the solutions of the $\CYBE$ obtained
by these two approaches are the same. Furthermore, by using the relationship between
solutions of the Yang-Baxter equation and triangular bialgebra structures,
$\mathcal{O}$-operators, we apply this result to the level of triangular bialgebra
structures and $\mathcal{O}$-operators, and obtain the following commutative diagram:
{\small
\begin{displaymath}
\xymatrix@R=0.7cm@C=-0.3cm{
&\txt{{\small $(D, \prec, \succ, \theta_{\prec,r}, \theta_{\succ,r})$ }\\
{\tiny a triangular dendriform $\md$-bialgebra}}
\ar@{->}[rr]|-{\txt{\tiny\textcolor{red}{Pro.}~\ref{pro:spre-ASIbia}}}
\ar@{->}[ld]|-{\txt{\tiny\textcolor{red}{Pro.}~\ref{pro:qtD-qtpreLie}}}
\ar@{<.}[dd]|-(0.8){\txt{\tiny\textcolor{red}{Pro.}~\ref{pro:tridend}}}&
&\txt{{\small $(D\otimes B, \ast, \Delta_{\widehat{r}})$} \\
{\tiny a triangular ASI bialgebra}}
\ar@{->}[ld]|-{\txt{\tiny\textcolor{red}{Pro.}~\ref{pro:qtAss-qtLie}}}
\ar@{<-}[dd]|-{\txt{\tiny\textcolor{red}{Pro.}~\ref{pro:qtass-bia}}}&\\
\txt{{\small $(D, \diamond, \vartheta_{r})$} \\ {\tiny a triangular pre-Lie bialgebra}}
\ar@{<-}[dd]|-{\txt{\tiny\textcolor{red}{Pro.}~\ref{pro:spec-plbia}}}
\ar@{->}[rr]|-(0.75){\txt{\tiny\textcolor{red}{Thm.}~\ref{thm:indu-sLiebia}}}&
&\txt{{\small $(D\otimes B, [-,-], \delta_{\widehat{r}})$}\\
{\tiny a triangular Lie bialgebra}}
\ar@{<-}[dd]|-(0.7){\txt{\tiny\textcolor{red}{Pro.}~\ref{pro:splie-bia}}} \\
&\txt{{\small\bf $r$}\\ {\tiny\bf a symmetric solution} \\
{\tiny\bf of the $\DYBE$ in $(D, \prec, \succ)$}}
\ar@{.>}[dd]|-(0.75){\txt{\tiny\textcolor{red}{Pro.}~\ref{pro:o-D}}}
\ar@{.>}[rr]|-(0.2){\rm\textcolor{red}{Pro.}~\ref{pro:DYBE-AYBE}}&
&\txt{{\small\bf $\widehat{r}$} \\  {\tiny\bf a skew-symmetric solution} \\
{\tiny\bf of the $\AYBE$ in $(D\otimes B, \ast)$}}
\ar@{->}[dd]|-{\txt{\tiny\textcolor{red}{Pro.}~\ref{pro:o-ass}}}\\
\txt{{\small\bf $r$}\\ {\tiny\bf a symmetric solution} \\
{\tiny\bf of the $\PLYBE$ in $(D, \diamond)$}}
\ar@{->}[dd]|-{\txt{\tiny\textcolor{red}{Pro.}~\ref{pro:o-PL}}}
\ar@{<.}[ru]|-{\txt{\tiny\textcolor{red}{Pro.}~\ref{pro:D-PL-YBE}}}
\ar@{->}[rr]|-(0.2){\txt{\tiny\textcolor{red}{Pro.}~\ref{pro:PLYBE-CYBE}}}&
&\txt{{\small\bf $\widehat{r}$}\\ {\tiny\bf a skew-symmetric solution} \\
{\tiny\bf of the $\CYBE$ in $(D\otimes B, [-,-])$}}
\ar@{<-}[ru]|-{\txt{\tiny\textcolor{red}{Pro.}~\ref{pro:ass-Lie-YBE}}}
\ar@{->}[dd]|-(0.75){\txt{\tiny\textcolor{red}{Pro.}~\ref{pro:o-lie}}}\\
&\txt{{\small $r^{\sharp}$}\\{\tiny an $\mathcal{O}$-operator of $(D, \prec, \succ)$} \\
{\tiny associated to $(D^{\ast}, \fr_{\succ}^{\ast}+\fr_{\prec}^{\ast}, -\fl_{\prec}^{\ast},
-\fr_{\succ}^{\ast}, \fl_{\prec}^{\ast}+\fl_{\succ}^{\ast})$}}
\ar@{.>}[ld]|-{\txt{\tiny\textcolor{red}{Pro.}~\ref{pro:D-ind-ooper}}} &
&\txt{{\small $\widehat{r}^{\sharp}=r^{\sharp}\otimes\kappa^{\sharp}$}\\
{\tiny an $\mathcal{O}$-operator of $(D\otimes B, \ast)$} \\
{\tiny associated to $((D\otimes B)^{\ast}, \fr_{D\otimes B}^{\ast},
\fl_{D\otimes B}^{\ast})$}}
\ar@{<.}[ll]|-(0.75){\txt{\tiny\textcolor{red}{$-\otimes\kappa^{\sharp}$}}}\\
\txt{{\small $r^{\sharp}$}\\{\tiny an $\mathcal{O}$-operator of $(D, \diamond)$} \\
{\tiny associated to $(D^{\ast}, \fr_{D}^{\ast}-\fl_{D}^{\ast}, \fr_{D}^{\ast})$}}&
&\txt{{\small $\widehat{r}^{\sharp}=r^{\sharp}\otimes\kappa^{\sharp}$}\\
{\tiny an $\mathcal{O}$-operator of $(D\otimes B, [-,-])$} \\
{\tiny associated to $((D\otimes B)^{\ast}, -\ad_{D\otimes B}^{\ast})$}}
\ar@{<-}[ru]|-{\txt{\tiny\textcolor{red}{Pro.}~\ref{pro:Ass-ind-ooper}}}
\ar@{<-}[ll]|-{\txt{\tiny\textcolor{red}{$-\otimes\kappa^{\sharp}$}}}&}
\end{displaymath}
}

This paper is organized as follows. In Section \ref{sec:Prelim} we recall the notions
of dendriform algebras, pre-Lie algebras and perm algebras, together with the bimodules
over dendriform algebras, pre-Lie algebras. We show in Proposition \ref{pro:comm-diag}
that there is a commutative diagram of dendriform algebras, the induced pre-Lie algebras,
the induced associative algebras and Lie algebras (by perm algebras).
In Section \ref{sec:Lie-preli} we analyze the Lie bialgebra structure on the
tensor product of a pre-Lie bialgebra and a quadratic perm algebra, and provide in
Theorem \ref{thm:indu-sLiebia} that the Lie bialgebra structure on this tensor product
is quasi-triangular (resp. triangular, factorizable) if the original pre-Lie bialgebra
is quasi-triangular (resp. triangular, factorizable). In this proof process,
we also provide that a symmetric solution (resp. a solution whose skew-symmetric part
is invariant) of the pre-Lie Yang-Baxter equation in a pre-Lie algebra
$(A, \diamond)$ can induce a skew-symmetric solution (resp. a solution whose symmetric
part is invariant) of the $\CYBE$ in the tensor product Lie algebra $(A\otimes B, [-,-])$.
In Section \ref{sec:dend-ASI} we introduce the notion of completed ASI bialgebras
and show that there is a natural completed ASI bialgebra structure on the tensor product
of a finite-dimensional dendriform $\md$-bialgebra and a quadratic $\bz$-graded
perm algebra. In the special case when the quadratic $\bz$-graded perm algebra is
on the space of Laurent polynomials, we obtain a characterization of the dendriform
$\md$-bialgebra by the affinization. In the special case when the quadratic
$\bz$-graded perm algebra is a finite-dimensional quadratic perm algebra,
we provide that a symmetric solution of the dendriform Yang-Baxter equation in
a dendriform algebra $(D, \prec, \succ)$ can induce a skew-symmetric solution of
the associative Yang-Baxter equation in the tensor product associative
algebra $(D\otimes B, \ast)$.
In Section \ref{sec:dend-lie} we give the proof of Theorem I (Theorem
\ref{thm:bialgebras}), and show that a skew-symmetric solution (resp. a solution whose
symmetric part is invariant) of the associative Yang-Baxter equation in an associative
algebra is also a skew-symmetric solution (resp. a solution whose
symmetric part is invariant) of the induced Lie algerba (by the commutator)
and a symmetric solution of the dendriform Yang-Baxter equation in
a dendriform algebra is also a symmetric solution of the dendriform
Yang-Baxter equation in the induced pre-Lie algebra. In addition to the
conclusions of the previous sections, we have obtained the three-dimensional
commutative diagram above.

Throughout this paper, we fix $\Bbbk$ as a field of characteristic zero.
All the vector spaces, algebras are over $\Bbbk$ and are finite-dimensional
unless otherwise specified, and all tensor products are also over $\Bbbk$.
For any vector space $V$, we denote $V^{\ast}$ the dual space of $V$. We denote the
identity map by $\id$. For vector spaces $A$ and $V$, we fix the following notations:
\begin{enumerate}
\item[$(i)$] Let $f: V\rightarrow V$ be a linear map. Define a linear map $f^{\ast}:
      V^{\ast}\rightarrow V^{\ast}$ by
      $\langle f^{\ast}(\xi),\; v\rangle=\langle\xi,\; f(v)\rangle$,
      for any $v\in V$ and $\xi\in V^{\ast}$.
\item[$(ii)$] Let $\mu: A\rightarrow\End_{\Bbbk}(V)$ be a linear map. Define a linear
      map $\mu^{\ast}: A\rightarrow\End_{\Bbbk}(V^{\ast})$ by
      $\langle\mu^{\ast}(a)(\xi),\; v\rangle=\langle\xi,\; \mu(a)(v)\rangle$,
      for any $a\in A$, $v\in V$ and $\xi\in V^{\ast}$.
\item[$(iii)$] For any $r\in A\otimes A$, we define a linear map
      $r^{\sharp}: A^{\ast}\rightarrow A$ by
      $\langle\xi_{2},\; r^{\sharp}(\xi_{1})\rangle=\langle\xi_{1}\otimes\xi_{2},\;
      r\rangle$, for any $\xi_{1}, \xi_{2}\in A^{\ast}$.
\end{enumerate}

\smallskip
\section{Preliminaries on pre-Lie algebras and dendriform algebras}\label{sec:Prelim}
In this section, we recall the notions of pre-Lie algebras, dendriform algebras,
bimodules over pre-Lie algebras and dendriform algebras, the $\mathcal{O}$-operator of
pre-Lie algebras and dendriform algebras. Starting from a dendriform
algebra, using the tensor product with a perm algebra, we present two methods for
obtaining a Lie algebra, and the Lie algebras obtained by these two methods are
consistent. Dendriform algebras were introduced by Loday.
First, we recall the definition of dendriform algebras.

\begin{defi}\label{def:Dalg}
A {\bf dendriform algebra} $(D, \prec, \succ)$ is a vector space $D$ with two
bilinear operators $\prec, \succ: D\otimes D\rightarrow D$, such that
\begin{align*}
&(d_{1}\prec d_{2})\prec d_{3}=d_{1}\prec(d_{2}\prec d_{3})+d_{1}\prec(d_{2}\succ d_{3}),\\
&\qquad\quad (d_{1}\succ d_{2})\prec d_{3}=d_{1}\succ(d_{2}\prec d_{3}),\\
&d_{1}\succ(d_{2}\succ d_{3})=(d_{1}\prec d_{2})\succ d_{3}+(d_{1}\succ d_{2})\succ d_{3},
\end{align*}
for any $d_{1}, d_{2}, d_{3}\in D$.
\end{defi}

Recall that an {\bf associative Rota-Baxter algebra} is an associative algebra $(A, \ast)$
endowed with a linear operator $R: A\rightarrow A$ subject to the following relation:
$$
R(a_{1})\ast R(a_{2})=R\big(R(a_{1})\ast a_{2}+a_{1}\ast R(a_{2})\big),
$$
for any $a_{1}, a_{2}\in A$.

\begin{ex}\label{ex:D-alg}
$(i)$ Let $(A, \ast, R)$ be an associative Rota-Baxter algebra and define operators
$\prec, \succ: A\otimes A\rightarrow A$ by $a_{1}\prec a_{2}:=a_{1}\ast R(a_{2})$ and
$a_{1}\succ a_{2}:=R(a_{1})\ast a_{2}$ for any $a_{1}, a_{2}\in A$.
Then $(A, \prec, \succ)$ is a dendriform algebra.

$(ii)$ Let $(A, \ast)$ be an associative algebra and $x\in A$ be an element such that
$x^{2}=0$. Define $a_{1}\prec a_{2}:=a_{1}\ast(x\ast a_{2})$ and
$a_{1}\succ a_{2}:=a_{1}\ast(a_{2}\ast x)$ for any $a_{1}, a_{2}\in A$.
Then $(A, \prec, \succ)$ is a dendriform algebra.

$(iii)$ Recently, Rakhimov and Bekbaev provided a classification of 2-dimensional
dendriform algebras in \cite{RB}. Let vector space $D=\Bbbk\{e_{1}, e_{2}\}$.
Define the products by $e_{1}\succ e_{1}=e_{1}$, $e_{2}\prec e_{1}=e_{2}$ and
others are all zero. Then $(D, \prec, \succ)$ is a 2-dimensional dendriform algebra.
\end{ex}

\begin{defi}\label{def:mod-Dalg}
A {\bf bimodule} over a dendriform algebra $(D, \prec, \succ)$ is a quintuple
$(V, \kl_{\prec}, \kr_{\prec}, \kl_{\succ}, \kr_{\succ})$, where $V$ is a vector
space and $\kl_{\prec}, \kr_{\prec}, \kl_{\succ}, \kr_{\succ}: A\rightarrow\End_{\Bbbk}(V)$
are linear maps satisfying the following equations:
\begin{align*}
&\kl_{\prec}(d_{1}\prec d_{2})=\kl_{\prec}(d_{1})\kl_{\prec}(d_{2})
+\kl_{\prec}(d_{1})\kl_{\succ}(d_{2}),&
&\kl_{\prec}(d_{1}\prec d_{2})=\kl_{\succ}(d_{1})\kl_{\prec}(d_{2}),\\
&\kr_{\prec}(d_{1})\kl_{\prec}(d_{2})=\kl_{\prec}(d_{2})\kr_{\prec}(d_{1})
+\kl_{\prec}(d_{2})\kr_{\succ}(d_{1}),&
&\kr_{\prec}(d_{1})\kl_{\succ}(d_{2})=\kl_{\succ}(d_{2})\kr_{\prec}(d_{1}),\\
&\kr_{\prec}(d_{1})\kr_{\prec}(d_{2})=\kr_{\prec}(d_{2}\prec d_{1}+d_{2}\succ d_{1}),&
&\kr_{\prec}(d_{1})\kr_{\succ}(d_{2})=\kr_{\succ}(d_{2}\prec d_{1}),\\
&\kr_{\succ}(d_{1})\kl_{\prec}(d_{2})+\kr_{\succ}(d_{1})\kl_{\succ}(d_{2})
=\kl_{\succ}(d_{2})\kr_{\succ}(d_{1}),&
&\kl_{\succ}(d_{1}\prec d_{2}+d_{1}\succ d_{2})=\kl_{\succ}(d_{1})\kl_{\succ}(d_{2}),\\
&\kr_{\succ}(d_{1})\kr_{\prec}(d_{2})+\kr_{\succ}(d_{1})\kr_{\succ}(d_{2})
=\kr_{\succ}(d_{2}\succ d_{1}),
\end{align*}
for any $d_{1}, d_{2}\in D$
\end{defi}

Let $(D, \prec, \succ)$ be a dendriform algebra. Define linear maps
$\fl_{\prec}, \fr_{\prec}, \fl_{\succ}, \fr_{\succ}: A\rightarrow\End_{\Bbbk}(A)$ by
$\fl_{\prec}(d_{1})(d_{2})=d_{1}\prec d_{2}$, $\fr_{\prec}(d_{1})(d_{2})=d_{2}\prec d_{1}$,
$\fl_{\succ}(d_{1})(d_{2})=d_{1}\succ d_{2}$ and $\fl_{\succ}(d_{1})(d_{2})=
d_{2}\succ d_{1}$ respectively, for any $d_{1}, d_{2}\in D$. Then $(D, \fl_{\prec},
\fr_{\prec}, \fl_{\succ}, \fr_{\succ})$ is a bimodule over $(D, \prec, \succ)$,
which is called the {\bf regular bimodule} over $(D, \prec, \succ)$. Let $(A, \ast)$
be an associative algebra. Recall that the regular bimodule $(A, \fl_{A},
\fr_{A})$ of $(A, \ast)$ is given by $\fl_{A}(a_{1})(a_{2})=a_{1}\ast a_{2}$ and
$\fr_{A}(a_{1})(a_{2})=a_{2}\ast a_{1}$ for any $a_{1}, a_{2}\in D$.
It is easy to see that the regular bimodule $(D, \fl_{D}, \fr_{D})$ of the
sub-adjacent associative algebra $(D, \ast)$ of $(D, \prec, \succ)$ is also
induced by the regular bimodule $(D, \fl_{\prec}, \fr_{\prec}, \fl_{\succ},
\fr_{\succ})$ of $(D, \prec, \succ)$, i.e., $\fl_{D}=\fl_{\prec}+\fl_{\succ}$
and $\fr_{D}=\fr_{\prec}+\fr_{\succ}$.

Dendriform algebras are not only closely related to the associative algebras,
but also closely related to the pre-Lie algebras.

\begin{defi}\label{def:pre-Liealg}
A {\bf pre-Lie algebra} is a pair $(A, \diamond)$, where $A$ is a vector space and
$\diamond: A\otimes A\rightarrow A$ is a bilinear operator such that,
$$
a_{1}\diamond(a_{2}\diamond a_{3})-(a_{1}\diamond a_{2})\diamond a_{3}
=a_{2}\diamond(a_{1}\diamond a_{3})-(a_{2}\diamond a_{1})\diamond a_{3},
$$
for any $a_{1}, a_{2}, a_{3}\in A$.
\end{defi}

In \cite{Agu}, Aguiar pointed out that every dendriform algebra has a pre-Lie
algebra structure.

\begin{pro}[\cite{Agu,Bai}]\label{pro:dend-pre}
Let $(D, \prec, \succ)$ be a dendriform algebra.
Define a bilinear operator $\diamond: D\otimes D\rightarrow D$ by
$$
d_{1}\diamond d_{2}=d_{1}\succ d_{2}-d_{2}\prec d_{1},
$$
for any $d_{1}, d_{2}\in D$. Then $(D, \diamond)$ is a pre-Lie algebra,
called the {\bf induced pre-Lie algebra} from $(D, \prec, \succ)$.
\end{pro}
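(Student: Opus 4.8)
The plan is to verify the single pre-Lie identity
\[
d_{1}\diamond(d_{2}\diamond d_{3})-(d_{1}\diamond d_{2})\diamond d_{3}
=d_{2}\diamond(d_{1}\diamond d_{3})-(d_{2}\diamond d_{1})\diamond d_{3}
\]
by expanding everything in terms of $\prec$ and $\succ$ using the definition $d_i\diamond d_j=d_i\succ d_j-d_j\prec d_i$, and then collecting terms according to the three dendriform axioms. The computation is a bounded, mechanical exercise; the real content is bookkeeping, so I would organize it to make the cancellations transparent.

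First I would expand the left-associated terms. Substituting the definition of $\diamond$ twice, $d_{1}\diamond(d_{2}\diamond d_{3})$ becomes
\[
d_{1}\succ(d_{2}\succ d_{3})-d_{1}\succ(d_{3}\prec d_{2})
-(d_{2}\succ d_{3})\prec d_{1}+(d_{3}\prec d_{2})\prec d_{1},
\]
and similarly I would write out $(d_{1}\diamond d_{2})\diamond d_{3}$, which produces four terms involving $(d_1\succ d_2)$ and $(d_2\prec d_1)$ paired against $d_3$. I would then form their difference, the entire left-hand side. The key observation driving the proof is that this whole expression should turn out to be \emph{symmetric} in $d_1$ and $d_2$: if I can show the left-hand side is invariant under swapping $d_1\leftrightarrow d_2$, then it equals the right-hand side automatically, since the right-hand side is obtained from the left by exactly that swap. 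So rather than expanding both sides independently, I would expand only the left-hand side and check its $d_1\leftrightarrow d_2$ symmetry.

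The mechanism for this symmetry is the three dendriform axioms, which I would apply to rewrite the three associativity-type pieces. Concretely, the term $d_1\succ(d_2\succ d_3)$ should be replaced using the third axiom $d_{1}\succ(d_{2}\succ d_{3})=(d_{1}\prec d_{2})\succ d_{3}+(d_{1}\succ d_{2})\succ d_{3}$; the term $(d_3\prec d_2)\prec d_1$ (and its partner) should be handled by the first axiom $(d_{1}\prec d_{2})\prec d_{3}=d_{1}\prec(d_{2}\prec d_{3})+d_{1}\prec(d_{2}\succ d_{3})$; and the mixed terms like $d_1\succ(d_3\prec d_2)$ and $(d_2\succ d_3)\prec d_1$ should be reorganized via the second axiom $(d_{1}\succ d_{2})\prec d_{3}=d_{1}\succ(d_{2}\prec d_{3})$. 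After these substitutions, the terms that are manifestly symmetric in $d_1,d_2$ (for instance $(d_1\prec d_2)\succ d_3$ together with $(d_2\prec d_1)\succ d_3$, and the purely $\succ$-chain $(d_1\succ d_2)\succ d_3$ versus $(d_2\succ d_1)\succ d_3$) survive, while the genuinely asymmetric pieces cancel against each other.

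The main obstacle is purely organizational: each $\diamond$ hides four dendriform terms, so the left-hand side alone expands into roughly eight summands before the axioms are applied, and it is easy to mismatch a $\prec$/$\succ$ label or a $d_1\leftrightarrow d_2$ ordering. To control this I would group terms by their ``shape'' (three $\succ$'s, two $\succ$'s and one $\prec$, etc.) and match each group against the appropriate axiom, checking symmetry group by group rather than all at once. No nontrivial idea beyond the dendriform axioms is needed; the proof is complete once every surviving term is exhibited as symmetric under $d_1\leftrightarrow d_2$.
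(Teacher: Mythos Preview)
Your proposal is correct and is exactly the standard direct verification of the pre-Lie identity via the three dendriform axioms; the paper itself does not supply a proof of this proposition but simply cites it from \cite{Agu,Bai}, so there is nothing further to compare against.
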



Recall the notion of bimodule over a pre-Lie algebra.

\begin{defi}\label{def:pre-Liemod}
Let $(A, \diamond)$ be a pre-Lie algebra. A {\bf bimodule} over $(A, \diamond)$ is a
triple $(V, \kl, \kr)$, such that $V$ is a vector space, and $\kl, \kr: A\rightarrow
\End_{\Bbbk}(V)$ are linear maps satisfying
\begin{align*}
\kl(a_{1})\kl(a_{2})-\kl(a_{1}\diamond a_{2})
&=\kl(a_{2})\kl(a_{1})-\kl(a_{2}\diamond a_{1}),\\
\kl(a_{1})\kr(a_{2})-\kr(a_{2})\kl(a_{1})
&=\kr(a_{1}\diamond a_{2})-\kr(a_{2})\kr(a_{1}),
\end{align*}
for any $a_{1}, a_{2}\in A$.
\end{defi}

Let $(A, \diamond)$ be a pre-Lie algebra. Define linear maps $\fl_{A}, \fr_{A}:
A\rightarrow\End_{\Bbbk}(A)$ by $\fl_{A}(a_{1})(a_{2})=a_{1}\diamond a_{2}$ and
$\fr_{A}(a_{1})(a_{2})=a_{2}\diamond a_{1}$ respectively, for any $a_{1}, a_{2}\in A$.
Then $(A, \fl_{A}, \fr_{A})$ is a bimodule over $(A, \diamond)$,
which is called the {\bf regular bimodule} over $(A, \diamond)$.

Given a pre-Lie algebra $(A, \diamond)$ one may define a bracket on $A$ by
$[a_{1}, a_{2}]=a_{1}\diamond a_{2}-a_{2}\diamond a_{1}$ for any $a_{1}, a_{2}\in A$.
Then $(A, [-,-])$ is a Lie algebra, which is called the sub-adjacent
Lie algebra of $(A, \diamond)$. Moreover, since the properad of pre-Lie algebras
and the the properad of perm algebras are Koszul dual, we can also get a Lie algebra
by the tensor product of a pre-Lie algebra and a perm algebra.

\begin{defi}\label{def:permalg}
A {\bf perm algebra} is a pair $(B, \cdot)$, where $B$ is a vector space and
$\cdot: B\otimes B\rightarrow B$ is a bilinear operator such that,
$$
b_{1}(b_{2}b_{3})=(b_{1}b_{2})b_{3}=(b_{2}b_{1})b_{3},
$$
for any $b_{1}, b_{2}, b_{3}\in B$, where $b_{1}b_{2}:=b_{1}\cdot b_{2}$.
\end{defi}

\begin{pro}[\cite{Val,LZB}]\label{pro:perm-per}
Let $(A, \diamond)$ be a pre-Lie algebra and $(B, \cdot)$ be a perm algebra.
Define a bracket $[-,-]:(A\otimes B)\otimes(A\otimes B)\rightarrow A\otimes B$ by
$$
[a_{1}\otimes b_{1},\; a_{2}\otimes b_{2}]=(a_{1}\diamond a_{2})\otimes(b_{1}b_{2})
-(a_{2}\diamond a_{1})\otimes(b_{2}b_{1}),
$$
for any $a_{1}, a_{2}\in A$ and $b_{1}, b_{2}\in B$. Then $(A\otimes B, [-,-])$
is a Lie algebra, called {\bf the induced Lie algebra} from $(A, \diamond)$ by $(B, \cdot)$.
\end{pro}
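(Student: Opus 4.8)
The plan is to verify the two Lie algebra axioms for $[-,-]$ directly, reducing everything to simple tensors $a\otimes b$ by bilinearity. Antisymmetry is immediate: interchanging the roles of $a_{1}\otimes b_{1}$ and $a_{2}\otimes b_{2}$ in the defining formula exactly negates it, so $[x,y]=-[y,x]$ for all $x,y\in A\otimes B$. The real content is therefore the Jacobi identity, which I would check on simple tensors $x=a_{1}\otimes b_{1}$, $y=a_{2}\otimes b_{2}$, $z=a_{3}\otimes b_{3}$. First I would record the general expansion of a double bracket,
$$[[a_{i}\otimes b_{i}, a_{j}\otimes b_{j}], a_{k}\otimes b_{k}] = ((a_{i}\diamond a_{j})\diamond a_{k})\otimes((b_{i}b_{j})b_{k}) - (a_{k}\diamond(a_{i}\diamond a_{j}))\otimes(b_{k}(b_{i}b_{j})) - ((a_{j}\diamond a_{i})\diamond a_{k})\otimes((b_{j}b_{i})b_{k}) + (a_{k}\diamond(a_{j}\diamond a_{i}))\otimes(b_{k}(b_{j}b_{i})),$$
and then specialize to the three cyclic triples $(i,j,k)=(1,2,3),(2,3,1),(3,1,2)$.

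The key simplification comes from the perm axiom $b_{1}(b_{2}b_{3})=(b_{1}b_{2})b_{3}=(b_{2}b_{1})b_{3}$, which says that $(B,\cdot)$ is simultaneously associative (so parentheses may be dropped in left-normed products) and \emph{left-commutative} (the first two factors of a left-normed product may be swapped). Consequently every triple product of $b_{1},b_{2},b_{3}$ collapses to one of exactly three canonical monomials, indexed by which factor occupies the last slot:
$$m_{3}=b_{1}b_{2}b_{3}=b_{2}b_{1}b_{3},\qquad m_{1}=b_{2}b_{3}b_{1}=b_{3}b_{2}b_{1},\qquad m_{2}=b_{3}b_{1}b_{2}=b_{1}b_{3}b_{2}.$$
Expanding the three cyclic double brackets produces twelve terms in total, and after applying these perm reductions the twelve terms regroup into three bunches, one attached to each $m_{s}$.

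It then remains to show that the $A$-coefficient of each $m_{s}$ vanishes, and this is exactly where the pre-Lie axiom enters. For instance, collecting the four contributions to the coefficient of $m_{3}$ from the three cyclic terms gives
$$(a_{1}\diamond a_{2})\diamond a_{3}-(a_{2}\diamond a_{1})\diamond a_{3}-a_{1}\diamond(a_{2}\diamond a_{3})+a_{2}\diamond(a_{1}\diamond a_{3}),$$
which is precisely the (negated) difference of the two sides of the defining pre-Lie identity of Definition \ref{def:pre-Liealg} and hence is zero. The coefficients of $m_{1}$ and $m_{2}$ vanish by the same argument applied to the cyclically shifted triples $(a_{2},a_{3},a_{1})$ and $(a_{3},a_{1},a_{2})$. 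Thus $J:=[[x,y],z]+[[y,z],x]+[[z,x],y]=0$, which completes the verification.

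I expect the only delicate point to be the bookkeeping of the perm reductions: one must keep straight which triple products merge — those sharing a last factor, such as $(b_{1}b_{2})b_{3}$ and $(b_{2}b_{1})b_{3}$ — and which stay distinct — those with different last factors, such as $b_{k}(b_{i}b_{j})$ and $b_{k}(b_{j}b_{i})$, whose last factors are $b_{j}$ and $b_{i}$ respectively. Getting this grouping right is what lets the twelve terms fall into the three groups of four; once that is done, each group closes by a single application of the pre-Lie axiom, and no further perm identity beyond associativity and left-commutativity is needed.
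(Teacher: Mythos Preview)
Your proof is correct. The paper does not actually prove this proposition; it is stated with a citation to \cite{Val,LZB} as a known result, so there is no ``paper's own proof'' to compare against. Your direct verification via the perm reductions $b_ib_jb_k$ depending only on the last factor, followed by three applications of the pre-Lie identity on the cyclically shifted triples, is the standard argument and is carried out accurately.
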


Similarly,  we can get an associative algebra
by the tensor product of a dendriform algebra and a perm algebra.

\begin{pro}\label{pro:perm-dend}
Let $(D, \prec, \succ)$ be a dendriform algebra and $(B, \cdot)$ be a perm algebra.
Define a bilinear operator $\ast:(D\otimes B)\otimes(D\otimes B)\rightarrow D\otimes B$ by
$$
(d_{1}\otimes b_{1})\ast(d_{2}\otimes b_{2})=(d_{1}\succ d_{2})\otimes(b_{1}b_{2})
+(d_{1}\prec d_{2})\otimes(b_{2}b_{1}),
$$
for any $d_{1}, d_{2}\in D$ and $b_{1}, b_{2}\in B$. Then $(D\otimes B, \ast)$
is an associative algebra, called {\bf the induced associative algebra} from
$(D, \prec, \succ)$ by $(B, \cdot)$.
\end{pro}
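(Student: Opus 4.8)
The plan is to verify the associativity of $\ast$ by direct computation. Since $\ast$ is bilinear, it suffices to check the associativity identity on simple tensors $d_{i}\otimes b_{i}$. First I would expand each of $\big((d_{1}\otimes b_{1})\ast(d_{2}\otimes b_{2})\big)\ast(d_{3}\otimes b_{3})$ and $(d_{1}\otimes b_{1})\ast\big((d_{2}\otimes b_{2})\ast(d_{3}\otimes b_{3})\big)$ into a sum of four terms using the definition of $\ast$. Each resulting term has the shape (a triple $\prec/\succ$-product of $d_{1},d_{2},d_{3}$) $\otimes$ (a triple $\cdot$-product of $b_{1},b_{2},b_{3}$), so the two sides are an eight-term comparison that factors through the tensor decomposition.

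The key observation concerns the perm factor. The two defining identities $b_{1}(b_{2}b_{3})=(b_{1}b_{2})b_{3}=(b_{2}b_{1})b_{3}$ say precisely that the product $\cdot$ is associative and that the first two factors of any left-normed triple product may be interchanged. Consequently the six orderings of $b_{1},b_{2},b_{3}$ collapse to only three values, each determined by its rightmost factor: writing $X_{k}$ for the common value of the triple products whose rightmost factor is $b_{k}$, one checks $X_{3}=(b_{1}b_{2})b_{3}=(b_{2}b_{1})b_{3}=b_{1}(b_{2}b_{3})$, next $X_{2}=(b_{1}b_{3})b_{2}=(b_{3}b_{1})b_{2}=b_{1}(b_{3}b_{2})=b_{3}(b_{1}b_{2})$, and finally $X_{1}=(b_{2}b_{3})b_{1}=(b_{3}b_{2})b_{1}=b_{3}(b_{2}b_{1})$, where within each class the equalities come from associativity together with the interchange of the first two factors. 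I would then rewrite every $B$-factor appearing in the two expansions in terms of $X_{1},X_{2},X_{3}$.

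Finally I would collect the terms on each side according to their perm factor and compare the $D$-coefficients class by class. The terms carrying $X_{3}$ contribute $(d_{1}\succ d_{2})\succ d_{3}+(d_{1}\prec d_{2})\succ d_{3}$ on the left and $d_{1}\succ(d_{2}\succ d_{3})$ on the right, which agree by the third dendriform axiom; the terms carrying $X_{2}$ contribute $(d_{1}\succ d_{2})\prec d_{3}$ versus $d_{1}\succ(d_{2}\prec d_{3})$, which agree by the second axiom; and the terms carrying $X_{1}$ contribute $(d_{1}\prec d_{2})\prec d_{3}$ versus $d_{1}\prec(d_{2}\prec d_{3})+d_{1}\prec(d_{2}\succ d_{3})$, which agree by the first axiom. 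Hence the two sides coincide and $\ast$ is associative.

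The only delicate point, and the main obstacle, is the bookkeeping in the perm reduction: one must assign each of the four product terms on each side to the correct class $X_{k}$ before comparing $D$-coefficients, and it is easy to mislabel a term if the associativity and interchange identities are applied carelessly. The structural reason the computation closes cleanly is the exact correspondence between the three classes $X_{1},X_{2},X_{3}$, indexed by the rightmost perm factor, and the three defining identities of a dendriform algebra. Recognizing this correspondence at the outset is what reduces the verification to three one-line checks rather than an unstructured eight-term comparison.
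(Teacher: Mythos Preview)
Your proof is correct and follows essentially the same approach as the paper: a direct expansion of both sides of the associativity identity on simple tensors. The paper's proof is terser and leaves the matching of terms implicit, whereas your organization by the rightmost perm factor $X_{1},X_{2},X_{3}$ makes the one-to-one correspondence with the three dendriform axioms explicit; this is the same argument with clearer bookkeeping.
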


\begin{proof}
For any $d_{1}, d_{2}, d_{3}\in D$ and $b_{1}, b_{2}, b_{3}\in B$,
\begin{align*}
&\;((d_{1}\otimes b_{1})\ast(d_{2}\otimes b_{2}))\ast(d_{3}\otimes b_{3})\\
=&\;\big((d_{1}\succ d_{2})\otimes(b_{1}b_{2})
+(d_{1}\prec d_{2})\otimes(b_{2}b_{1})\big)\ast(d_{3}\otimes b_{3})\\
=&\;((d_{1}\succ d_{2})\succ d_{3})\otimes((b_{1}b_{2})b_{3})
+((d_{1}\succ d_{2})\prec d_{3})\otimes(b_{3}(b_{1}b_{2}))\\[-1mm]
&\qquad+((d_{1}\prec d_{2})\succ d_{3})\otimes((b_{2}b_{1})b_{3})
+((d_{1}\prec d_{2})\prec d_{3})\otimes(b_{3}(b_{2}b_{1}))\\
=&\;(d_{1}\otimes b_{1})\ast\big((d_{2}\succ d_{3})\otimes(b_{2}b_{3})
+(d_{2}\prec d_{3})\otimes(b_{3}b_{2})\big)\\
=&\;(d_{1}\otimes b_{1})\ast((d_{2}\otimes b_{2})\ast(d_{3}\otimes b_{3})).
\end{align*}
Thus, $(D\otimes B, \ast)$ is an associative algebra.
\end{proof}

Moreover, note that each associative algebra $(A, \ast)$ has a Lie algebra structure
by the commutator, i.e., the bracket $[-,-]: A\otimes A\rightarrow A$ is given by
$[a_{1}, a_{2}]=a_{1}\diamond a_{2}-a_{2}\diamond a_{1}$ for any $a_{1}, a_{2}\in A$.
This Lie algebra $(A, [-,-])$ is called the sub-adjacent Lie algebra of $(A, \ast)$.
We have multiple methods to construct a Lie algebra from a dendriform algebra.
One approach is to use the tensor product with a perm algebra to obtain an associative
algebra, and then use commutator to obtain a Lie algebra; Another approach is to
first obtain a pre-Lie algebra, and then use the tensor product of a pre-Lie algebra
and a perm algebra to obtain the Lie algebra. In fact, the Lie algebras obtained
by these two methods are the same.

\begin{pro}\label{pro:comm-diag}
Let $(D, \prec, \succ)$ be a dendriform algebra and $(B, \cdot)$ be a perm algebra.
Then we have a commutative diagram:
$$
\xymatrix@C=3cm@R=0.5cm{
\txt{$(D, \prec, \succ)$ \\ {\tiny a dendriform algebra}}
\ar[d]_-{\mbox{\tiny $d_{1}\diamond d_{2}=d_{1}\succ d_{2}-d_{2}\prec d_{1}$}}
\ar[r]^-{\mbox{\tiny $-\otimes B$}}
&\txt{$(D\otimes B, \ast)$\\ {\tiny an associative algebra}}
\ar[d]^-{\mbox{\tiny $[x_{1}, x_{2}]=x_{1}\ast x_{2}-x_{2}\ast x_{1}$}} \\
\txt{$(D, \diamond)$ \\ {\tiny a pre-Lie algebra}} \ar[r]^-{\mbox{\tiny $-\otimes B$}}
& \txt{$(D\otimes B, [-,-])$ \\
{\tiny a Lie algebra}}}
$$
\end{pro}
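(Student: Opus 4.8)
The plan is to verify commutativity by computing the Lie bracket produced by each of the two composites on a pair of simple tensors and checking that the results agree; since both brackets are bilinear and $D\otimes B$ is spanned by elements of the form $d\otimes b$, agreement on such elements suffices. No algebra axioms are actually needed for this comparison: the associativity of $(D\otimes B,\ast)$, the pre-Lie identity for $(D,\diamond)$, and the perm identities for $(B,\cdot)$ have already been supplied by Propositions \ref{pro:perm-dend}, \ref{pro:dend-pre} and \ref{pro:perm-per}, so what remains is purely a matching of the two bracket formulas.

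First I would traverse the top-then-right route. Applying Proposition \ref{pro:perm-dend} and then the commutator $[x_{1},x_{2}]=x_{1}\ast x_{2}-x_{2}\ast x_{1}$, I would expand
$$
[d_{1}\otimes b_{1},\,d_{2}\otimes b_{2}]
=(d_{1}\succ d_{2})\otimes(b_{1}b_{2})+(d_{1}\prec d_{2})\otimes(b_{2}b_{1})
-(d_{2}\succ d_{1})\otimes(b_{2}b_{1})-(d_{2}\prec d_{1})\otimes(b_{1}b_{2}).
$$
Next I would regroup the four terms according to which product, $b_{1}b_{2}$ or $b_{2}b_{1}$, appears in the $B$-factor, obtaining $(d_{1}\succ d_{2}-d_{2}\prec d_{1})\otimes(b_{1}b_{2})+(d_{1}\prec d_{2}-d_{2}\succ d_{1})\otimes(b_{2}b_{1})$.

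Then I would invoke the definition of the induced pre-Lie product from Proposition \ref{pro:dend-pre}, namely $d_{1}\diamond d_{2}=d_{1}\succ d_{2}-d_{2}\prec d_{1}$, to identify the first coefficient as $d_{1}\diamond d_{2}$ and the second as $d_{1}\prec d_{2}-d_{2}\succ d_{1}=-(d_{2}\diamond d_{1})$. This rewrites the expression as $(d_{1}\diamond d_{2})\otimes(b_{1}b_{2})-(d_{2}\diamond d_{1})\otimes(b_{2}b_{1})$, which is exactly the bracket that the left-then-bottom route produces upon applying Proposition \ref{pro:perm-per} to the induced pre-Lie algebra $(D,\diamond)$. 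Hence the two composites coincide and the diagram commutes.

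I do not expect any genuine obstacle here: the argument is a short bookkeeping calculation, and the only point requiring care is the correct tracking of signs and of the two distinct $B$-products $b_{1}b_{2}$ and $b_{2}b_{1}$ when regrouping. It is worth remarking that the equality is \emph{formal} and uses neither the perm axioms nor the dendriform axioms, so the real content of the proposition is not the computation itself but the already-established fact that both routes genuinely land among Lie algebras.
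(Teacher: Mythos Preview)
Your proposal is correct and follows essentially the same approach as the paper: both proofs expand the two brackets on simple tensors $d_{1}\otimes b_{1}$ and $d_{2}\otimes b_{2}$ and verify the resulting four-term expressions agree. Your additional observation that the comparison is purely formal and uses none of the algebra axioms is a nice clarifying remark not made explicit in the paper.
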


\begin{proof}
On the one hand, by Proposition \ref{pro:perm-dend}, we get a Lie algebra structure
on $D\otimes B$, where the bracket is given by
\begin{align*}
&\;[d_{1}\otimes b_{1},\; d_{2}\otimes b_{2}]\\
=&\;(d_{1}\otimes b_{1})\ast(d_{2}\otimes b_{2})-(d_{2}\otimes b_{2})\ast(d_{1}\otimes b_{1})\\
=&\;(d_{1}\succ d_{2})\otimes(b_{1}b_{2})+(d_{1}\prec d_{2})\otimes(b_{2}b_{1})
-(d_{2}\succ d_{1})\otimes(b_{2}b_{1})-(d_{2}\prec d_{1})\otimes(b_{1}b_{2}),
\end{align*}
for any $b_{1}, b_{2}\in B$ and $d_{1}, d_{2}\in D$.
On the other hand, by Propositions \ref{pro:perm-per} and \ref{pro:dend-pre},
we get a Lie algebra $(D\otimes B, [-,-])$, where
\begin{align*}
&\;[d_{1}\otimes b_{1},\; d_{2}\otimes b_{2}]\\
=&\;(d_{1}\diamond d_{2})\otimes(b_{1}b_{2})-(d_{2}\diamond d_{1})\otimes(b_{2}b_{1})\\
=&\;(d_{1}\succ d_{2})\otimes(b_{1}b_{2})-(d_{2}\prec d_{1})\otimes(b_{1}b_{2})
-(d_{2}\succ d_{1})\otimes(b_{2}b_{1})+(d_{1}\prec d_{2})\otimes(b_{2}b_{1}),
\end{align*}
for any $b_{1}, b_{2}\in B$ and $d_{1}, d_{2}\in D$. Thus, the diagram is commutative.
\end{proof}

\begin{ex}\label{ex:comm-algs}
Consider the 2-dimensional dendriform algebra $(D=\Bbbk\{e_{1}, e_{2}\}, \prec, \succ)$
given in Example \ref{ex:D-alg}, i.e., $e_{1}\succ e_{1}=e_{1}$ and $e_{2}\prec e_{1}
=e_{2}$. Then $(D, \prec, \succ)$ induces a pre-Lie algebra $(D, \diamond)$, where the
nonzero products are given by $e_{1}\diamond e_{1}=e_{1}$ and $e_{1}\diamond e_{2}=-e_{2}$.
Let $(B=\Bbbk\{x_{1}, x_{2}\}, \cdot)$ be a perm algebra given by $x_{2}x_{1}=x_{1}$
and $x_{2}x_{2}=x_{2}$. Then by Proposition \ref{pro:perm-per}, we get a 4-dimensional
Lie algebra $(D\otimes B, [-,-])$, where
$$
[e_{1}\otimes x_{1},\; e_{1}\otimes x_{2}]=-e_{1}\otimes x_{1},\quad
[e_{1}\otimes x_{2},\; e_{2}\otimes x_{1}]=-e_{2}\otimes x_{1},\quad
[e_{1}\otimes x_{2},\; e_{2}\otimes x_{2}]=-e_{2}\otimes x_{2},
$$
and others are all zero. On the other hand, by the tensor product of $(D, \prec, \succ)$
and $(B, \cdot)$, we obtain a 4-dimensional associative algebra $(D\otimes B, \ast)$,
where
\begin{align*}
& (e_{1}\otimes x_{2})\ast(e_{1}\otimes x_{1})=e_{1}\otimes x_{1},\qquad
(e_{1}\otimes x_{2})\ast(e_{1}\otimes x_{2})=e_{1}\otimes x_{2},\\
& (e_{2}\otimes x_{1})\ast(e_{1}\otimes x_{2})=e_{2}\otimes x_{1},\qquad
(e_{2}\otimes x_{2})\ast(e_{1}\otimes x_{2})=e_{2}\otimes x_{2},
\end{align*}
and others are all zero. It is easy to see that the Lie algebra obtained by taking the
commutator of $(D\otimes B, \ast)$ is exactly $(D\otimes B, [-,-])$.
\end{ex}

In this paper, we will prove that this commutative diagram is also correct at
the level of bialgebras. 

\bigskip
\section{Quasi-triangular Lie bialgebras induced by pre-Lie bialgebras}\label{sec:Lie-preli}
In \cite{LZB}, a Lie bialgebra structure on the tensor product of a pre-Lie bialgebra
and a quadratic perm algebra is given. In this section, we consider a special class
of Lie bialgebras, the quasi-triangular Lie bialgebras. We show that a quasi-triangular
pre-Lie bialgebra induces a quasi-triangular Lie bialgebra structure on the tensor product.

Recall that a {\bf pre-Lie coalgebra} is a pair $(A, \vartheta)$, where $A$ is a vector
space and $\vartheta: A\rightarrow A\otimes A$ is a linear map satisfying
$$
(\id\otimes\vartheta)\vartheta-(\tau\otimes\id)(\id\otimes\vartheta)\vartheta
=(\vartheta\otimes\id)\vartheta-(\tau\otimes\id)(\vartheta\otimes\id)\vartheta,
$$
where $\tau: A\otimes A\rightarrow A\otimes A$ is the flip operator defined by
$\tau(a_{1}\otimes a_{2}):=a_{2}\otimes a_{1}$ for all $a_{1}, a_{2}\in A$.

\begin{defi}[\cite{Bai1}]\label{def:pre-li-bia}
Let $(A, \diamond)$ be a pre-Lie algebra and $(A, \vartheta)$ be a pre-Lie coalgebra.
If for any $a_{1}, a_{2}\in A$, the following compatibility conditions are satisfied:
\begin{align*}
&(\vartheta-\tau\vartheta)(a_{1}\diamond a_{2})
-(\fl_{A}(a_{1})\otimes\id)((\vartheta-\tau\vartheta)(a_{2}))\\[-1mm]
&\quad-(\id\otimes\fl_{A}(a_{1}))((\vartheta-\tau\vartheta)(a_{2}))
-(\id\otimes\fr_{A}(a_{2}))(\vartheta(a_{1}))
+(\fr_{A}(a_{2})\otimes\id)(\tau(\vartheta(a_{1})))=0, \\
&\vartheta(a_{1}\diamond a_{2}-a_{2}\diamond a_{1})
-(\id\otimes(\fr_{A}(a_{2})-\fl_{A}(a_{2})))(\vartheta(a_{1}))\\[-1mm]
&\quad-(\id\otimes(\fl_{A}(a_{1})-\fr_{A}(a_{1})))(\vartheta(a_{2}))
-(\fl_{A}(a_{1})\otimes\id)(\vartheta(a_{2}))
+(\fl_{A}(a_{2})\otimes\id)(\vartheta(a_{1}))=0,
\end{align*}
then we call $(A, \diamond, \vartheta)$ a {\bf pre-Lie bialgebra}.
\end{defi}

Recall that  {\bf Lie coalgebra} $(\g, \delta)$
is a vector space $\g$ with a linear map $\delta: \g\rightarrow\g\otimes\g$,
such that $\tau\delta=-\delta$ and $(\id\otimes\delta)\delta
-(\tau\otimes\id)(\id\otimes\delta)\delta=(\delta\otimes\id)\delta$.
A {\bf Lie bialgebra} is a triple $(\g, [-,-], \delta)$ such that
$(\g, [-,-])$ is a Lie algebra, $(\g, \delta)$ is a Lie coalgebra, and the following
compatibility condition holds:
$$
\delta([g_{1}, g_{2}])=(\ad_{\g}(g_{1})\otimes\id+\id\otimes\ad_{\g}(g_{1}))
(\delta(g_{2}))-(\ad_{\g}(g_{2})\otimes\id+\id\otimes\ad_{\g}(g_{2}))(\delta(g_{1})),
$$
where $\ad_{\g}(g_{1})(g_{2})=[g_{1}, g_{2}]$, for all $g_{1}, g_{2}\in\g$.

\begin{thm}[\cite{LZB}]\label{thm:pre-lie}
Let $(A, \diamond, \vartheta)$ be a pre-Lie bialgebra and $(B, \circ, \omega)$
be a quadratic perm algebra, and $(A\otimes B, [-,-])$ be the induced Lie
algebra from $(A, \diamond)$ by $(B, \circ)$. Define a linear map $\delta:
A\otimes B\rightarrow(A\otimes B)\otimes(A\otimes B)$ by
\begin{align}
\delta(a\otimes b)=(\id\otimes\id-\tau)(\vartheta(a)\bullet\nu_{\omega}(b))
:=(\id\otimes\id-\tau)\Big(\sum_{(a)}\sum_{(b)}
(a_{(1)}\otimes b_{(1)})\otimes(a_{(2)}\otimes b_{(2)})\Big), \label{pl-pm}
\end{align}
for any $a\in A$ and $b\in B$, where $\vartheta(a)=\sum_{(a)}a_{(1)}\otimes a_{(2)}$
and $\nu_{\omega}(b)=\sum_{(b)}b_{(1)}\otimes b_{(2)}$ in the Sweedler notation.
Then $(A\otimes B, [-,-], \delta)$ is a Lie bialgebra, which is called the
{\bf Lie bialgebra induced from $(A, \diamond, \vartheta)$ by $(B, \circ, \omega)$}.
\end{thm}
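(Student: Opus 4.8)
The plan is to verify directly the three defining properties of a Lie bialgebra for the triple $(A\otimes B, [-,-], \delta)$: that $\delta$ is skew-symmetric, that $\delta$ satisfies the co-Jacobi identity (so that $(A\otimes B, \delta)$ is a Lie coalgebra), and that $\delta$ obeys the cocycle compatibility with the bracket $[-,-]$. The skew-symmetry $\tau\delta=-\delta$ is immediate from the defining formula \eqref{pl-pm}: since $\tau(\id\otimes\id-\tau)=-(\id\otimes\id-\tau)$, applying $\tau$ to $\delta(a\otimes b)=(\id\otimes\id-\tau)(\vartheta(a)\bullet\nu_{\omega}(b))$ reverses its sign. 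So the content lies in the other two properties.

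For the co-Jacobi identity I would exploit that $\delta$ is, up to the antisymmetrizer $(\id\otimes\id-\tau)$, built from the same pattern as the bracket of Proposition \ref{pro:perm-per} with the roles of product and coproduct dualized. Concretely, I would expand $(\id\otimes\delta)\delta-(\tau\otimes\id)(\id\otimes\delta)\delta-(\delta\otimes\id)\delta$ applied to $a\otimes b$ and organize the resulting terms so that the $A$-tensor-factors are governed entirely by iterated applications of $\vartheta$ and the $B$-tensor-factors entirely by iterated applications of $\nu_{\omega}$. The pre-Lie coalgebra axiom for $(A,\vartheta)$ then cancels the $A$-side exactly as the pre-Lie left-symmetry identity does in the Lie-algebra construction, while the perm coalgebra relations carried by $\nu_{\omega}$ — which follow from $(B,\cdot,\omega)$ being a quadratic perm algebra — cancel the $B$-side. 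This step is in effect the linear dual of the Jacobi-identity verification for the induced Lie algebra, which structurally parallels the associativity check carried out explicitly in Proposition \ref{pro:perm-dend}.

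The main work, and the main obstacle, is the cocycle compatibility
\[
\delta([x_{1},x_{2}])=(\ad_{\g}(x_{1})\otimes\id+\id\otimes\ad_{\g}(x_{1}))\delta(x_{2})-(\ad_{\g}(x_{2})\otimes\id+\id\otimes\ad_{\g}(x_{2}))\delta(x_{1}),
\]
for $x_{i}=a_{i}\otimes b_{i}$. I would substitute the bracket formula $[a_{1}\otimes b_{1},a_{2}\otimes b_{2}]=(a_{1}\diamond a_{2})\otimes(b_{1}b_{2})-(a_{2}\diamond a_{1})\otimes(b_{2}b_{1})$ into the left-hand side and apply $\delta$, then expand the right-hand side using $\delta(a_{i}\otimes b_{i})=(\id\otimes\id-\tau)(\vartheta(a_{i})\bullet\nu_{\omega}(b_{i}))$ together with the adjoint action induced by $[-,-]$. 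The goal is to match the two sides after separating the contributions of $A$ and $B$. Here the two compatibility conditions of the pre-Lie bialgebra in Definition \ref{def:pre-li-bia} supply the identities needed on the $A$-factors: the second (Lie-type) condition accounts for the terms where the coproduct meets the bracket directly, while the first (left-symmetric) condition handles the remaining asymmetric terms involving $\fl_{A}$ and $\fr_{A}$. On the $B$-factors the essential inputs are the antisymmetry and invariance of $\omega$, which — translated into relations for $\nu_{\omega}$ — allow one to move the perm products $b_{1}b_{2}$ and $b_{2}b_{1}$ across the coproduct, rewriting expressions such as $\nu_{\omega}(b_{1}b_{2})$ in terms of multiplication operators acting on $\nu_{\omega}(b_{i})$.

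The delicate point I expect to dominate the computation is the bookkeeping that keeps the $A$-slots and $B$-slots aligned under the flip $\tau$: each occurrence of $(\id\otimes\id-\tau)$ and each perm relation $b_{1}(b_{2}b_{3})=(b_{1}b_{2})b_{3}=(b_{2}b_{1})b_{3}$ permutes which tensor leg carries which factor, and one must verify that the antisymmetrized pre-Lie bialgebra conditions recombine precisely with the invariance relations for $\omega$ so that both sides agree term by term. Once these two families of relations are matched, the compatibility follows and $(A\otimes B, [-,-], \delta)$ is a Lie bialgebra.
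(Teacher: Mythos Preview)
The paper does not prove this theorem: it is stated with the attribution \cite{LZB} and no proof is given, so there is no argument in the present paper to compare your proposal against. Your outline --- verify skew-symmetry from the antisymmetrizer, obtain co-Jacobi by dualizing the Jacobi check of Proposition~\ref{pro:perm-per} using the pre-Lie coalgebra axiom together with the perm coalgebra relations from Lemma~\ref{lem:l-ass-dual}, and then establish the cocycle condition by expanding both sides and matching the $A$-side via the two compatibility identities of Definition~\ref{def:pre-li-bia} against the $B$-side via the invariance of $\omega$ --- is the natural direct approach and is consistent with how the surrounding results (e.g.\ Theorem~\ref{thm:den-perm-ass}) are handled in this paper. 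What you have written, however, remains a plan rather than a proof: the co-Jacobi step and especially the cocycle step require explicit identities relating $\nu_{\omega}(b_{1}b_{2})$ to actions of $\fl_{B}(b_{i})$, $\fr_{B}(b_{i})$ on $\nu_{\omega}(b_{j})$ (the analogues of the relations derived in the proof of Theorem~\ref{thm:den-perm-ass}), and you would need to write these down and carry out the term-by-term matching to have an actual argument. For the full details you should consult \cite{LZB}.
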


Recall that a Lie bialgebra $(\g, [-,-], \delta)$ is called {\bf coboundary}
if there exists an element $r\in\g\otimes\g$ such that $\delta=\delta_{r}$,
where
\begin{align}
\delta_{r}(g)=(\id\otimes\ad_{\g}(g)+\ad_{\g}(g)\otimes\id)(r), \label{lie-cobo}
\end{align}
for any $g\in\g$. Let $(\g, [-,-])$ be a Lie algebra. An element
$r=\sum_{i}x_{i}\otimes y_{i}\in\g\otimes\g$ is said to be {\bf $\ad_{\g}$-invariant}
if $(\id\otimes\ad_{\g}(g)+\ad_{\g}(g)\otimes\id)(r)=0$. The equation
$$
\mathbf{C}_{r}:=[r_{12}, r_{13}]+[r_{13}, r_{23}]+[r_{12}, r_{23}]=0
$$
is called the {\bf classical Yang-Baxter equation} (or, $\CYBE$) in $(\g, [-,-])$,
where $[r_{12}, r_{13}]=\sum_{i,j}[x_{i}, x_{j}]\otimes y_{i}\otimes y_{j}$,
$[r_{13}, r_{23}]=\sum_{i,j}x_{i}\otimes x_{j}\otimes[y_{i}, y_{j}]$ and
$[r_{12}, r_{23}]=\sum_{i,j}x_{i}\otimes[y_{i}, x_{j}]\otimes y_{j}$.

\begin{pro}[\cite{RS,LS}]\label{pro:splie-bia}
Let $(\g, [-,-])$ be a Lie algebra, $r\in\g\otimes\g$ and $\delta_{r}:
\g\rightarrow\g\otimes\g$ be the linear map defined by Eq. \eqref{lie-cobo}.
\begin{enumerate}
\item[$(i)$] If $r$ is a solution of the $\CYBE$ in $(\g, [-,-])$ and $r+\tau(r)$ is
     $\ad_{\g}$-invariant, then $(\g, [-,-], \delta_{r})$ is a Lie bialgebra,
     which is called a {\bf quasi-triangular Lie bialgebra} associated with $r$.
\item[$(ii)$] If $r$ is a skew-symmetric solution of the $\CYBE$ in $(\g, [-,-])$ then
     $(\g, [-,-], \delta_{r})$ is a Lie bialgebra, which is called a {\bf triangular Lie
     bialgebra} associated with $r$.
\item[$(iii)$] If $(\g, [-,-], \delta_{r})$ is a quasi-triangular Lie bialgebra and
     $\mathcal{I}=r^{\sharp}+\tau(r)^{\sharp}: \g^{\ast}\rightarrow\g$ is an isomorphism
     of vector spaces, then $(\g, [-,-], \delta_{r})$ is called a {\bf factorizable Lie
     bialgebra} associated with $r$.
\end{enumerate}
\end{pro}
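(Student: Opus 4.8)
The plan is to note first that (iii) is merely a definition and that (ii) is the degenerate case of (i): if $r$ is skew-symmetric then $r+\tau(r)=0$, which is trivially $\ad_\g$-invariant, so (ii) reduces to (i). Hence the whole task is to prove (i), namely that whenever $\mathbf{C}_r=0$ and $r+\tau(r)$ is $\ad_\g$-invariant, the triple $(\g,[-,-],\delta_r)$ with $\delta_r$ as in \eqref{lie-cobo} satisfies the three defining axioms of a Lie bialgebra: the cobracket compatibility (cocycle) condition, the co-skew-symmetry $\tau\delta_r=-\delta_r$, and the co-Jacobi identity. I would write $r=\sum_i x_i\otimes y_i$ and check these three in turn.

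For the compatibility condition, I would observe that $\delta_r(g)=(\id\otimes\ad_\g(g)+\ad_\g(g)\otimes\id)(r)$ is, by construction, the value at $g$ of the principal derivation (coboundary) attached to the adjoint action of $\g$ on $\g\otimes\g$. Since $\g\otimes\g$ is a $\g$-module for this action---this being exactly the Jacobi identity of $[-,-]$---coboundaries are automatically $1$-cocycles, and the required identity $\delta_r([g_1,g_2])=(\ad_\g(g_1)\otimes\id+\id\otimes\ad_\g(g_1))\delta_r(g_2)-(\ad_\g(g_2)\otimes\id+\id\otimes\ad_\g(g_2))\delta_r(g_1)$ holds for \emph{any} $r$, using no hypothesis. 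For the co-skew-symmetry, a direct expansion gives $\delta_r(g)+\tau\delta_r(g)=(\id\otimes\ad_\g(g)+\ad_\g(g)\otimes\id)(r+\tau(r))$; the $\ad_\g$-invariance of $r+\tau(r)$ is precisely the vanishing of the right-hand side, so $\tau\delta_r=-\delta_r$.

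The main obstacle is the co-Jacobi identity $(\id\otimes\delta_r)\delta_r-(\tau\otimes\id)(\id\otimes\delta_r)\delta_r=(\delta_r\otimes\id)\delta_r$. Here I would invoke the classical identity that, once $\delta_r$ is known to be skew-symmetric, the co-Jacobiator $(\id\otimes\delta_r)\delta_r(g)-(\tau\otimes\id)(\id\otimes\delta_r)\delta_r(g)-(\delta_r\otimes\id)\delta_r(g)$ equals the adjoint action of $g$, spread over the three tensor legs, on the element $\mathbf{C}_r=[r_{12},r_{13}]+[r_{13},r_{23}]+[r_{12},r_{23}]$. Establishing this identity is the heavy part: one expands $\delta_r$ twice, repeatedly applies the Jacobi identity to reassemble the double brackets $[x_i,x_j]$, $[y_i,y_j]$ and $[y_i,x_j]$, and sorts the resulting nine terms into the three summands of $\mathbf{C}_r$. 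Since $r$ solves the $\CYBE$, $\mathbf{C}_r=0$, so its adjoint image vanishes and the co-Jacobi identity holds. This completes (i); the specialization $r+\tau(r)=0$ yields (ii), and (iii) is the stated definition.
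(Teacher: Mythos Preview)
Your outline is correct and is precisely the standard argument for this classical result. Note, however, that the paper does not give its own proof of this proposition: it is stated with the citation \cite{RS,LS} and used as a known fact, so there is no in-paper proof to compare against. Your sketch---compatibility from the coboundary nature of $\delta_r$, co-skew-symmetry from the $\ad_\g$-invariance of $r+\tau(r)$, and co-Jacobi from the vanishing of $\mathbf{C}_r$---is exactly the approach one finds in those references and in standard treatments such as Chari--Pressley or Etingof--Schiffmann.
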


Recall that a pre-Lie bialgebra $(A, \diamond, \vartheta)$ is called
{\bf coboundary} if there exists $r\in A\otimes A$ such that
\begin{align}
\vartheta(a)=\vartheta_{r}(a):=\big(\fl_{A}(a)\otimes\id
+\id\otimes(\fl_{A}-\fr_{A})(a)\big)(r), \label{preli-cobo}
\end{align}
for all $a\in A$. Let $(A, \diamond)$ be a pre-Lie algebra and $r\in A\otimes A$.
We say $r$ is a solution of the {\bf pre-Lie Yang-Baxter equation} (or $\PLYBE$)
in $(A, \diamond)$ if
\begin{align*}
\mathbf{PL}_{r}:&=r_{13}\diamond r_{12}+r_{23}\diamond r_{12}+r_{21}\diamond r_{13}
+r_{23}\diamond r_{13} \\[-1mm]
&\quad-r_{23}\diamond r_{21}-r_{12}\diamond r_{23}
-r_{13}\diamond r_{21}-r_{13}\diamond r_{23}=0,
\end{align*}
where $r_{13}\diamond r_{12}:=\sum_{i,j}(x_{i}\diamond x_{j})\otimes y_{j}\otimes y_{i}$,
$r_{23}\diamond r_{12}:=\sum_{i,j}x_{j}\otimes(x_{i}\diamond y_{j})\otimes y_{i}$,
$r_{21}\diamond r_{13}:=\sum_{i,j}(y_{i}\diamond x_{j})\otimes x_{i}\otimes y_{j}$,
$r_{23}\diamond r_{13}:=\sum_{i,j}x_{j}\otimes x_{i}\otimes(y_{i}\diamond y_{j})$,
$r_{23}\diamond r_{21}:=\sum_{i,j}y_{j}\otimes(x_{i}\diamond x_{j})\otimes y_{i}$,
$r_{12}\diamond r_{23}:=\sum_{i,j}x_{i}\otimes(y_{i}\diamond x_{j})\otimes y_{j}$,
$r_{13}\diamond r_{21}:=\sum_{i,j}(x_{i}\diamond y_{j})\otimes x_{j}\otimes y_{i}$,
$r_{13}\diamond r_{23}:=\sum_{i,j}x_{i}\otimes x_{j}\otimes(y_{i}\diamond y_{j})$.
One can check that $-r$ and $\tau(r)$ are solutions of the $\PLYBE$ in
$(A, \diamond)$ if $r\in A\otimes A$ is also a solution of the $\PLYBE$ in
$(A, \diamond)$. Recall that $r\in A\otimes A$ is called {\bf symmetric}
if $r=\tau(r)$, and $r$ is called {\bf $(\fl_{A}, \ad_{A})$-invariant} if
$$
\big(\fl_{A}(a)\otimes\id+\id\otimes(\fl_{A}-\fr_{A})(a)\big)(r)=0,
$$
for any $a\in A$.

\begin{pro}[\cite{Bai1,WBLS}]\label{pro:spec-plbia}
Let $(A, \diamond)$ be a pre-Lie algebra, $r\in A\otimes A$ and
$\vartheta_{r}: A\rightarrow A\otimes A$ be a linear map defined by Eq. \eqref{preli-cobo}.
Then, we have
\begin{enumerate}
\item[$(i)$] if $r$ is a symmetric solution of the $\PLYBE$ in $(A, \diamond)$,
      then $(A, \diamond, \vartheta_{r})$ is a pre-Lie bialgebra, which is
      called a {\bf triangular pre-Lie bialgebra} associated with $r$;
\item[$(ii)$] if $r$ is a solution of the $\PLYBE$ in $(A, \diamond)$ and $r-\tau(r)$ is
      $(\fl_{A}, \ad_{A})$-invariant, then $(A, \diamond, \vartheta_{r})$ is a
      pre-Lie bialgebra, which is called a {\bf quasi-triangular pre-Lie bialgebra}
      associated with $r$;
\item[$(iii)$] if $(A, \diamond, \vartheta_{r})$ is a quasi-triangular pre-Lie bialgebra and
     $\mathcal{I}=r^{\sharp}-\tau(r)^{\sharp}: A^{\ast}\rightarrow A$ is an isomorphism
     of vector spaces, then $(A, \diamond, \vartheta_{r})$ is called a {\bf factorizable
     pre-Lie bialgebra} associated with $r$.
\end{enumerate}
\end{pro}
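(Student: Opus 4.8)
The plan is to reduce everything to part~(ii) and a single coboundary computation. Part~(iii) asserts nothing beyond a naming convention (it christens a quasi-triangular pre-Lie bialgebra satisfying the extra nondegeneracy hypothesis that $\mathcal{I}=r^{\sharp}-\tau(r)^{\sharp}$ is an isomorphism), so there is no content to prove. Part~(i) is the special case of part~(ii) in which $r$ is symmetric: if $r=\tau(r)$ then $r-\tau(r)=0$, which is vacuously $(\fl_{A},\ad_{A})$-invariant, so a symmetric solution of the $\PLYBE$ is in particular a solution whose skew-symmetric part is invariant. Hence it suffices to prove~(ii), and part~(i) then follows immediately.

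For part~(ii) I must check that $\vartheta_{r}$, defined by Eq.~\eqref{preli-cobo}, makes $(A,\diamond,\vartheta_{r})$ a pre-Lie bialgebra. By Definition~\ref{def:pre-li-bia} this requires (a)~that $(A,\vartheta_{r})$ be a pre-Lie coalgebra, i.e. the co-associativity-type identity $(\id\otimes\vartheta_{r})\vartheta_{r}-(\tau\otimes\id)(\id\otimes\vartheta_{r})\vartheta_{r}=(\vartheta_{r}\otimes\id)\vartheta_{r}-(\tau\otimes\id)(\vartheta_{r}\otimes\id)\vartheta_{r}$, and (b)~the two compatibility conditions of that definition. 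I would fix $r=\sum_{i}x_{i}\otimes y_{i}$ and record once that $\vartheta_{r}(a)=\sum_{i}(a\diamond x_{i})\otimes y_{i}+\sum_{i}x_{i}\otimes(a\diamond y_{i}-y_{i}\diamond a)$, so that every term in (a) and (b) becomes an explicit element of $A^{\otimes 2}$ or $A^{\otimes 3}$ expressed through the products $x_{i}\diamond x_{j}$, $x_{i}\diamond y_{j}$, and so on.

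The heart of the matter is the ensuing bookkeeping. I would substitute the coboundary expression into each of the three left-hand sides and expand using only the pre-Lie identity of Definition~\ref{def:pre-Liealg}. My claim, to be confirmed by collecting terms, is that each left-hand side organizes into two pieces: one that is exactly the appropriate tensor placement of $\mathbf{PL}_{r}$, the $\PLYBE$ element, and one obtained by applying the operators $\fl_{A}(a)\otimes\id$ and $\id\otimes(\fl_{A}-\fr_{A})(a)$ to the skew part $r-\tau(r)$, i.e. the $(\fl_{A},\ad_{A})$-invariance defect. Since $\mathbf{PL}_{r}=0$ and $r-\tau(r)$ is $(\fl_{A},\ad_{A})$-invariant by hypothesis, both pieces vanish, establishing the coalgebra axiom and both compatibility conditions at once. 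The only channel through which the skew part of $r$ can survive is this invariance defect, which is precisely why in part~(i) symmetry of $r$ eliminates it outright, whereas in the quasi-triangular case it must be carried along and killed by the invariance hypothesis.

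The main obstacle is exactly this term collection: each expansion produces a large number of triple-tensor summands $\sum_{i,j}(\cdots)\otimes(\cdots)\otimes(\cdots)$, and one must repeatedly invoke the pre-Lie associator identity while carefully tracking the flip $\tau$ and the asymmetric roles of the $x$- and $y$-legs of $r$ in order to recognize $\mathbf{PL}_{r}$ and the invariance defect among the debris. No idea beyond the pre-Lie identity is required; the difficulty is purely organizational. To shorten the verification I would exploit the fact, already noted in the excerpt, that $-r$ and $\tau(r)$ are again solutions of the $\PLYBE$ whenever $r$ is, which lets me transport terms between the two legs and halve the number of independent expansions I must carry out by hand.
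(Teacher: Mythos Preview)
The paper does not supply its own proof of this proposition: it is stated with the citation \cite{Bai1,WBLS} and no proof environment follows. So there is nothing in the paper to compare your argument against directly.

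That said, your reduction is sound and matches the strategy in the cited references. You are right that (iii) is purely a definition and that (i) is the special case $r-\tau(r)=0$ of (ii). Your outline for (ii)---expand $\vartheta_{r}$ into the pre-Lie coalgebra identity and the two compatibility conditions of Definition~\ref{def:pre-li-bia}, then reorganize each resulting expression as a linear combination of permutations of $\mathbf{PL}_{r}$ together with terms obtained by acting on $r-\tau(r)$ via $\fl_{A}(a)\otimes\id+\id\otimes(\fl_{A}-\fr_{A})(a)$---is exactly how the proof goes in \cite{Bai1} (for the symmetric case) and \cite{WBLS} (for the general quasi-triangular case). The only caveat is that you have stated the claim but not carried out the term collection; in practice the three identities do not all reduce to a single copy of $\mathbf{PL}_{r}$ and a single invariance defect, but rather to sums of several permuted copies of each, and the pre-Lie identity must be invoked at specific points to merge associator-type terms. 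If you intend this as a full proof rather than a plan, you would need to display at least one of the three verifications explicitly.
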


Let $(B, \cdot)$ be a perm algebra. A bilinear form $\omega(-,-)$ on
$(B, \cdot)$ is called {\bf invariant} if it satisfies
$$
\omega(b_{1}b_{2},\; b_{3})=\omega(b_{1},\; b_{2}b_{3}-b_{3}b_{2}),
$$
for any $b_{1}, b_{2}, b_{3}\in B$. A {\bf quadratic perm algebra},
denoted by $(B, \cdot, \omega)$, is a perm algebra $(B, \cdot)$
together with an antisymmetric invariant nondegenerate bilinear form $\omega(-,-)$.
Let $(B, \cdot, \omega)$ be a quadratic perm algebra. Then the bilinear form
$\omega(-,-)$ can naturally expand to the tensor product $B\otimes B\otimes\cdots\otimes B$,
i.e.,
$$
\omega(-,-):\qquad (\underbrace{B\otimes\cdots\otimes B}_{\mbox{\tiny $k$-fold}})\otimes
(\underbrace{B\otimes\cdots\otimes B}_{\mbox{\tiny $k$-fold}})\longrightarrow\Bbbk,
$$
$\omega(b_{1}\otimes b_{2}\otimes\cdots\otimes b_{k},\ \ b'_{1}\otimes b'_{2}
\otimes\cdots\otimes b'_{k})=\sum_{i=1}^{k}\sum_{j=1}^{k}\omega(b_{i}, b'_{j})$,
for any $b_{1}, b_{2},\cdots, b_{k}, b'_{1}, b'_{2},\cdots, b'_{k}\in B$.
Then $\omega(-,-)$ on $B\otimes B\otimes\cdots\otimes B$ is also a
nondegenerate bilinear form. Recall that a {\bf perm coalgebra} $(B, \nu)$ is a
vector space $B$ with a linear map $\nu: B\rightarrow B\otimes B$ satisfying
$(\vartheta\otimes\id)\vartheta=(\id\otimes\vartheta)\vartheta
=(\tau\otimes\id)(\id\otimes\vartheta)\vartheta$.

\begin{lem}[\cite{LZB}]\label{lem:l-ass-dual}
Let $(B, \cdot, \omega)$ be a quadratic perm algebra. Define a linear map
$\nu_{\omega}: B\rightarrow B\otimes B$ by $\omega(\nu_{\omega}(b_{1}),\;
b_{2}\otimes b_{3})=-\omega(b_{1},\; b_{2}b_{3})$, for any $b_{1}, b_{2}, b_{3}\in B$.
Then $(B, \nu_{\omega})$ is a perm coalgebra.
\end{lem}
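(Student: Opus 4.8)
The plan is to exploit the nondegeneracy of the extended form $\omega$ and to read off each perm coalgebra axiom as the dual of a perm algebra identity. First I would check that $\nu_{\omega}$ is a well-defined linear map: since the defining relation $\omega(\nu_{\omega}(b_{1}),\,b_{2}\otimes b_{3})=-\omega(b_{1},\,b_{2}b_{3})$ prescribes the value of $\omega(\nu_{\omega}(b_{1}),\,-)$ on every decomposable tensor $b_{2}\otimes b_{3}$, hence on all of $B\otimes B$ by bilinearity, and since the induced form on $B\otimes B$ is nondegenerate, the element $\nu_{\omega}(b_{1})\in B\otimes B$ is uniquely determined and depends linearly on $b_{1}$.

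Next, to verify the three perm coalgebra axioms, namely $(\nu_{\omega}\otimes\id)\nu_{\omega}=(\id\otimes\nu_{\omega})\nu_{\omega}=(\tau\otimes\id)(\id\otimes\nu_{\omega})\nu_{\omega}$ as maps $B\to B\otimes B\otimes B$, I would use that the induced form is nondegenerate on $B\otimes B\otimes B$ as well, so it suffices to show that the three maps agree after pairing with an arbitrary $b_{2}\otimes b_{3}\otimes b_{4}$. The key computation is to evaluate each pairing by applying the defining relation of $\nu_{\omega}$ twice. Writing $\nu_{\omega}(b_{1})=\sum b_{(1)}\otimes b_{(2)}$ and using that the extended form factors through $\omega$ on decomposable tensors, one obtains
\begin{align*}
\omega\big((\nu_{\omega}\otimes\id)\nu_{\omega}(b_{1}),\,b_{2}\otimes b_{3}\otimes b_{4}\big)
&=\omega\big(b_{1},\,(b_{2}b_{3})b_{4}\big),\\
\omega\big((\id\otimes\nu_{\omega})\nu_{\omega}(b_{1}),\,b_{2}\otimes b_{3}\otimes b_{4}\big)
&=\omega\big(b_{1},\,b_{2}(b_{3}b_{4})\big).
\end{align*}
For the third map I would move the flip to the other argument, using that $(\tau\otimes\id)$ is self-adjoint with respect to the extended form, which turns the pairing into $\omega(b_{1},\,b_{3}(b_{2}b_{4}))$.

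Finally, the perm algebra axioms $b_{2}(b_{3}b_{4})=(b_{2}b_{3})b_{4}=(b_{3}b_{2})b_{4}$ (associativity together with left-permutativity) show that the three resulting expressions $\omega(b_{1},(b_{2}b_{3})b_{4})$, $\omega(b_{1},b_{2}(b_{3}b_{4}))$ and $\omega(b_{1},b_{3}(b_{2}b_{4}))$ coincide for all $b_{1},b_{2},b_{3},b_{4}$. By nondegeneracy of the extended form on $B\otimes B\otimes B$ this forces the three maps to be equal, so $(B,\nu_{\omega})$ is a perm coalgebra.

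I expect the main technical obstacle to be the bookkeeping around the flip $\tau$: one must justify carefully that partial application of $\nu_{\omega}$ inside a triple pairing factors correctly and that slot permutations are self-adjoint for the extended form, both of which rest on the extended $\omega$ being the nondegenerate form naturally built from $\omega$ on the tensor factors. It is worth noting that neither the antisymmetry nor the invariance of $\omega$ is actually needed for this lemma; only the nondegeneracy of the extended form and the two defining perm identities enter, with the remaining hypotheses becoming relevant only later for the bialgebra compatibility conditions.
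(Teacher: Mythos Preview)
The paper does not supply its own proof of this lemma; it is quoted from \cite{LZB}. Your dualization argument is the standard one and is correct: pairing against an arbitrary $b_{2}\otimes b_{3}\otimes b_{4}$, unwinding the defining relation of $\nu_{\omega}$ twice, and invoking the perm identities $(b_{2}b_{3})b_{4}=b_{2}(b_{3}b_{4})=b_{3}(b_{2}b_{4})$ is exactly how the result is obtained. One small caution: the paper's displayed formula for the extended form on $B^{\otimes k}$ is written as a \emph{sum} $\sum_{i,j}\omega(b_{i},b'_{j})$, which is surely a typo---compare the completed version $\hat{\varpi}$ later in the paper, which correctly uses the product $\prod_{j}\varpi(x_{j},y_{j})$. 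Your argument tacitly (and rightly) uses the product form, since that is what makes the extended pairing bilinear, nondegenerate, and compatible with partial evaluation of $\nu_{\omega}$; just be aware of this discrepancy if you cite the paper's formula verbatim. Your closing observation that only nondegeneracy (not antisymmetry or invariance) of $\omega$ is used here is also correct.
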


Next, we consider the relation between the solutions of the $\PLYBE$ in a pre-Lie
algebra and the solutions of the $\CYBE$ in the induced Lie algebra.
Let $(B, \cdot, \omega)$ be a quadratic perm algebra and
$\{e_{1}, e_{2},\cdots, e_{n}\}$ be a basis of $B$. Since $\omega(-,-)$ is
antisymmetric nondegenerate, we get a basis $\{f_{1}, f_{2},\cdots, f_{n}\}$ of $B$,
which is called the dual basis of $\{e_{1}, e_{2},\cdots, e_{n}\}$ with respect to
$\omega(-,-)$, by $\omega(e_{i}, f_{j})=\delta_{ij}$, where $\delta_{ij}$ is the
Kronecker delta. Here, for some special solutions of the $\PLYBE$ in a pre-Lie
algebra, we have

\begin{pro}\label{pro:PLYBE-CYBE}
Let $(A, \diamond)$ be a pre-Lie algebra and $(B, \cdot, \omega)$ be a
quadratic perm algebra, and $(A\otimes B, [-,-])$ be the induced Lie algebra.
Suppose that $r=\sum_{i} x_{i}\otimes y_{i}\in A\otimes A$ is a solution
of the $\PLYBE$ in $(A, \diamond)$, $r-\tau(r)$ is $(\fl_{A},\ad_{A})$-invariant. Then
\begin{align}
\widehat{r}=\sum_{i, j}(x_{i}\otimes e_{j})\otimes(y_{i}\otimes f_{j})
\in(A\otimes B)\otimes(A\otimes B)  \label{r-max}
\end{align}
is a solution of the $\CYBE$ in $(A\otimes B, [-,-])$ and $\widehat{r}+\tau(\widehat{r})$
is $\ad_{A\otimes B}$-invariant, where $\{e_{1}, e_{2},\cdots, e_{n}\}$ is a basis of $B$
and $\{f_{1}, f_{2},\cdots, f_{n}\}$ is the dual basis of $\{e_{1}, e_{2},\cdots, e_{n}\}$
with respect to $\omega(-,-)$.
\end{pro}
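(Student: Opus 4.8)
Write $r=\sum_i x_i\otimes y_i$ and set $\kappa:=\sum_j e_j\otimes f_j\in B\otimes B$ for the co-form attached to the dual bases, so that $\widehat r$ is nothing but $r$ with $\kappa$ inserted into the $B$-slots; in particular the construction $r\mapsto\widehat r$ is linear. Before any computation I would record the two facts that drive everything. First, since $\omega$ is antisymmetric the co-form is skew, $\tau(\kappa)=-\kappa$, which gives the leg-swap identity $\tau(\widehat r)=-\widehat{\tau(r)}$ and hence
\[
\widehat r+\tau(\widehat r)=\widehat{\,r-\tau(r)\,}.
\]
Second, invariance of $\omega$ yields, for the right multiplication $R_b$ of $(B,\cdot)$, the $\omega$-adjoint $R_b^{\dagger}=L_b-R_b$ (with $L_b$ the left multiplication), and therefore the \emph{slide identity} $(T\otimes\id)\kappa=(\id\otimes T^{\dagger})\kappa$ for every $T\in\End_{\Bbbk}(B)$; combined with $\tau(\kappa)=-\kappa$ this also gives $(\id\otimes T)\kappa=(T^{\dagger}\otimes\id)\kappa$. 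I will also use the perm identities $L_aL_b=L_{ab}=L_{ba}$ and $(b_1b_2-b_2b_1)c=0$.

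\textbf{The $\CYBE$.} To prove $\mathbf{C}_{\widehat r}=0$ I would expand the three brackets using the induced bracket of Proposition~\ref{pro:perm-per},
\[
[a\otimes b,\,a'\otimes b']=(a\diamond a')\otimes(bb')-(a'\diamond a)\otimes(b'b),
\]
so that $[\widehat r_{12},\widehat r_{13}]$, $[\widehat r_{13},\widehat r_{23}]$, $[\widehat r_{12},\widehat r_{23}]$ each split into two summands. Every summand is a sum over the two $A$-indices $i,k$ and two $B$-indices $j,l$ whose $A$-part is a single $\diamond$-product of two of $x_i,y_i,x_k,y_k$ placed in prescribed slots, and whose $B$-part is a perm product (one of $e_je_l,e_lf_j,f_jf_l,\dots$) tensored with the leftover basis/dual vectors. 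I would then apply the slide identity repeatedly to push each perm product off the paired co-form legs, turning the $B$-part of each summand into $\pm\kappa$ occupying two of the three tensor positions. Reading off the $A$-coefficients, the six summands should reassemble into the eight terms of $\mathbf{PL}_r$, the flipped copies $r_{21}$ being produced exactly by the rule $\tau(\kappa)=-\kappa$, modulo residual terms of the shape annihilated by the $(\fl_A,\ad_A)$-invariance of $r-\tau(r)$. Since $\mathbf{PL}_r=0$ by hypothesis, $\mathbf{C}_{\widehat r}=0$ follows.

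\textbf{Invariance of the symmetric part.} Using $\widehat r+\tau(\widehat r)=\widehat{\,r-\tau(r)\,}$, it suffices to show that the hat construction sends a skew, $(\fl_A,\ad_A)$-invariant element $s=\sum_m p_m\otimes q_m$ to an $\ad_{A\otimes B}$-invariant element. Writing $\ad_{A\otimes B}(a\otimes b)=\fl_A(a)\otimes L_b-\fr_A(a)\otimes R_b$ and regrouping the legs as $(A\otimes A)\otimes(B\otimes B)$, I would apply $\ad_{A\otimes B}(a\otimes b)\otimes\id+\id\otimes\ad_{A\otimes B}(a\otimes b)$ to $s\otimes\kappa$, move all $B$-operators onto a common leg of $\kappa$ by the slide identity, and use $\tau(s)=-s$ to align the $A$-operators. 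The surviving $A$-factor is precisely $\bigl(\fl_A(a)\otimes\id+\id\otimes(\fl_A-\fr_A)(a)\bigr)s$, which vanishes by the invariance hypothesis; so $\widehat r+\tau(\widehat r)$ is $\ad_{A\otimes B}$-invariant.

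\textbf{The main obstacle.} The delicate point in both parts is that the slide identity introduces the adjoint $L_b^{\dagger}$ of the \emph{left} multiplication which, unlike $R_b^{\dagger}=L_b-R_b$, is \emph{not} itself a multiplication operator of $(B,\cdot)$. I expect the $L_b^{\dagger}$-contributions to cancel only after being collected across all six summands (respectively the four terms of the invariance computation) and the skew-symmetries $\tau(\kappa)=-\kappa$, $\tau(s)=-s$ together with $L_{bc}=L_{cb}$ are invoked, so that only the controlled combination $L_b-R_b$ ultimately survives; verifying this cancellation, and checking that the residual $A$-level terms are exactly the $\PLYBE$ terms plus the invariance expression, is the real content of the calculation. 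A cleaner route that sidesteps $L_b^{\dagger}$ altogether would be to verify both assertions in the operator form via the factorization $\widehat r^{\sharp}=r^{\sharp}\otimes\kappa^{\sharp}$ displayed in the introduction, where the left-multiplication adjoint reappears harmlessly as a left multiplication on the dual; I would fall back on this formulation if the direct bookkeeping becomes unwieldy.
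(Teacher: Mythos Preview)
Your approach is essentially the paper's: both expand $\mathbf{C}_{\widehat r}$ via the induced bracket, use the nondegeneracy and invariance of $\omega$ to rewrite all $B$-tensors in terms of the two canonical ones $\sum_{p,q}(e_pe_q)\otimes f_p\otimes f_q$ and $\sum_{p,q}(e_qe_p)\otimes f_p\otimes f_q$ (your ``slide identity'' is exactly the mechanism the paper uses, just phrased abstractly), and then identify the $A$-coefficients as permutations of $\mathbf{PL}_r$ after invoking the $(\fl_A,\ad_A)$-invariance of $r-\tau(r)$. The invariance part likewise proceeds the same way.

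One correction that removes your ``main obstacle'': the left multiplication $L_b$ is \emph{self-adjoint} with respect to $\omega$. Indeed, from invariance and antisymmetry,
\[
\omega(bb_1,b_2)=\omega(b,\,b_1b_2-b_2b_1)=-\omega(bb_2,b_1)=\omega(b_1,bb_2),
\]
so $L_b^{\dagger}=L_b$. (The paper records this as $\varpi(b_1b_2,b_3)=\varpi(b_2,b_1b_3)$.) Hence every slide produces genuine multiplication operators, no exotic $L_b^{\dagger}$ ever appears, and the cancellation you were bracing for is unnecessary. With this in hand your outline goes through cleanly; the only refinement is that after reducing to the two canonical $B$-tensors the $A$-coefficient of \emph{each} is separately a permutation of $\mathbf{PL}_r$ (obtained by first using invariance to rewrite $\mathbf{PL}_r$ in two equivalent forms), rather than a single $\mathbf{PL}_r$ plus an invariance residual.
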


\begin{proof}
First, for any $1\leq p, q\leq n$, we have
\begin{align*}
&\;[\widehat{r}_{12},\; \widehat{r}_{13}]+[\widehat{r}_{12},\; \widehat{r}_{23}]
+[\widehat{r}_{13},\; \widehat{r}_{23}]\\
=&\;\sum_{i,j}\sum_{p,q}\big((x_{i}\diamond x_{j})\otimes y_{i}\otimes y_{j}\big)
\bullet\big((e_{p}e_{q})\otimes f_{p}\otimes f_{q}\big)
-\big((x_{j}\diamond x_{i})\otimes y_{i}\otimes y_{j}\big)
\bullet\big((e_{q}e_{p})\otimes f_{p}\otimes f_{q}\big)\\[-4mm]
&\qquad\qquad+\big(x_{i}\otimes(y_{i}\diamond x_{j})\otimes
y_{j}\big)\bullet\big(e_{p}\otimes(f_{p}e_{q})\otimes f_{q}\big)
-\big(x_{i}\otimes(x_{j}\diamond y_{i})\otimes
y_{j}\big)\bullet\big(e_{p}\otimes(e_{q}f_{p})\otimes f_{q}\big)\\[-1mm]
&\qquad\qquad+\big(x_{i}\otimes x_{j}\otimes(y_{i}\diamond y_{j})\big)\bullet
\big(e_{p}\otimes e_{q}\otimes(f_{p}f_{q})\big)
-\big(x_{i}\otimes x_{j}\otimes(y_{j}\diamond y_{i})\big)\bullet
\big(e_{p}\otimes e_{q}\otimes(f_{q}f_{p})\big).
\end{align*}
Moreover, for given $s, u, v\in\{1, 2,\cdots, n\}$, we have
\begin{align*}
\omega\Big(e_{s}\otimes e_{u}\otimes e_{v},\;
\sum_{p,q}(e_{p}e_{q})\otimes f_{p}\otimes f_{q}\Big)
&=\omega(e_{s},\; e_{u}e_{v}),\\[-2mm]
\omega\Big(e_{s}\otimes e_{u}\otimes e_{v},\;
\sum_{p,q}(e_{q}e_{p})\otimes f_{p}\otimes f_{q}\Big)
&=\omega(e_{s},\; e_{v}e_{u}),\\[-2mm]
\omega\Big(e_{s}\otimes e_{u}\otimes e_{v},\;
\sum_{p,q} e_{p}\otimes(f_{p}e_{q})\otimes f_{q}\Big)
&=\omega(e_{s},\; e_{v}e_{u}-e_{u}e_{v}).
\end{align*}
By the nondegeneracy of $\omega(-,-)$, we get
$$
\sum_{p,q}e_{p}\otimes(f_{p}e_{q})\otimes f_{q}
=\sum_{p,q}\big((e_{q}e_{p})\otimes f_{p}\otimes f_{q}
-(e_{p}e_{q})\otimes f_{p}\otimes f_{q}\big).
$$
Similarly, we also have $\sum_{p,q}e_{p}\otimes e_{q}\otimes(f_{p}f_{q})
=\sum_{p,q}\big((e_{q}e_{p})\otimes f_{p}\otimes f_{q}
-(e_{p}e_{q})\otimes f_{p}\otimes f_{q}\big)$,
$\sum_{p,q}e_{p}\otimes(e_{q}f_{p})\otimes f_{q}
=\sum_{p,q}(e_{q}e_{p})\otimes f_{p}\otimes f_{q}$ and
$\sum_{p,q}e_{p}\otimes e_{q}\otimes(f_{q}f_{p})
=-\sum_{p,q}(e_{p}e_{q})\otimes f_{p}\otimes f_{q}$.
Thus, we obtain
\begin{align*}
&\;[\widehat{r}_{12},\; \widehat{r}_{13}]+[\widehat{r}_{12},\; \widehat{r}_{23}]
+[\widehat{r}_{13},\; \widehat{r}_{23}]\\
=&\;\sum_{i,j}\sum_{p,q}\Big((x_{i}\diamond x_{j})\otimes y_{i}\otimes y_{j}
-x_{i}\otimes(y_{i}\diamond x_{j})\otimes y_{j}
-x_{i}\otimes x_{j}\otimes(y_{i}\diamond y_{j})\\[-5mm]
&\qquad\qquad\qquad\qquad\qquad\qquad+x_{i}\otimes x_{j}\otimes(y_{j}\diamond y_{i})
\Big)\bullet\big((e_{p}e_{q})\otimes f_{p}\otimes f_{q}\big)\\[-2mm]
&\quad-\Big((x_{j}\diamond x_{i})\otimes y_{i}\otimes y_{j}
-x_{i}\otimes(y_{i}\diamond x_{j})\otimes y_{j}
+x_{i}\otimes(x_{j}\diamond y_{i})\otimes y_{j}\\[-2mm]
&\qquad\qquad\qquad\qquad\qquad\qquad-x_{i}\otimes x_{j}\otimes(y_{i}\diamond y_{j})
\Big)\bullet\big((e_{q}e_{p})\otimes f_{p}\otimes f_{q}\big).
\end{align*}
Since $r-\tau(r)$ is $(\fl_{A},\ad_{A})$-invariant, we get
$\mathbf{PL}_{r}=\sum_{i,j}\big((y_{i}\diamond x_{j})\otimes x_{i}\otimes y_{j}
+x_{j}\otimes x_{i}\otimes(y_{i}\diamond y_{j})-x_{i}\otimes x_{j}\otimes(y_{i}\diamond y_{j})
-y_{j}\otimes(x_{j}\diamond x_{i})\otimes y_{i}\big)
=\sum_{i,j}\Big((y_{j}\diamond y_{i})\otimes x_{j}\otimes x_{i}
+y_{i}\otimes x_{j}\otimes(y_{j}\diamond x_{i})-y_{i}\otimes x_{j}\otimes(x_{i}\diamond y_{j})
-y_{j}\otimes(x_{j}\diamond x_{i})\otimes y_{i}\Big)$, and so that
\begin{align*}
-(\tau\otimes\id)(\mathbf{PL}_{r})
&=\sum_{i,j}\Big((x_{i}\diamond x_{j})\otimes y_{i}\otimes y_{j}
-x_{i}\otimes(y_{i}\diamond x_{j})\otimes y_{j}\\[-5mm]
&\qquad\quad-x_{i}\otimes x_{j}\otimes(y_{i}\diamond y_{j})
+x_{i}\otimes x_{j}\otimes(y_{j}\diamond y_{i})\Big),\\
(\id\otimes\tau)(\tau\otimes\id)(\mathbf{PL}_{r})
&=\sum_{i,j}\Big(x_{j}\otimes x_{i}\otimes(y_{j}\diamond y_{i})
+x_{j}\otimes(y_{j}\diamond x_{i})\otimes y_{i}\\[-5mm]
&\qquad\quad-x_{j}\otimes(x_{i}\diamond y_{j})\otimes y_{i}
-(x_{j}\diamond x_{i})\otimes y_{i}\otimes y_{j}\Big).
\end{align*}
Thus, $[\widehat{r}_{12},\; \widehat{r}_{13}]+[\widehat{r}_{12},\; \widehat{r}_{23}]
+[\widehat{r}_{13},\; \widehat{r}_{23}]=0$, i.e., $\widehat{r}$ is a solution of
the $\CYBE$ in $(A\otimes B, [-,-])$ if $r$ is a solution of the $\PLYBE$ in $(A, \diamond)$.

Second, we show that $\widehat{r}+\tau(\widehat{r})$ is $\ad_{A\otimes B}$-invariant.
For any $a\in A$, $e_{p}\in B$, $p=1, 2,\cdots, n$, we have
\begin{align*}
&\big(\id\otimes\ad_{A\otimes B}(a\otimes e_{p})
+\ad_{A\otimes B}(a\otimes e_{p})\otimes\id\big)(\widehat{r}+\tau(\widehat{r}))\\
=&\;\sum_{i, j}\Big((x_{i}\otimes e_{j})\otimes[a\otimes e_{p},\; y_{i}\otimes f_{j}]
+[a\otimes e_{p},\; x_{i}\otimes e_{j}]\otimes(y_{i}\otimes f_{j})\\[-5mm]
&\qquad\quad+(y_{i}\otimes f_{j})\otimes[a\otimes e_{p},\; x_{i}\otimes e_{j}]
+[a\otimes e_{p},\; y_{i}\otimes f_{j}]\otimes(x_{i}\otimes e_{j})\Big)\\
=&\;\sum_{i,j}\Big(\big(x_{i}\otimes(a\diamond y_{i})\big)\bullet
\big(e_{j}\otimes(e_{p}f_{j})\big)-\big(x_{i}\otimes(y_{i}\diamond a)\big)\bullet
\big(e_{j}\otimes(f_{j}e_{p})\big)\\[-5mm]
&\qquad+\big((a\diamond x_{i})\otimes y_{i}\big)\bullet
\big((e_{p}e_{j})\otimes f_{j}\big)-\big((x_{i}\diamond a)\otimes y_{i}\big)\bullet
\big((e_{j}e_{p})\otimes f_{j}\big)\\[-1mm]
&\qquad+\big(y_{i}\otimes(a\diamond x_{i})\big)\bullet
\big(f_{j}\otimes(e_{p}e_{j})\big)-\big(y_{i}\otimes(x_{i}\diamond a)\big)\bullet
\big(f_{j}\otimes(e_{j}e_{p})\big)\\[-1mm]
&\qquad+\big((a\diamond y_{i})\otimes x_{i}\big)\bullet
\big((e_{p}f_{j})\otimes e_{j}\big)-\big((y_{i}\diamond a)\otimes x_{i}\big)\bullet
\big((f_{j}e_{p})\otimes e_{j}\big).
\end{align*}
For given $s, t\in\{1, 2,\cdots, n\}$, since $\omega(-,-)$ is invariant, we have
\begin{align*}
&\qquad\qquad\qquad\qquad\omega\Big(e_{s}\otimes e_{t},\; \sum_{j}e_{j}\otimes
(f_{j}e_{p})\Big)=\omega(e_{s},\; e_{p}e_{t}-e_{t}e_{p}),\\[-3mm]
&\omega\Big(e_{s}\otimes e_{t},\; \sum_{j}(e_{p}e_{j})\otimes f_{j}\Big)
=\omega(e_{s},\; e_{p}e_{t}),\qquad
\omega\Big(e_{s}\otimes e_{t},\; \sum_{j}(e_{j}e_{p})\otimes f_{j}\Big)
=\omega(e_{s},\; e_{t}e_{p}).
\end{align*}
Thus, $\sum_{j}(e_{p}e_{j})\otimes f_{j}=\sum_{j}(e_{j}e_{p})\otimes f_{j}
+\sum_{j}e_{j}\otimes(f_{j}e_{p})$ since $\omega(-,-)$ is nondegenerate.
Similarly, we also have
\begin{align*}
&\qquad\sum_{j}(f_{j}e_{p})\otimes e_{j}
=-\sum_{j}(e_{j}e_{p})\otimes f_{j},\qquad\quad
\sum_{j}f_{j}\otimes(e_{j}e_{p})
=-\sum_{j}e_{j}\otimes(f_{j}e_{p}),\\[-2mm]
&\sum_{j}(e_{p}e_{j})\otimes f_{j}=-\sum_{j}(e_{p}f_{j})\otimes e_{j}
=\sum_{j}-f_{j}\otimes(e_{p}e_{j})
=\sum_{j}(e_{j}e_{p})\otimes f_{j}+\sum_{j}e_{j}\otimes(f_{j}e_{p}).
\end{align*}
Therefore, we obtain
\begin{align*}
&\big(\id\otimes\ad_{A\otimes B}(a\otimes e_{p})
+\ad_{A\otimes B}(a\otimes e_{p})\otimes\id\big)(\widehat{r}+\tau(\widehat{r}))\\
=&\;\sum_{i,j}\Big(\Big(x_{i}\otimes(a\diamond y_{i})+(a\diamond x_{i})\otimes y_{i}
-y_{i}\otimes(a\diamond x_{i})-(a\diamond y_{i})\otimes x_{i}\\[-5mm]
&\qquad\qquad\qquad\qquad-x_{i}\otimes(y_{i}\diamond a)+y_{i}\otimes(x_{i}\diamond a)
\Big)\bullet\big(e_{j}\otimes(f_{j}e_{p})\big)\\[-2mm]
&\quad+\Big(x_{i}\otimes(a\diamond y_{i})+(a\diamond x_{i})\otimes y_{i}
-y_{i}\otimes(a\diamond x_{i})-(a\diamond y_{i})\otimes x_{i}\\[-2mm]
&\qquad\qquad\qquad\qquad-(x_{i}\diamond a)\otimes y_{i}+(y_{i}\diamond a)\otimes x_{i}
\Big)\bullet\big((e_{p}e_{j})\otimes f_{j}\big)\Big).
\end{align*}
Since $r-\tau(r)$ is $(\fl_{A},\ad_{A})$-invariant, i.e.,
$\sum_{i}(a\diamond x_{i})\otimes y_{i}+x_{i}\otimes(a\diamond y_{i})
-x_{i}\otimes(y_{i}\diamond a)-(a\diamond y_{i})\otimes x_{i}-y_{i}\otimes(a\diamond x_{i})
+y_{i}\otimes(x_{i}\diamond a)=0$ for any $a\in A$, we get
$\big(\id\otimes\ad_{A\otimes B}(a\otimes e_{p})
+\ad_{A\otimes B}(a\otimes e_{p})\otimes\id\big)(\widehat{r}+\tau(\widehat{r}))=0$, i.e.,
$\widehat{r}+\tau(\widehat{r})$ is $\ad_{A\otimes B}$-invariant. The proof is finished.
\end{proof}

For the special case where $r$ is symmetric, we have the following corollary directly.

\begin{cor}[\cite{LZB}]\label{cor:sPLYBE-sCYBE}
Let $(A, \diamond)$ be a pre-Lie algebra, $(B, \cdot, \omega)$ be a quadratic perm
algebra, and $(A\otimes B, [-,-])$ be the induced Lie algebra.
If $r=\sum_{i}x_{i}\otimes y_{i}\in A\otimes A$ is a symmetric solution of the
$\PLYBE$ in $(A, \diamond)$, then
$$
\widehat{r}:=\sum_{i, j}(x_{i}\otimes e_{j})\otimes(y_{i}\otimes f_{j})
\in(A\otimes B)\otimes(A\otimes B)
$$
is a skew-symmetric solution of the $\CYBE$ in $(A\otimes B, [-,-])$.
\end{cor}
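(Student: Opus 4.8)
The plan is to obtain this as a direct specialization of Proposition~\ref{pro:PLYBE-CYBE}, supplemented by one elementary symmetry observation. First I would note that if $r$ is symmetric then $r-\tau(r)=0$, and the zero element is vacuously $(\fl_{A},\ad_{A})$-invariant. Hence the hypotheses of Proposition~\ref{pro:PLYBE-CYBE} are met, and that proposition yields at once that $\widehat{r}=\sum_{i,j}(x_{i}\otimes e_{j})\otimes(y_{i}\otimes f_{j})$ is a solution of the $\CYBE$ in $(A\otimes B,[-,-])$. So the only remaining task is to check that $\widehat{r}$ is skew-symmetric.

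For this, the key point is that the canonical element $\kappa:=\sum_{j}e_{j}\otimes f_{j}\in B\otimes B$ attached to the dual bases is itself skew-symmetric, i.e. $\sum_{j}f_{j}\otimes e_{j}=-\sum_{j}e_{j}\otimes f_{j}$. I would prove this by a short computation: writing $\Omega_{ik}:=\omega(e_{i},e_{k})$ and expanding each $f_{j}$ in the basis $\{e_{k}\}$, the defining relation $\omega(e_{i},f_{j})=\delta_{ij}$ pins down the expansion coefficients as (a transpose of) $\Omega^{-1}$; the antisymmetry $\Omega^{\mathrm{T}}=-\Omega$ of the form then forces $\tau(\kappa)=-\kappa$. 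Equivalently, $\kappa$ is the copairing dual to the nondegenerate antisymmetric form $\omega$, and such a copairing is always skew-symmetric.

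Finally I would assemble the two symmetries. Since $\widehat{r}$ is the interleaving of $r=\sum_{i}x_{i}\otimes y_{i}\in A\otimes A$ with $\kappa\in B\otimes B$, the flip $\tau$ on $(A\otimes B)^{\otimes 2}$ acts by simultaneously flipping the two $A$-factors and the two $B$-factors, so that $\tau(\widehat{r})=\tau_{A}(r)\bullet\tau_{B}(\kappa)$, where $\tau_{A}$ and $\tau_{B}$ denote the flips on $A\otimes A$ and $B\otimes B$. Using symmetry of $r$ (so $\tau_{A}(r)=r$) and skew-symmetry of $\kappa$ (so $\tau_{B}(\kappa)=-\kappa$) gives $\tau(\widehat{r})=r\bullet(-\kappa)=-\widehat{r}$, which is precisely the asserted skew-symmetry. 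There is no genuine obstacle here, since the corollary is a true specialization of Proposition~\ref{pro:PLYBE-CYBE}; the only nontrivial ingredient is the skew-symmetry of $\kappa$, and even that is a one-line consequence of the antisymmetry and nondegeneracy of $\omega$.
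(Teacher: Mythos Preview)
Your proposal is correct and follows essentially the same route as the paper: both invoke Proposition~\ref{pro:PLYBE-CYBE} after noting that $r-\tau(r)=0$ is trivially $(\fl_{A},\ad_{A})$-invariant, and both then establish skew-symmetry of $\widehat{r}$ by showing $\sum_{j}e_{j}\otimes f_{j}=-\sum_{j}f_{j}\otimes e_{j}$ from the antisymmetry and nondegeneracy of $\omega$. The only cosmetic difference is that the paper checks this last identity by pairing against $e_{s}\otimes e_{t}$ via the extended form, whereas you phrase it as a matrix/copairing argument; the content is the same.
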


\begin{proof}
Since $r$ is symmetric, we get $r-\tau(r)=0$ is $(\fl_{A},\ad_{A})$-invariant,
and so that $\widehat{r}$ is a solution of the $\CYBE$ in $(A\otimes B, [-,-])$
by the first part of the proof of Proposition \ref{pro:PLYBE-CYBE}.  Moreover,
for any $s, t\in\{1, 2,\cdots, n\}$, we have
$$
\omega\Big(e_{s}\otimes e_{t},\; \sum_{j}e_{j}\otimes f_{j}\Big)
=\omega(e_{s}, e_{t})=-\omega(e_{t}, e_{s})
=-\omega\Big(e_{s}\otimes e_{t},\; \sum_{j}f_{j}\otimes e_{j}\Big).
$$
The nondegeneracy of $\omega(-,-)$ yields that $\sum_{j}e_{j}\otimes f_{j}
=-\sum_{j}f_{j}\otimes e_{j}$. Since $r$ is symmetric, we get $\widehat{r}$
is skew-symmetric. The proof is finished.
\end{proof}

We give the main conclusion of this section.

\begin{thm}\label{thm:indu-sLiebia}
Let $(A, \diamond, \vartheta)$ be a pre-Lie bialgebra, $(B, \cdot, \omega)$
be a quadratic perm algebra, and $(A\otimes B, [-,-], \delta)$ be the
induced Lie bialgebra by $(A, \diamond, \vartheta)$ and $(B, \cdot, \omega)$.
If $r\in A\otimes A$ such that $r-\tau(r)$ is $(\fl_{A},\ad_{A})$-invariant and
$\vartheta=\vartheta_{r}$ is defined by Eq. \eqref{preli-cobo}, then $(A\otimes B,
[-,-], \delta)=(A\otimes B, [-,-], \delta_{\widehat{r}})$ as Lie bialgebras,
where $\delta$ is defined by Eq. \eqref{pl-pm} and $\widehat{r}$ is defined
by Eq. \eqref{r-max}. Therefore, we have
\begin{enumerate}\itemsep=0pt
\item[$(i)$] $(A\otimes B, [-,-], \delta)$ is quasi-triangular if
     $(A, \diamond, \vartheta)$ is quasi-triangular;
\item[$(ii)$] $(A\otimes B, [-,-], \delta)$ is triangular if
     $(A, \diamond, \vartheta)$ is triangular;
\item[$(iii)$] $(A\otimes B, [-,-], \delta)$ is factorizable if
     $(A, \diamond, \vartheta)$ is factorizable.
\end{enumerate}
\end{thm}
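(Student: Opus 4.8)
The plan is to reduce everything to the single identity $\delta=\delta_{\widehat r}$ of cobrackets on $A\otimes B$, after which parts $(i)$--$(iii)$ follow from the results already in hand. Since both maps are linear, it suffices to check agreement on an arbitrary element $a\otimes b$ with $a\in A$ and $b\in B$. Substituting $\vartheta=\vartheta_r$ into Eq.~\eqref{pl-pm} and writing $r=\sum_i x_i\otimes y_i$, Eq.~\eqref{preli-cobo} gives
\[
\vartheta_r(a)=\sum_i\big((a\diamond x_i)\otimes y_i+x_i\otimes(a\diamond y_i)-x_i\otimes(y_i\diamond a)\big),
\]
so that $\delta(a\otimes b)=(\id\otimes\id-\tau)\big(\vartheta_r(a)\bullet\nu_\omega(b)\big)$ expands into a sum indexed by $i$ and by the Sweedler components of $\nu_\omega(b)$.

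On the other side, using the bracket of Proposition~\ref{pro:perm-per} I would expand
\[
\delta_{\widehat r}(a\otimes b)=\big(\id\otimes\ad_{A\otimes B}(a\otimes b)+\ad_{A\otimes B}(a\otimes b)\otimes\id\big)(\widehat r),
\]
with $\widehat r=\sum_{i,j}(x_i\otimes e_j)\otimes(y_i\otimes f_j)$; here every term carries a one-sided product $b e_j$, $e_j b$, $b f_j$ or $f_j b$ in the $B$-slot. The two expansions are then matched by exactly the perm-coalgebra identities already derived in the proof of Proposition~\ref{pro:PLYBE-CYBE} from the invariance and nondegeneracy of $\omega$ (for instance $\sum_j(be_j)\otimes f_j=\sum_j(e_jb)\otimes f_j+\sum_j e_j\otimes(f_jb)$), which convert the $\nu_\omega(b)$-components occurring in $\delta$ into the one-sided products occurring in $\delta_{\widehat r}$. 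This term-by-term comparison is the technical heart of the argument and the main obstacle; carrying it out yields $\delta(a\otimes b)=\delta_{\widehat r}(a\otimes b)$. Since $\delta$ is a Lie cobracket by Theorem~\ref{thm:pre-lie}, the triple $(A\otimes B,[-,-],\delta_{\widehat r})$ is a coboundary Lie bialgebra.

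The three assertions now follow quickly. For $(i)$, if $(A,\diamond,\vartheta)$ is quasi-triangular then $r$ solves the $\PLYBE$ in $(A,\diamond)$ and $r-\tau(r)$ is $(\fl_A,\ad_A)$-invariant; Proposition~\ref{pro:PLYBE-CYBE} shows that $\widehat r$ is then a solution of the $\CYBE$ in $(A\otimes B,[-,-])$ with $\widehat r+\tau(\widehat r)$ being $\ad_{A\otimes B}$-invariant, so $(A\otimes B,[-,-],\delta_{\widehat r})$ is quasi-triangular by Proposition~\ref{pro:splie-bia}$(i)$. For $(ii)$, if $(A,\diamond,\vartheta)$ is triangular then $r$ is a symmetric solution of the $\PLYBE$, whence $\widehat r$ is a skew-symmetric solution of the $\CYBE$ by Corollary~\ref{cor:sPLYBE-sCYBE}, and triangularity follows from Proposition~\ref{pro:splie-bia}$(ii)$.

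For $(iii)$ I must check that the factorizability isomorphism is inherited. Under the canonical identification $(A\otimes B)\otimes(A\otimes B)\cong(A\otimes A)\otimes(B\otimes B)$ one has $\widehat r\leftrightarrow r\otimes\kappa$ with $\kappa=\sum_j e_j\otimes f_j$, while $\tau$ corresponds to the flips on both factors; combined with the skew-symmetry $\tau(\kappa)=-\kappa$ established in Corollary~\ref{cor:sPLYBE-sCYBE}, this gives $\widehat r+\tau(\widehat r)\leftrightarrow(r-\tau(r))\otimes\kappa$. Passing to $\sharp$ yields $\widehat r^\sharp+\tau(\widehat r)^\sharp=(r^\sharp-\tau(r)^\sharp)\otimes\kappa^\sharp$, where $\kappa^\sharp:B^\ast\to B$ is the isomorphism induced by the nondegenerate form $\omega$. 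Hence $\widehat r^\sharp+\tau(\widehat r)^\sharp$ is an isomorphism precisely when $r^\sharp-\tau(r)^\sharp$ is, so $(A\otimes B,[-,-],\delta_{\widehat r})$ is factorizable whenever $(A,\diamond,\vartheta)$ is. This completes the proof.
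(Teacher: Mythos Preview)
Your overall strategy matches the paper's proof: first establish $\delta=\delta_{\widehat r}$ on $A\otimes B$, then invoke Proposition~\ref{pro:PLYBE-CYBE} and Corollary~\ref{cor:sPLYBE-sCYBE} for $(i)$ and $(ii)$, and handle $(iii)$ via $\widehat r^{\sharp}+\tau(\widehat r)^{\sharp}=(r^{\sharp}-\tau(r)^{\sharp})\otimes\kappa^{\sharp}$. Parts $(i)$--$(iii)$ are fine as written.

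There is, however, a gap in the first step. You write that the perm-coalgebra identities from Proposition~\ref{pro:PLYBE-CYBE} ``convert the $\nu_\omega(b)$-components occurring in $\delta$ into the one-sided products occurring in $\delta_{\widehat r}$'' and that the term-by-term comparison then \emph{yields} $\delta=\delta_{\widehat r}$. It does not: the $B$-side identities alone are not enough. When you actually carry out the computation (as the paper does), the identities $\sum_{(b)}b_{(1)}\otimes b_{(2)}=\sum_j e_j\otimes(f_jb)$, $\sum_{(b)}b_{(2)}\otimes b_{(1)}=-\sum_j(e_jb)\otimes f_j$, and $\sum_j(be_j)\otimes f_j=\sum_j e_j\otimes(bf_j)=\sum_{(b)}(b_{(1)}\otimes b_{(2)}-b_{(2)}\otimes b_{(1)})$ only reduce the difference to
\[
\delta(a\otimes b)-\delta_{\widehat r}(a\otimes b)
=\sum_i\sum_{(b)}\Big(\;\cdots\;\Big)\bullet\big(b_{(2)}\otimes b_{(1)}\big),
\]
where the $A$-tensor in parentheses is exactly (the flip of) the $(\fl_A,\ad_A)$-invariance condition on $r-\tau(r)$ evaluated at $a$. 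So the hypothesis that $r-\tau(r)$ is $(\fl_A,\ad_A)$-invariant is precisely what makes this residual term vanish; without naming it, your argument for $\delta=\delta_{\widehat r}$ is incomplete. You should make explicit that this is where that hypothesis is consumed.
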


\begin{proof}
Let $r=\sum_{i}x_{i}\otimes y_{i}$. First, for any $a\in A$ and $b\in B$, we have
\begin{align*}
\delta(a\otimes b)&=\sum_{i}\sum_{(b)}\Big(((a\diamond x_{i})\otimes y_{i})\bullet
(b_{(1)}\otimes b_{(2)})+(x_{i}\otimes(a\diamond y_{i}))\bullet(b_{(1)}\otimes b_{(2)})\\[-5mm]
&\qquad\qquad\quad-(x_{i}\otimes(y_{i}\diamond a))\bullet(b_{(1)}\otimes b_{(2)})
-(y_{i}\otimes(a\diamond x_{i}))\bullet(b_{(2)}\otimes b_{(1)})\\[-2mm]
&\qquad\qquad\quad-((a\diamond y_{i})\otimes x_{i})\bullet(b_{(2)}\otimes b_{(1)})
+((y_{i}\diamond a)\otimes x_{i})\bullet(b_{(2)}\otimes b_{(1)})\Big),
\end{align*}
where $\vartheta_{r}(a)=\big(\fl_{A}(a)\otimes\id+\id\otimes(\fl_{A}-\fr_{A})(a)\big)(r)
=\sum_{i}\big((a\diamond x_{i})\otimes y_{i}+x_{i}\otimes(a\diamond y_{i})
-x_{i}\otimes(y_{i}\diamond a)\big)$ and $\nu_{\omega}(b)=\sum_{(b)}b_{(1)}
\otimes b_{(2)}$. Suppose that $\{e_{1}, e_{2},\cdots, e_{n}\}$ is a basis
of $B$ and $\{f_{1}, f_{2},\cdots, f_{n}\}$ is the dual basis of $\{e_{1}, e_{2},
\cdots, e_{n}\}$ with respect to $\omega(-,-)$, i.e., $\omega(e_{i}, f_{j})=\delta_{ij}$,
where $\delta_{ij}$ is the Kronecker delta. Then
\begin{align*}
\delta_{\widehat{r}}(a\otimes b)&=(\id\otimes\ad_{A\otimes B}(a\otimes b)
+\ad_{A\otimes B}(a\otimes b)\otimes\id)(\widehat{r})\\
&=\sum_{i, j}\Big((x_{i}\otimes(a\diamond y_{i}))\bullet(e_{j}\otimes(bf_{j}))
-(x_{i}\otimes(y_{i}\diamond a))\bullet(e_{j}\otimes(f_{j}b))\\[-5mm]
&\qquad\quad+((a\diamond x_{i})\otimes y_{i})\bullet((be_{j})\otimes f_{j})
-((x_{i}\diamond a)\otimes y_{i})\bullet((e_{j}b)\otimes f_{j})\Big),
\end{align*}
where $\widehat{r}=\sum_{i, j}(x_{i}\otimes e_{j})\otimes(y_{i}\otimes f_{j})$.
For two basis elements $e_{s}, e_{t}\in B$, since
$$
\omega\Big(e_{s}\otimes e_{t},\; \sum_{(b)}b_{(1)}\otimes b_{(2)}\Big)
=\omega(e_{s},\; be_{t}-e_{t}b)
=\omega\Big(e_{s}\otimes e_{t},\; \sum_{j}e_{j}\otimes(f_{j}b)\Big),
$$
and $\omega(-,-)$ is nondegenerate, we get $\sum_{(b)}b_{(1)}\otimes b_{(2)}
=\sum_{j}e_{j}\otimes(f_{j}b)$. Similarly, we also have
$\sum_{(b)}b_{(2)}\otimes b_{(1)}=-\sum_{j}(e_{j}b)\otimes f_{j}$ and
$\sum_{j}(be_{j})\otimes f_{j}=\sum_{j}e_{j}\otimes (bf_{j})
=\sum_{(b)}\big(b_{(1)}\otimes b_{(2)}-b_{(2)}\otimes b_{(1)}\big)$.
Thus,
\begin{align*}
\delta(a\otimes b)-\delta_{\widehat{r}}(a\otimes b)
=&\;\sum_{i}\sum_{(b)}\Big((y_{i}\diamond a)\otimes x_{i}
+x_{i}\otimes(a\diamond y_{i})-y_{i}\otimes(a\diamond x_{i})\\[-5mm]
&\qquad\qquad
-(a\diamond y_{i})\otimes x_{i}+(a\diamond x_{i})\otimes y_{i}
-(x_{i}\diamond a)\otimes y_{i}\Big)\bullet(b_{(2)}\otimes b_{(1)}).
\end{align*}
If $r-\tau(r)$ is $(\fl_{A},\ad_{A})$-invariant, i.e.,
$\sum_{i}(a\diamond x_{i})\otimes y_{i}+x_{i}\otimes(a\diamond y_{i})
-x_{i}\otimes(y_{i}\diamond a)-(a\diamond y_{i})\otimes x_{i}-y_{i}\otimes(a\diamond x_{i})
+y_{i}\otimes(x_{i}\diamond a)=0$ for any $a\in A$, we get $\delta(a\otimes b)
=\delta_{\widehat{r}}(a\otimes b)$ for any $a\in A$ and $b\in B$. Therefore,
$(A\otimes B, [-,-], \delta)=(A\otimes B, [-,-], \delta_{\widehat{r}})$ as Lie bialgebras.

Second, $(i)$ and $(ii)$ follow from Proposition \ref{pro:PLYBE-CYBE} and Corollary
\ref{cor:sPLYBE-sCYBE}. Finally, if $(A, \diamond, \vartheta_{r})$ is factorizable,
i.e., $r$ is a solution of $\PLYBE$ in $(A, \diamond)$, $r-\tau(r)$ is
$(\fl_{A},\ad_{A})$-invariant, and the map $\mathcal{I}=r^{\sharp}-\tau(r)^{\sharp}:
A^{\ast}\rightarrow A$ is an isomorphism of vector spaces.
We need show that $\widehat{\mathcal{I}}=\widehat{r}^{\sharp}
+\tau(\widehat{r})^{\sharp}: (A\otimes B)^{\ast}\rightarrow A\otimes B$ is an
isomorphism of vector spaces. Denote $\kappa:=\sum_{j}e_{j}\otimes
f_{j}\in B\otimes B$. Define $\kappa^{\sharp}: B^{\ast}\rightarrow B$ by
$\langle\eta_{2},\; \kappa^{\sharp}(\eta_{1})\rangle=\langle\eta_{1}
\otimes\eta_{2},\; \kappa\rangle$, for any $\eta_{1}, \eta_{2}\in B^{\ast}$.
Then, one can check that $\kappa^{\sharp}$ is
a linear isomorphism and $\langle\eta_{2},\; \kappa^{\sharp}(\eta_{1})\rangle
=\langle\eta_{1}\otimes\eta_{2},\; \kappa\rangle
=-\langle\eta_{2}\otimes\eta_{1},\; \kappa\rangle
=-\langle\eta_{1},\; \kappa^{\sharp}(\eta_{2})\rangle$.
Therefore, for any $\xi_{1}, \xi_{2}\in A^{\ast}$ and $\eta_{1}, \eta_{2}\in B^{\ast}$,
\begin{align*}
\langle\xi_{2}\otimes\eta_{2},\; \tau(\widehat{r})^{\sharp}(\xi_{1}\otimes\eta_{1})\rangle
&=\sum_{i,j}\langle(\xi_{2}\otimes\eta_{2})\otimes(\xi_{1}\otimes\eta_{1}),\ \
(x_{i}\otimes e_{j})\otimes(y_{i}\otimes f_{j})\rangle\\[-2mm]
&=\Big(\sum_{i}\langle\xi_{2}, x_{i}\rangle\langle\xi_{1}, y_{i}\rangle\Big)
\Big(\sum_{j}\langle\eta_{1}, f_{j}\rangle\langle\eta_{2}, e_{j}\rangle\Big)\\[-2mm]
&=-\langle\xi_{2},\; \tau(r)^{\sharp}(\xi_{1})\rangle
\langle\eta_{1},\; \kappa^{\sharp}(\eta_{2})\rangle\\
&=-\langle\xi_{2}\otimes\eta_{2},\; \tau(r)^{\sharp}(\xi_{1})\otimes
\kappa^{\sharp}(\eta_{1})\rangle.
\end{align*}
That is, $\tau(\widehat{r})^{\sharp}=-\tau(r)^{\sharp}\otimes\kappa^{\sharp}$.
Similarly, we have $\widehat{r}^{\sharp}=r^{\sharp}\otimes\kappa^{\sharp}$. Thus,
$\widehat{\mathcal{I}}=\widehat{r}^{\sharp}+\widehat{r}^{\sharp}
=(r^{\sharp}-r^{\sharp})\otimes\kappa^{\sharp}=\mathcal{I}\otimes\kappa^{\sharp}$
is an isomorphism of vector spaces. Thus, $(A\otimes B, [-,-], \delta)$ is a factorizable
Lie bialgebra if $(A, \diamond, \vartheta)$ is a factorizable pre-Lie bialgebra.
The proof is finished.
\end{proof}

\begin{rmk}\label{rmk:indu-Liebia}
The conclusion $(ii)$ in Theorem \ref{thm:indu-sLiebia} is a special case of $(i)$,
which has been considered in \cite{LZB}. Moreover, the notions of completed Lie
bialgebras and completed solutions of the $\CYBE$ are introduced in \cite{LZB}.
In Theorem \ref{thm:indu-sLiebia}, if we replace the quadratic perm algebra
$(B, \cdot, \omega)$ with a quadratic $\bz$-graded perm algebra $(B, \cdot, \varpi)$
(see Definition \ref{def:zgrad-alg}), then $(A\otimes B, [-,-], \delta)
=(A\otimes B, [-,-], \delta_{\widehat{r}})$ as completed Lie bialgebras.
Similar to the proof of Theorem \ref{thm:indu-sLiebia}, one can show that
the completed Lie bialgebra $(A\otimes B, [-,-], \delta)$ is quasi-triangular
if $(A, \diamond, \vartheta)$ is quasi-triangular.
\end{rmk}

Let $(A, \diamond)$ be a pre-Lie algebra and $r\in A\otimes A$. If $r$ is a solution of
the $\PLYBE$ in $(A, \diamond)$ such that $r-\tau(r)$ is $(\fl_{A},\ad_{A})$-invariant,
then the Theorem \ref{thm:indu-sLiebia} also gives the following commutative diagram:

$$
\xymatrix@C=3cm@R=0.5cm{
\txt{$r$ \\ {\tiny a solution of the $\PLYBE$}\\ {\tiny in $(A, \diamond)$}}
\ar[d]_{{\rm Pro.}~\ref{pro:PLYBE-CYBE}}\ar[r]^{{\rm Pro.}~\ref{pro:spec-plbia}} &
\txt{$(A, \diamond, \vartheta_{r})$ \\ {\tiny a pre-Lie bialgebra}}
\ar[d]^{{\rm Thm.}~\ref{thm:pre-lie}}_{{\rm Thm.}~\ref{thm:indu-sLiebia}} \\
\txt{$\widehat{r}$ \\ {\tiny a solution of the $\CYBE$}\\ {\tiny in $(A\otimes B, [-,-])$}}
\ar[r]^{{\rm Pro.}~\ref{pro:splie-bia}\qquad\quad} &
\txt{$(A\otimes B, [-,-], \delta)=(A\otimes B, [-,-], \delta_{\widehat{r}})$ \\
{\tiny a Lie bialgebra}}}
$$

The $\mathcal{O}$-operator is considered to be the operator form of the solution
of the Yang-Baxter equation. We now consider the relationship between the
$\mathcal{O}$-operators of pre-Lie algebra and the induced Lie algebra.
Let $(\g, [-,-])$ be a Lie algebra and $(V, \rho)$ be a representation of $(\g, [-,-])$.
Recall that a linear map $P: V\rightarrow\g$ is called an {\bf $\mathcal{O}$-operator
of $(\g, [-,-])$ associated to $(V, \rho)$} if for any $v_{1}, v_{2}\in V$,
$$
[P(v_{1}), P(v_{2})]=P\big(\rho(P(v_{1}))(v_{2})-\rho(P(v_{2}))(v_{1})\big).
$$
The notion of $\mathcal{O}$-operator of Lie algebras was introduced by
Kupershmidt in \cite{Kup}, which is considered to be the operator form of
solution of $\CYBE$ in $(\g, [-,-])$.

\begin{pro}[\cite{Kup,CP}]\label{pro:o-lie}
Let $(\g, [-,-])$ be a Lie algebra, $r\in\g\otimes\g$ be skew-symmetric.
Then $r$ is a solution of the $\CYBE$ in $(\g, [-,-])$ if and only if $r^{\sharp}$
is an $\mathcal{O}$-operator of $(\g, [-,-])$ associated to the coadjoint
representation $(\g^{\ast}, -\ad_{\g}^{\ast})$, where $\ad_{\g}(g_{1})(g_{2})
=[g_{1}, g_{12}]$ for any $g_{1}, g_{2}\in\g$.
\end{pro}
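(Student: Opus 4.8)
The plan is to prove the equivalence by contracting both the $\CYBE$ and the $\mathcal{O}$-operator identity against test functionals and matching the resulting scalar expressions term by term. Write $r=\sum_i x_i\otimes y_i$, so that $r^{\sharp}(\xi)=\sum_i\langle\xi, x_i\rangle y_i$ for any $\xi\in\g^{\ast}$, and recall that the coadjoint action satisfies $\langle\ad_{\g}^{\ast}(g)(\xi), h\rangle=\langle\xi, [g,h]\rangle$. The key observation is that after pairing against an arbitrary $\xi_3\in\g^{\ast}$, each of the three summands $[r_{13},r_{23}]$, $[r_{12},r_{23}]$, $[r_{12},r_{13}]$ of $\mathbf{C}_r$ corresponds to one of the three terms occurring in the defect $[r^{\sharp}(\xi_1), r^{\sharp}(\xi_2)]-r^{\sharp}\big(\rho(r^{\sharp}(\xi_1))(\xi_2)-\rho(r^{\sharp}(\xi_2))(\xi_1)\big)$ of the $\mathcal{O}$-operator equation, where $\rho=-\ad_{\g}^{\ast}$.

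First I would compute $\langle\xi_1\otimes\xi_2\otimes\xi_3, [r_{13},r_{23}]\rangle=\sum_{i,j}\langle\xi_1,x_i\rangle\langle\xi_2,x_j\rangle\langle\xi_3,[y_i,y_j]\rangle$ and recognize it directly as $\langle\xi_3, [r^{\sharp}(\xi_1),r^{\sharp}(\xi_2)]\rangle$; this handles the bracket term. Next, unwinding the definitions of $r^{\sharp}$ and the coadjoint representation, I would show that $\langle\xi_3, r^{\sharp}(\rho(r^{\sharp}(\xi_1))(\xi_2))\rangle=-\langle\xi_1\otimes\xi_2\otimes\xi_3, [r_{12},r_{23}]\rangle$, so the first image term matches the second summand of $\mathbf{C}_r$.

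The main obstacle is the third pairing: the term $[r_{12},r_{13}]=\sum_{i,j}[x_i,x_j]\otimes y_i\otimes y_j$ carries a bracket of the \emph{left} tensor legs $[x_i,x_j]$, whereas the $\mathcal{O}$-operator term $r^{\sharp}(\rho(r^{\sharp}(\xi_2))(\xi_1))$ naturally produces brackets of the shape $[y_j,x_k]$. Bridging these two forms is exactly where the skew-symmetry hypothesis $\tau(r)=-r$ enters: I would use the identity $\sum_j\langle\xi_2,x_j\rangle\,\Phi(y_j)=-\sum_j\langle\xi_2,y_j\rangle\,\Phi(x_j)$ (valid because $r$ is skew-symmetric) to convert $[y_j,x_k]$ into $[x_j,x_k]$ at the cost of a sign, after which the expression lines up with $-\langle\xi_1\otimes\xi_2\otimes\xi_3,[r_{12},r_{13}]\rangle$. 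Assembling the three identities gives
$$
\langle\xi_1\otimes\xi_2\otimes\xi_3,\ \mathbf{C}_r\rangle
=\big\langle\xi_3,\ [r^{\sharp}(\xi_1),r^{\sharp}(\xi_2)]
-r^{\sharp}\big(\rho(r^{\sharp}(\xi_1))(\xi_2)-\rho(r^{\sharp}(\xi_2))(\xi_1)\big)\big\rangle.
$$
Since $\xi_1,\xi_2,\xi_3$ are arbitrary and the pairing $\g^{\ast}\times\g\to\Bbbk$ is nondegenerate, the left side vanishes identically if and only if the right side does, which is precisely the asserted equivalence between $r$ solving the $\CYBE$ and $r^{\sharp}$ being an $\mathcal{O}$-operator associated to $(\g^{\ast},-\ad_{\g}^{\ast})$.
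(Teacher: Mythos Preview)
The paper does not supply its own proof of this proposition: it is quoted from \cite{Kup,CP} and used as a black box. Your approach---pairing $\mathbf{C}_r$ against $\xi_1\otimes\xi_2\otimes\xi_3$ and matching the three summands with the three terms of the $\mathcal{O}$-operator defect---is the standard direct verification and is correct in outline. One small caution: in your discussion of the third term you write that after applying skew-symmetry the expression ``lines up with $-\langle\xi_1\otimes\xi_2\otimes\xi_3,[r_{12},r_{13}]\rangle$'', but in fact $\langle\xi_3,\,r^{\sharp}(\rho(r^{\sharp}(\xi_2))(\xi_1))\rangle = +\langle\xi_1\otimes\xi_2\otimes\xi_3,[r_{12},r_{13}]\rangle$ (the sign from $\rho=-\ad_\g^{\ast}$ and the sign from the skew-symmetry swap cancel). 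This is consistent with your final displayed identity, which is stated correctly, so the slip is only in the intermediate narration; just be sure the signs are tracked carefully when you write it out in full.
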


The $\mathcal{O}$-operator of pre-Lie algebras was considered in \cite{Bai1}.
Recall that an {\bf $\mathcal{O}$-operator of a pre-Lie algebra $(A, \diamond)$
associated to a bimodule $(V, \kl, \kr)$} is a linear map $P: V\rightarrow A$ such that
$$
P(v_{1})\diamond P(v_{2})=P\big(\kl(P(v_{1}))(v_{2})+\kr(P(v_{2}))(v_{1})\big),
$$
for any $v_{1}, v_{2}\in V$.

\begin{pro}[\cite{Bai1}]\label{pro:o-PL}
Let $(A, \diamond)$ be a pre-Lie algebra, $r\in A\otimes A$ be symmetric.
Then $r$ is a solution of the $\PLYBE$ in $(A, \diamond)$ if and only if $r^{\sharp}$
is an $\mathcal{O}$-operator of $(A, \diamond)$ associated to the coregular
bimodule $(A^{\ast}, \fr_{A}^{\ast}-\fl_{A}^{\ast}, \fr_{A}^{\ast})$.
\end{pro}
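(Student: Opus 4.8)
The plan is to convert both conditions into identities between elements of $A\otimes A\otimes A$ and to observe that they differ only by a flip of the last two tensor factors, so that they vanish simultaneously. Throughout I write $r=\sum_{i}x_{i}\otimes y_{i}$; since $r$ is symmetric one has $\sum_{i}x_{i}\otimes y_{i}=\sum_{i}y_{i}\otimes x_{i}$ and $r^{\sharp}(\xi)=\sum_{i}\langle\xi,x_{i}\rangle y_{i}$. The $\mathcal{O}$-operator condition for $P=r^{\sharp}$ associated to $(A^{\ast},\fr_{A}^{\ast}-\fl_{A}^{\ast},\fr_{A}^{\ast})$ is the vanishing, for all $\xi_{1},\xi_{2}\in A^{\ast}$, of
$$D(\xi_{1},\xi_{2}):=r^{\sharp}(\xi_{1})\diamond r^{\sharp}(\xi_{2})-r^{\sharp}\big((\fr_{A}^{\ast}-\fl_{A}^{\ast})(r^{\sharp}(\xi_{1}))(\xi_{2})+\fr_{A}^{\ast}(r^{\sharp}(\xi_{2}))(\xi_{1})\big)\in A.$$
By nondegeneracy of the canonical pairing between $A$ and $A^{\ast}$, this is equivalent to $\langle\xi_{3},D(\xi_{1},\xi_{2})\rangle=0$ for all $\xi_{1},\xi_{2},\xi_{3}$.

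First I would unwind $\langle\xi_{3},D(\xi_{1},\xi_{2})\rangle$ using only the definitions of $r^{\sharp}$, $\fl_{A}^{\ast}$ and $\fr_{A}^{\ast}$ (recall $\langle\fl_{A}^{\ast}(a)(\xi),b\rangle=\langle\xi,a\diamond b\rangle$ and $\langle\fr_{A}^{\ast}(a)(\xi),b\rangle=\langle\xi,b\diamond a\rangle$). A direct expansion rewrites $\langle\xi_{3},D(\xi_{1},\xi_{2})\rangle$ as $\langle\xi_{1}\otimes\xi_{2}\otimes\xi_{3},\,G\rangle$ for an explicit tensor $G\in A\otimes A\otimes A$ built from $r$ and $\diamond$: the three summands of $D$ produce, respectively, $r_{13}\diamond r_{23}$, a difference of two terms carrying the product in the middle factor, and $r_{13}\diamond r_{21}$. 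Hence the $\mathcal{O}$-operator condition is equivalent to $G=0$.

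Next I would simplify $\mathbf{PL}_{r}$ under the hypothesis $r=\tau(r)$. Relabelling indices shows $r_{13}\diamond r_{12}=r_{13}\diamond r_{21}$ and $r_{23}\diamond r_{12}=r_{23}\diamond r_{21}$, so two pairs of terms in $\mathbf{PL}_{r}$ cancel and $\mathbf{PL}_{r}=r_{21}\diamond r_{13}+r_{23}\diamond r_{13}-r_{12}\diamond r_{23}-r_{13}\diamond r_{23}$. The heart of the argument is then to verify, term by term and again using $r=\tau(r)$ to move the symmetric factors between the two legs of each $\diamond$, that $\mathbf{PL}_{r}=-(\id\otimes\tau)(G)$, equivalently that $\langle\xi_{1}\otimes\xi_{2}\otimes\xi_{3},\mathbf{PL}_{r}\rangle=-\langle\xi_{2},D(\xi_{1},\xi_{3})\rangle$ for all $\xi_{1},\xi_{2},\xi_{3}$. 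Since $\id\otimes\tau$ is invertible, $G=0$ if and only if $\mathbf{PL}_{r}=0$, which yields both implications of the proposition simultaneously.

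The main obstacle is this last matching $\mathbf{PL}_{r}=-(\id\otimes\tau)(G)$. Each of the four terms of the reduced $\mathbf{PL}_{r}$ carries its $\diamond$-product in a different tensor slot than the corresponding term of $(\id\otimes\tau)(G)$, so reconciling them requires applying the symmetry $r=\tau(r)$ to the appropriate index in each term in order to migrate the product into the matching position; it is easy here to misplace a slot or a sign, and one must track carefully which of $\fl_{A}^{\ast}$ and $\fr_{A}^{\ast}$ produced each ordering $x_{i}\diamond y_{j}$ versus $y_{j}\diamond x_{i}$. Once this bookkeeping is carried out, the equivalence follows formally from nondegeneracy of the pairing, with no appeal to the pre-Lie axiom itself.
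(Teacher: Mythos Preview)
The paper does not supply its own proof of this proposition: it is stated with a citation to \cite{Bai1} and used as a known input, so there is nothing in the present text to compare your argument against directly. That said, your strategy is the standard and correct one for this kind of statement. Writing $G$ for the tensor satisfying $\langle\xi_{3},D(\xi_{1},\xi_{2})\rangle=\langle\xi_{1}\otimes\xi_{2}\otimes\xi_{3},G\rangle$, an explicit computation gives $G=r_{13}\diamond r_{23}-r_{23}\diamond r_{12}+r_{12}\diamond r_{23}-r_{13}\diamond r_{21}$, and under the symmetry hypothesis the four relations $(\id\otimes\tau)(r_{13}\diamond r_{23})=r_{12}\diamond r_{23}$, $(\id\otimes\tau)(r_{23}\diamond r_{21})=r_{23}\diamond r_{13}$, $(\id\otimes\tau)(r_{12}\diamond r_{23})=r_{13}\diamond r_{23}$, $(\id\otimes\tau)(r_{13}\diamond r_{12})=r_{21}\diamond r_{13}$ yield precisely $\mathbf{PL}_{r}=-(\id\otimes\tau)(G)$, confirming your claim. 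So your outline goes through; the ``main obstacle'' you flag is genuine bookkeeping but contains no hidden subtlety beyond repeated use of $r=\tau(r)$ on one index at a time.
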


By the proof of Theorem \ref{thm:indu-sLiebia}, we also have the following commutative diagram:
$$
\xymatrix@C=3cm@R=0.5cm{
\txt{$r$ \\ {\tiny a symmetric solution} \\ {\tiny of the $\PLYBE$ in $(A, \diamond)$}}
\ar[d]_-{{\rm Pro.}~\ref{pro:PLYBE-CYBE}}\ar[r]^-{{\rm Pro.}~\ref{pro:o-PL}} &
\txt{$r^{\sharp}$\\ {\tiny an $\mathcal{O}$-operator of $(A, \diamond)$} \\
{\tiny associated to $(A^{\ast}, \fr_{A}^{\ast}-\fl_{A}^{\ast}, \fr_{A}^{\ast})$}}
\ar[d]^-{\mbox{$-\otimes\kappa^{\sharp}$}} \\
\txt{$\widehat{r}$ \\ {\tiny a skew-symmetric solution} \\ {\tiny of the $\CYBE$ in
$(A\otimes B, [-,-])$}} \ar[r]^-{{\rm Pro.}~\ref{pro:o-lie}}
& \txt{$\widehat{r}^{\sharp}=r^{\sharp}\otimes\kappa^{\sharp}$ \\
{\tiny an $\mathcal{O}$-operator of $(A\otimes B, [-,-])$ } \\
{\tiny associated to $((A\otimes B)^{\ast}, -\ad^{\ast}_{A\otimes B})$}}}
$$

\begin{ex}\label{ex:prelie-indlie}
Define a bilinear operator $\diamond$ on 2-dimensional vector space
$A=\Bbbk\{e_{1}, e_{2}\}$ by $e_{1}\diamond e_{1}=e_{1}$, $e_{1}\diamond e_{2}=-e_{2}$
and $e_{2}\diamond e_{1}=e_{2}\diamond e_{2}=0$. Then $(A, \diamond)$ is a pre-Lie
algebra. One can check that $r=e_{1}\otimes e_{1}$ is a symmetric solution of the
$\PLYBE$ in $(A, \diamond)$. Thus, by Proposition \ref{pro:spec-plbia}, $r$ induces a
linear map $\vartheta: A\rightarrow A\otimes A$, $\vartheta(e_{1})=e_{1}\otimes e_{1}$,
$\vartheta(e_{2})=e_{1}\otimes e_{2}$, such that $(A, \diamond, \vartheta)$
is a triangular pre-Lie bialgebra. Let $(B=\Bbbk\{x_{1}, x_{2}\}, \cdot, \omega)$ be
a quadratic perm algebra, where $x_{2}x_{1}=x_{1}$, $x_{2}x_{2}=x_{2}$, $x_{1}x_{1}=0
=x_{1}x_{2}$ and $\omega(x_{1}, x_{2})=1$. Then, by Theorem \ref{thm:pre-lie},
we get a Lie bialgebra $(A\otimes B, [-,-], \delta)$. If we denote
$y_{1}:=e_{1}\otimes x_{1}$, $y_{2}:=e_{1}\otimes x_{2}$, $y_{3}:=e_{2}\otimes x_{1}$
and $y_{4}:=e_{2}\otimes x_{2}$. Then the nonzero brackets and
coproducts in $(A\otimes B, [-,-], \delta)$ are given by
\begin{align*}
&\;[y_{1}, y_{2}]=-y_{1},\qquad\qquad\qquad\; [y_{2}, y_{3}]=-y_{3},\qquad\qquad\qquad\;
[y_{2}, y_{4}]=y_{4},\\
&\delta(y_{2})=y_{2}\otimes y_{1}-y_{1}\otimes y_{2},\qquad
\delta(y_{3})=y_{3}\otimes y_{1}-y_{1}\otimes y_{3},\qquad
\delta(y_{4})=y_{4}\otimes y_{1}-y_{1}\otimes y_{4}.
\end{align*}
On the other hand, by Theorem \ref{thm:indu-sLiebia},
we get a skew-symmetric solution $\widehat{r}=y_{1}\otimes y_{2}-y_{2}\otimes y_{1}$
of the $\CYBE$ in Lie algebra $(A\otimes B, [-,-])$. One can check that the triangular
Lie bialgebra associated with $\widehat{r}$ is just the Lie bialgebra $(A\otimes B,
[-,-], \delta)$ given above.

Moreover, if we denote the dual basis of $\{e_{1}, e_{2}\}$ by $\{\xi_{1}, \xi_{2}\}$
and denote the dual basis of $\{x_{1}, x_{2}\}$ by $\{\eta_{1}, \eta_{2}\}$, then
$\{\xi_{1}\otimes\eta_{1}, \xi_{1}\otimes\eta_{2}, \xi_{2}\otimes\eta_{1},
\xi_{2}\otimes\eta_{2}\}$ is a dual basis of $\{y_{1}, y_{2}, y_{3}, y_{4}\}$.
By Propositions \ref{pro:o-PL} and \ref{pro:o-lie}, we get an $\mathcal{O}$-operator
$r^{\sharp}: A^{\ast}\rightarrow A$, $r^{\sharp}(\xi_{1})=e_{1}$, $r^{\sharp}(\xi_{2})=0$,
and an $\mathcal{O}$-operator $\widehat{r}^{\sharp}: (A\otimes B)^{\ast}\rightarrow
A\otimes B$, $\widehat{r}^{\sharp}(\xi_{1}\otimes\eta_{1})=y_{2}$,
$r^{\sharp}(\xi_{1}\otimes\eta_{1})=-y_{1}$ and others are all zero, respectively.
Note that the linear map $\kappa^{\sharp}: B^{\ast}\rightarrow B$ is given by
$\kappa^{\sharp}(\eta_{1})=x_{2}$ and $\kappa^{\sharp}(\eta_{2})=-x_{1}$.
We get $\widehat{r}^{\sharp}=r^{\sharp}\otimes\kappa^{\sharp}$.
\end{ex}

\section{Infinite-dimensional ASI bialgebras vis affinization of dendriform $\md$-bialgebras}
\label{sec:dend-ASI}
In this section, we recall the notion of a quadratic $\bz$-graded perm algebra, as
a $\bz$-graded perm algebra equipped with an invariant bilinear form. We show that
the tensor product of a finite-dimensional dendriform $\md$-bialgebra and a quadratic
$\bz$-graded perm algebra can be naturally endowed with a completed ASI bialgebra.
The converse of this result also holds when the quadratic $\bz$-graded perm algebra
is special, giving the desired characterization of the dendriform $\md$-bialgebra
that its affinization is a completed ASI bialgebra.

\subsection{Affinization of dendriform algebras and dendriform coalgebras}\label{subsec:aff}
To provide the affinization characterization of dendriform algebra, we first recall
the notion of $\bz$-graded perm algebra.

\begin{defi}\label{def:zgrad-alg}
A {\bf $\bz$-graded associative algebra} (resp. {\bf $\bz$-graded perm algebra})
is an associative algebra $(A, \ast)$ (resp. a perm algebra $(B, \cdot)$) with a
linear decomposition $A=\oplus_{i\in\bz}A_{i}$ (resp. $B=\oplus_{i\in\bz}B_{i}$) such that
each $A_{i}$ (resp. $B_{i}$) is finite-dimensional and $A_{i}\ast A_{j}\subseteq A_{i+j}$
(resp. $B_{i}\cdot B_{j}\subseteq B_{i+j}$) for all $i, j\in\bz$.
\end{defi}

\begin{ex}[\cite{LZB}]\label{ex:grperm}
Let $B=\{f_{1}\partial_{1}+f_{2}\partial_{2}\mid f_{1}, f_{2}\in\Bbbk[x_{1}^{\pm},
x_{2}^{\pm}]\}$ and define a binary operation $\cdot: B\otimes B\rightarrow B$ by
\begin{align*}
(x_{1}^{i_{1}}x_{2}^{i_{2}}\partial_{s})\cdot(x_{1}^{j_{1}}x_{2}^{j_{2}}\partial_{t})
:=\delta_{s,1}x_{1}^{i_{1}+j_{1}+1}x_{2}^{i_{2}+j_{2}}\partial_{t}
+\delta_{s,2}x_{1}^{i_{1}+j_{1}}x_{2}^{i_{2}+j_{2}+1}\partial_{t},
\end{align*}
for any $i_{1}, i_{2}, j_{1}, j_{2}\in\bz$ and $t\in\{1, 2\}$.
Then $(B, \cdot)$ is a $\bz$-graded perm algebra with the linear decomposition
$B=\oplus_{i\in\bz}B_{i}$, where
$$
B_{i}=\Big\{\sum_{k=1}^{2}f_{k}\partial_{k}\mid f_{k}\text{ is a homogeneous
polynomial with }\deg(f_{k})=i-1,\; k=1,2\Big\}
$$
for all $i\in\bz$.
\end{ex}

By Proposition \ref{pro:perm-dend}, we have the following proposition.

\begin{pro}\label{pro:aff-dalg}
Let $(D, \prec, \succ)$ be a finite-dimensional dendriform algebra and
$(B=\oplus_{i\in\bz}B_{i}, \cdot)$ be a $\bz$-graded perm algebra.
Define a binary operation on $D\otimes B$ by
$$
(d_{1}\otimes b_{1})\ast(d_{2}\otimes b_{2})=(d_{1}\succ d_{2})\otimes(b_{1}b_{2})
+(d_{1}\prec d_{2})\otimes(b_{2}b_{1}),
$$
for any $d_{1}, d_{2}\in D$ and $b_{1}, b_{2}\in B$. Then $(D\otimes B, \ast)$ is a
$\bz$-graded associative algebra, which is called an {\bf affine associative algebra}
from $(D, \prec, \succ)$ by $(B=\oplus_{i\in\bz}B_{i}, \cdot)$.
Moreover, if $(B=\oplus_{i\in\bz}B_{i}, \cdot)$ is the $\bz$-graded perm algebra given in
Example \ref{ex:grperm}, then $(D\otimes B, \ast)$ is a $\bz$-graded associative
algebra if and only if $(D, \prec, \succ)$ is a dendriform algebra.
\end{pro}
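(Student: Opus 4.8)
The plan is to handle the two assertions separately; the genuine content is the converse (\emph{only if}) half of the second assertion, while everything else is bookkeeping. For the first assertion I would invoke Proposition~\ref{pro:perm-dend}, which already establishes that $\ast$ is associative for \emph{any} perm algebra $B$, with no grading hypothesis. To see that the grading is respected, place $D$ in degree $0$ and use the decomposition $D\otimes B=\oplus_{i\in\bz}(D\otimes B_i)$; each summand is finite-dimensional because $D$ is finite-dimensional and each $B_i$ is. For $b_1\in B_i$ and $b_2\in B_j$ both $b_1b_2$ and $b_2b_1$ lie in $B_{i+j}$, so the defining formula sends $(D\otimes B_i)\ast(D\otimes B_j)$ into $D\otimes B_{i+j}$. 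Hence the grading is automatic and plays no further role, so for the second assertion ``$\bz$-graded associative'' reduces to plain associativity, and the forward (\emph{if}) direction there is just the first assertion specialized to the perm algebra of Example~\ref{ex:grperm}.

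For the converse I would compute the associator $\big((d_1\otimes b_1)\ast(d_2\otimes b_2)\big)\ast(d_3\otimes b_3)-(d_1\otimes b_1)\ast\big((d_2\otimes b_2)\ast(d_3\otimes b_3)\big)$ on basis elements $b_k=x_1^{m_k}x_2^{n_k}\partial_{s_k}$. The crucial feature of the perm algebra in Example~\ref{ex:grperm} is that each bracketing of a triple $b_ib_jb_k$ evaluates to a \emph{single} basis monomial: its position multi-index is $(m_1+m_2+m_3,\,n_1+n_2+n_3)$ shifted by $e_s+e_t$, where $e_1=(1,0)$, $e_2=(0,1)$ and $s,t$ are the derivative indices of the two non-rightmost factors, while its surviving derivative index is that of the rightmost factor (the right argument of the outer product, taken recursively). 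Expanding $\ast$ produces eight terms, and collecting them by their $B$-monomial sorts them into exactly three families: derivative index $s_3$ with shift $e_{s_1}+e_{s_2}$, whose $D$-coefficient is $(d_1\succ d_2)\succ d_3+(d_1\prec d_2)\succ d_3-d_1\succ(d_2\succ d_3)$; derivative index $s_2$ with shift $e_{s_1}+e_{s_3}$, coefficient $(d_1\succ d_2)\prec d_3-d_1\succ(d_2\prec d_3)$; and derivative index $s_1$ with shift $e_{s_2}+e_{s_3}$, coefficient $(d_1\prec d_2)\prec d_3-d_1\prec(d_2\prec d_3)-d_1\prec(d_2\succ d_3)$. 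These three coefficients are precisely the three defining relations of Definition~\ref{def:Dalg}, which re-proves the forward direction and reduces the converse to extracting each relation individually.

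The one real obstacle is that these three families cannot be separated \emph{simultaneously}: because $B$ carries only the two derivations $\partial_1,\partial_2$, at least two of $s_1,s_2,s_3$ always coincide, and a short check over the eight index assignments $(s_1,s_2,s_3)$ shows that in every case at least two of the three families collapse onto the same monomial. I would get around this by isolating the axioms one at a time through three different choices of derivative indices, taking all $m_k=n_k=0$ so that the distinguished family always lands on the single monomial $x_1^{2}\partial_2$. With $(s_1,s_2,s_3)=(1,1,2)$ the first family (derivative index $2$, shift $(2,0)$) is the only one hitting $x_1^2\partial_2$, while the other two land on $x_1x_2\partial_1$; its coefficient must therefore vanish, forcing the third dendriform relation. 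With $(1,2,1)$ the second family is the one isolated on $x_1^2\partial_2$, forcing the second relation, and with $(2,1,1)$ the third family is isolated, forcing the first relation. Since $d_1,d_2,d_3$ are arbitrary, associativity thus yields all three dendriform axioms, completing the characterization.
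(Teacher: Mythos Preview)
Your proposal is correct and follows essentially the same approach as the paper: both invoke Proposition~\ref{pro:perm-dend} for the forward direction and, for the converse, test associativity on the three index triples $(s_1,s_2,s_3)=(1,1,2),(1,2,1),(2,1,1)$ with $b_k=\partial_{s_k}$, isolating each dendriform axiom as the coefficient of $x_1^{2}\partial_2$. Your presentation is slightly more systematic in that you first identify the three families in the general associator before specializing, but the computations and the extraction strategy coincide with the paper's.
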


\begin{proof}
By Proposition \ref{pro:perm-dend}, $(D\otimes B, \ast)$ is an associative algebra.
Since $(B=\oplus_{i\in\bz}B_{i}, \cdot)$ is $\bz$-graded, $(D\otimes B, \ast)$ is a
$\bz$-graded associative algebra. If $(B=\oplus_{i\in\bz}B_{i}, \cdot)$ is the
$\bz$-graded perm algebra given in Example \ref{ex:grperm}, then we have
\begin{align*}
\Big((d_{1}\otimes\partial_{1})\ast(d_{2}\otimes\partial_{2})\Big)
\ast(d_{3}\otimes\partial_{1})&=\Big((d_{1}\succ d_{2})\otimes x_{1}\partial_{2})
+(d_{1}\prec d_{2})\otimes x_{2}\partial_{1}\Big)\ast(d_{3}\otimes\partial_{1})\\
&=((d_{1}\succ d_{2})\succ d_{3})\otimes x_{1}x_{2}\partial_{1}
+((d_{1}\succ d_{2})\prec d_{3})\otimes x_{1}^{2}\partial_{2}\\[-1mm]
&\quad+((d_{1}\prec d_{2})\succ d_{3})\otimes x_{1}x_{2}\partial_{1}
+((d_{1}\prec d_{2})\prec d_{3})\otimes x_{1}x_{2}\partial_{1},
\end{align*}
and
\begin{align*}
(d_{1}\otimes\partial_{1})\ast\Big((d_{2}\otimes\partial_{2})
\ast(d_{3}\otimes\partial_{1})\Big)&=(d_{1}\otimes\partial_{1})\ast
\Big((d_{2}\succ d_{3})\otimes x_{2}\partial_{1}
+(d_{2}\prec d_{3})\otimes x_{1}\partial_{2}\Big)\\
&=(d_{1}\succ(d_{2}\succ d_{3}))\otimes x_{1}x_{2}\partial_{1}
+(d_{1}\prec(d_{2}\succ d_{3}))\otimes x_{1}x_{2}\partial_{1}\\[-1mm]
&\quad+(d_{1}\succ(d_{2}\prec d_{3}))\otimes x_{1}^{2}\partial_{2}
+(d_{1}\prec(d_{2}\prec d_{3}))\otimes x_{1}x_{2}\partial_{1}.
\end{align*}
Comparing the coefficients of $x_{1}^{2}\partial_{2}$, we get $(d_{1}\succ d_{2})\prec d_{3}
=d_{1}\succ(d_{2}\prec d_{3})$. Similarly, comparing the coefficients of
$x_{1}^{2}\partial_{2}$ in equations $((d_{1}\otimes\partial_{1})\ast
(d_{2}\otimes\partial_{1}))\ast(d_{3}\otimes\partial_{2})=(d_{1}\otimes\partial_{1})
\ast((d_{2}\otimes\partial_{1})\ast(d_{3}\otimes\partial_{2}))$ and
$((d_{1}\otimes\partial_{2})\ast(d_{2}\otimes\partial_{1}))\ast(d_{3}\otimes\partial_{1})
=(d_{1}\otimes\partial_{2})\ast((d_{2}\otimes\partial_{1})
\ast(d_{3}\otimes\partial_{1}))$, we get $(d_{1}\prec d_{2})\prec d_{3}
=d_{1}\prec(d_{2}\prec d_{3})+d_{1}\prec(d_{2}\succ d_{3})$ and $d_{1}\succ(d_{2}\succ d_{3})
=(d_{1}\prec d_{2})\succ d_{3}+(d_{1}\succ d_{2})\succ d_{3}$ respectively.
Thus, $(D, \prec, \succ)$ is a dendriform algebra. The proof is finished.
\end{proof}

Recall that a {\bf dendriform coalgebra} is a triple $(D, \theta_{\prec}, \theta_{\succ})$,
where $D$ is a vector space and $\theta_{\prec}, \theta_{\succ}: D\rightarrow D\otimes D$
are linear maps such that the following conditions hold:
\begin{align*}
&\qquad\qquad\qquad\quad(\theta_{\prec}\otimes\id)\theta_{\prec}
=(\id\otimes\theta_{\prec})\theta_{\prec}+(\id\otimes\theta_{\succ})\theta_{\prec},\\
&(\theta_{\succ}\otimes\id)\theta_{\prec}=(\id\otimes\theta_{\prec})\theta_{\succ},\qquad\qquad
(\id\otimes\theta_{\succ})\theta_{\succ}=(\theta_{\prec}\otimes\id)\theta_{\succ}
+(\theta_{\succ}\otimes\id)\theta_{\succ}.
\end{align*}
To carry out the dendriform coalgebra affinization, we need to extend the codomain
of the comultiplications $\theta_{\prec}, \theta_{\succ}$ to allow infinite sums.
Let $U=\oplus_{i\in\bz}U_{i}$ and $V=\oplus_{j\in\bz}V_{j}$ be $\bz$-graded vector spaces.
We call the {\bf completed tensor product} of $U$ and $V$ to be the vector space
$$
U\,\hat{\otimes}\,V:=\prod_{i,j\in\bz}U_{i}\otimes V_{j}.
$$
If $U$ and $V$ are finite-dimensional, then $U\,\hat{\otimes}\,V$ is just the usual
tensor product $U\otimes V$. In general, an element in $U\,\hat{\otimes}\,V$ is an
infinite formal sum $\sum_{i,j\in\bz}X_{ij} $ with $X_{ij}\in U_{i}\otimes V_{j}$.
So $X_{ij}=\sum_{\alpha} u_{i, \alpha}\otimes v_{j, \alpha}$ for pure tensors
$u_{i, \alpha}\otimes v_{j, \alpha}\in U_{i}\otimes V_{j}$ with $\alpha$ in a finite
index set. Thus a general term of $U\,\hat{\otimes}\,V$ is a possibly infinite sum
$\sum_{i,j,\alpha}u_{i\alpha}\otimes v_{j\alpha}$, where $i, j\in\bz$ and $\alpha$ is
in a finite index set (which might depend on $i, j$). With these notations, for linear
maps $f: U\rightarrow U'$ and $g: V\rightarrow V'$, define
$$
f\,\hat{\otimes}\,g: U\,\hat{\otimes}\,V\rightarrow U'\,\hat{\otimes}\,V',
\qquad \sum_{i,j,\alpha}u_{i,\alpha}\otimes v_{j, \alpha}\mapsto
\sum_{i,j,\alpha} f(u_{i, \alpha})\otimes g(v_{j, \alpha}).
$$
Also the twist map $\tau$ has its completion
$\hat{\tau}: V\,\hat{\otimes}\,V\rightarrow V\,\hat{\otimes}\,V$,
$\sum_{i,j,\alpha}u_{i, \alpha}\otimes v_{j, \alpha}\mapsto
\sum_{i,j,\alpha}v_{j, \alpha}\otimes u_{i, \alpha}$.
Moreover, we define a (completed) comultiplication to be a linear map
$\Delta: V\rightarrow V\,\hat{\otimes}\,V$,
$\Delta(v):=\sum_{i, j, \alpha}v_{1, i, \alpha}\otimes v_{2, j, \alpha}$.
Then we have the well-defined map
$$
(\Delta\,\hat{\otimes}\,\id)\Delta(v)=(\Delta\,\hat{\otimes}\,\id)
\Big(\sum_{i,j,\alpha}v_{1, i, \alpha}\otimes v_{2, j, \alpha}\Big)
:=\sum_{i,j,\alpha}\Delta(v_{1, i, \alpha})\otimes v_{2, j, \alpha}
\in V\,\hat{\otimes}\,V\,\hat{\otimes}\,V.
$$

\begin{defi}\label{def:ali-coa}
\begin{enumerate}
\item[$(i)$] A {\bf completed coassociative coalgebra} is a pair $(A, \Delta)$ where
    $A=\oplus_{i\in\bz}A_{i}$ is a $\bz$-graded vector space and
    $\Delta: A\rightarrow A\,\hat{\otimes}\, A$ is a linear map satisfying
    $$
    (\Delta\,\hat{\otimes}\,\id)\Delta=(\id\,\hat{\otimes}\,\Delta)\Delta.
    $$
\item[$(ii)$] A {\bf completed perm coalgebra} is a pair $(B, \nu)$, where
    $B=\oplus_{i\in\bz}B_{i}$ is a $\bz$-graded vector space and $\nu: B\rightarrow
    B\,\hat{\otimes}\,B$ is a linear map satisfying
    $$
    (\nu\,\hat{\otimes}\,\id)\nu=(\id\,\hat{\otimes}\,\nu)\nu
    =(\tau\,\hat{\otimes}\,\id)(\nu\,\hat{\otimes}\,\id)\nu.
    $$
\end{enumerate}
\end{defi}

\begin{ex}[\cite{LZB}]\label{ex:grpermco}
Consider the $\bz$-graded vector space $B=\{f_{1}\partial_{1}+f_{2}\partial_{2}\mid
f_{1}, f_{2}\in\Bbbk[x_{1}^{\pm}, x_{2}^{\pm}]\}=\oplus_{i\in\bz}B_{i}$ given in
Example \ref{ex:grperm}. Define a linear map $\nu: B\rightarrow B\,\hat{\otimes}\,B$ by
\begin{align*}
\nu(x_{1}^{m}x_{2}^{n}\partial_{1})&=\sum_{i_{1}, i_{2}\in\bz}
\Big(x_{1}^{i_{1}}x_{2}^{i_{2}}\partial_{1}\otimes x_{1}^{m-i_{1}}
x_{2}^{n-i_{2}+1}\partial_{1} -x_{1}^{i_{1}}x_{2}^{i_{2}}\partial_{2}
\otimes x_{1}^{m-i_{1}+1}x_{2}^{n-i_{2}}\partial_{1}\Big), \\[-2mm]
\nu(x_{1}^{m}x_{2}^{n}\partial_{2})&=\sum_{i_{1}, i_{2}\in\bz}
\Big(x_{1}^{i_{1}}x_{2}^{i_{2}}\partial_{1}\otimes x_{1}^{m-i_{1}}
x_{2}^{n-i_{2}+1}\partial_{2}-x_{1}^{i_{1}}x_{2}^{i_{2}}\partial_{2}
\otimes x_{1}^{m-i_{1}+1}x_{2}^{n-i_{2}}\partial_{2}\Big),
\end{align*}
for any $m, n \in\bz$. Then $(B=\oplus_{i\in\bz}B_{i}, \nu)$ is a completed perm coalgebra.
\end{ex}

Now, we consider the dual version of the dendriform algebra affinization,
for dendriform coalgebras. We give the dual version of Proposition \ref{pro:aff-dalg}.

\begin{pro}\label{pro:dco-coass}
Let $(D, \theta_{\prec}, \theta_{\succ})$ be a finite-dimensional dendriform coalgebra
and $(B=\oplus_{i\in\bz}B_{i}, \nu)$ be a completed perm coalgebra. Define a linear
map $\Delta: D\otimes B\rightarrow(D\otimes B)\,\hat{\otimes}\,(D\otimes B)$ by
\begin{align*}
\Delta(d\otimes b)&=\theta_{\succ}(d)\bullet\nu(b)
+\theta_{\prec}(d)\bullet\,\hat{\tau}(\nu(b))\\
&:=\sum_{[d]}\sum_{i,j,\alpha}(d_{[1]}\otimes b_{1,i,\alpha})
\otimes(d_{[2]}\otimes b_{2,j,\alpha})+\sum_{(d)}\sum_{i,j,\alpha}(d_{(1)}\otimes
b_{2,j,\alpha})\otimes(d_{(2)}\otimes b_{1,i,\alpha}),
\end{align*}
for any $d\in D$ and $b\in B$, where $\theta_{\prec}(d)=\sum_{(d)}d_{(1)}\otimes d_{(2)}$,
$\theta_{\succ}(d)=\sum_{[d]}d_{[1]}\otimes d_{[2]}$ in the Sweedler notation
and $\nu(b)=\sum_{i,j,\alpha}b_{1,i,\alpha}\otimes b_{2,j,\alpha}$.
Then $(D\otimes B, \Delta)$ is a completed coassociative coalgebra.
Moreover, if $(B=\oplus_{i\in\bz}B_{i}, \nu)$ is the completed perm coalgebra given in
Example \ref{ex:grpermco}, then $(D\otimes B, \Delta)$ is a completed coassociative
coalgebra if and only if $(D, \theta_{\prec}, \theta_{\succ})$ is a dendriform coalgebra.
\end{pro}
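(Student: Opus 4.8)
The cleanest route is to read the statement as the graded linear dual of Proposition~\ref{pro:perm-dend}, so that the whole proposition is obtained by dualizing an already-established associativity. First I would record the standard graded-duality dictionary: since each graded piece is finite-dimensional, the graded dual of a $\bz$-graded perm algebra is a completed perm coalgebra and vice versa, the graded dual of a $\bz$-graded associative algebra is a completed coassociative coalgebra, and---because $D$ is finite-dimensional---$(D,\theta_\prec,\theta_\succ)$ is a dendriform coalgebra exactly when $(D^\ast,\theta_\prec^\ast,\theta_\succ^\ast)$ is a dendriform algebra. Under this dictionary the completed coproduct $\Delta$ of the statement is precisely the transpose of the product $\ast$ of Proposition~\ref{pro:perm-dend}: the $\theta_\succ(d)\bullet\nu(b)$ summand transposes $(d_1\succ d_2)\otimes(b_1b_2)$ and the $\theta_\prec(d)\bullet\hat\tau(\nu(b))$ summand transposes $(d_1\prec d_2)\otimes(b_2b_1)$, the $\hat\tau$ encoding the order reversal $b_2b_1$. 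Writing $B^{\mathrm{gr}\ast}$ for the graded dual perm algebra of $(B,\nu)$, Proposition~\ref{pro:perm-dend} makes $(D^\ast\otimes B^{\mathrm{gr}\ast},\ast)$ an associative algebra, and taking graded duals turns $(xy)z=x(yz)$ into $(\Delta\,\hat\otimes\,\id)\Delta=(\id\,\hat\otimes\,\Delta)\Delta$; the completed tensor product is exactly what makes this transpose land in $(D\otimes B)\,\hat\otimes\,(D\otimes B)\,\hat\otimes\,(D\otimes B)$.

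For the second assertion I would observe that the completed perm coalgebra $\nu$ of Example~\ref{ex:grpermco} is the graded dual of the $\bz$-graded perm algebra $\cdot$ of Example~\ref{ex:grperm}. Hence the equivalence here is the graded dual of that in Proposition~\ref{pro:aff-dalg}: a completed coassociative structure $\Delta$ on $D\otimes B$ exists if and only if its transpose is an associative product on $D^\ast\otimes B^{\mathrm{gr}\ast}$, which by the converse half of Proposition~\ref{pro:aff-dalg} forces $(D^\ast,\theta_\prec^\ast,\theta_\succ^\ast)$ to be a dendriform algebra, i.e.\ $(D,\theta_\prec,\theta_\succ)$ to be a dendriform coalgebra.

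Should one prefer to avoid duality and argue directly, the alternative is to expand both $(\Delta\,\hat\otimes\,\id)\Delta(d\otimes b)$ and $(\id\,\hat\otimes\,\Delta)\Delta(d\otimes b)$. Each produces a threefold coproduct on $D$ assembled from iterated $\theta_\prec,\theta_\succ$ together with a threefold completed coproduct on $B$ assembled from iterated $\nu$, the $\theta_\prec$-branches carrying $\hat\tau$-twists. Grouping terms by the pattern of $\prec/\succ$-coproducts in the $D$-factor, one matches the two sides using the three dendriform coalgebra relations on the $D$-components and the completed perm coalgebra relations $(\nu\,\hat\otimes\,\id)\nu=(\id\,\hat\otimes\,\nu)\nu=(\tau\,\hat\otimes\,\id)(\nu\,\hat\otimes\,\id)\nu$ on the $B$-components; for the converse one feeds $\partial_1,\partial_2$ into the $B$-slot and reads off each dendriform coalgebra axiom from the $D$-component attached to a distinguished monomial $x_1^{a}x_2^{b}\partial_t$, just as the coefficient of $x_1^2\partial_2$ isolated $(d_1\succ d_2)\prec d_3=d_1\succ(d_2\prec d_3)$ in the proof of Proposition~\ref{pro:aff-dalg}.

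The delicate point in either approach is the infinite-dimensional bookkeeping forced by the completed tensor product. One must check that every graded component of both sides of the coassociativity identity is a finite sum, so that terms may legitimately be compared, and keep the two $\hat\tau$-twists coming from the $\theta_\prec$-branch correctly aligned with the twist in $(\tau\,\hat\otimes\,\id)(\nu\,\hat\otimes\,\id)\nu$. Once the grading reduces each identity to a finite comparison, no new algebraic identity beyond those already used for the associative/dendriform-algebra side is needed.
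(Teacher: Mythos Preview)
Your alternative direct approach—expanding both sides of the coassociativity identity, grouping by the $\theta_\prec/\theta_\succ$ pattern on the $D$-factor, and matching via the dendriform coalgebra and completed perm coalgebra identities, then for the converse plugging in $\partial_1,\partial_2$ and reading off each axiom from a distinguished coefficient—is exactly what the paper does.

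Your primary duality route is different and conceptually cleaner, but one step is imprecise. The claim that the graded dual of a completed perm coalgebra is a $\bz$-graded perm algebra (``and vice versa'') is not automatic: Definition~\ref{def:ali-coa} imposes no grading constraint on $\nu$, so for $\xi\in B_i^*$, $\eta\in B_j^*$ the functional $b\mapsto(\xi\otimes\eta)(\nu(b))$ need not vanish on all but finitely many $B_k$, and the product need not close in $B^{\mathrm{gr}*}$. This is fixable—work instead with the full dual $B^*$, which is a genuine perm algebra, apply Proposition~\ref{pro:perm-dend} to $D^*\otimes B^*\cong(D\otimes B)^*$ (using that $D$ is finite-dimensional), and detect the coassociativity identity by pairing each finite-dimensional graded component of the triple completed tensor with homogeneous functionals. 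For the converse your reduction to Proposition~\ref{pro:aff-dalg} is sound, since the specific $(B,\nu)$ of Example~\ref{ex:grpermco} really is dual (via $\varpi$) to the perm algebra of Example~\ref{ex:grperm}; see Example~\ref{ex:ind-coperm}. Once the dictionary is made precise the duality approach is a one-line reduction; the paper's direct expansion avoids setting up that dictionary but carries out the same formal manipulation of identities by hand.
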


\begin{proof}
For any $\sum_{l}d'_{l}\otimes d''_{l}\otimes d'''_{l}\in D\otimes D\otimes D$ and
$\sum_{i,j,k,\alpha}b'_{i,\alpha}\otimes b''_{j,\alpha}
\otimes b'''_{k,\alpha}\in B\,\hat{\otimes}\,B\,\hat{\otimes}\,B$, we denote
$$
\Big(\sum_{l}d'_{l}\otimes d''_{l}\otimes d'''_{l}\Big)\bullet
\Big(\sum_{i,j,k,\alpha}b'_{i,\alpha}\otimes b''_{j,\alpha}\otimes b'''_{k,\alpha}\Big)
=\sum_{l}\sum_{i,j,k,\alpha}(d'_{l}\otimes b'_{i,\alpha})\otimes
(d''_{l}\otimes b''_{j,\alpha})\otimes(d'''_{l}\otimes b'''_{k,\alpha}).
$$
Then, by using the above notations, since $(B, \nu)$ is a perm coalgebra
and $(D, \theta_{\prec}, \theta_{\succ})$ is a dendriform coalgebra,
for any $d\otimes b\in D\otimes B$, we have
\begin{align*}
&\qquad\qquad\qquad\qquad\quad\;(\theta_{\prec}\otimes\id)(\theta_{\prec}(d))\bullet
(\hat{\tau}\,\hat{\otimes}\,\id)((\id\,\hat{\otimes}\,\hat{\tau})
((\id\,\hat{\otimes}\,\nu)(\nu(b))))\\[-1mm]
&\qquad\qquad\qquad\qquad=(\id\otimes\theta_{\prec})(\theta_{\prec}(d))\bullet
(\hat{\tau}\,\hat{\otimes}\,\id)((\id\,\hat{\otimes}\,\hat{\tau})
((\id\,\hat{\otimes}\,\nu)(\nu(b))))\\[-1mm]
&\qquad\qquad\qquad\qquad\qquad+(\id\otimes\theta_{\succ})(\theta_{\prec}(d))\bullet
(\hat{\tau}\,\hat{\otimes}\,\id)((\id\,\hat{\otimes}\,\hat{\tau})
((\id\,\hat{\otimes}\,\nu)(\nu(b)))),\\
&\qquad\quad(\id\otimes\theta_{\succ})(\theta_{\succ}(d))
\bullet(\id\,\hat{\otimes}\,\nu)(\nu(b))
=(\theta_{\succ}\otimes\id)(\theta_{\succ}(d))
\bullet(\id\,\hat{\otimes}\,\nu)(\nu(b))\\[-1mm]
&\qquad\qquad\qquad\qquad\qquad\qquad\qquad\qquad\quad
+(\theta_{\prec}\otimes\id)(\theta_{\succ}(d))
\bullet(\hat{\tau}\,\hat{\otimes}\,\id)((\id\,\hat{\otimes}\,\nu)(\nu(b))),\\
&(\id\otimes\theta_{\prec})(\theta_{\succ}(d))\bullet(\id\,\hat{\otimes}\,\hat{\tau})
((\id\,\hat{\otimes}\,\nu)(\nu(b)))=(\theta_{\succ}\otimes\id)(\theta_{\prec}(d))\bullet
(\id\,\hat{\otimes}\,\hat{\tau})((\hat{\tau}\,\hat{\otimes}\,\id)
((\id\,\hat{\otimes}\,\nu)(\nu(b)))).
\end{align*}
Therefore, we get
\begin{align*}
&\;(\id\,\hat{\otimes}\,\Delta)(\Delta(d\otimes b))\\
=&\;(\id\otimes\theta_{\succ})(\theta_{\succ}(d))\bullet(\id\,\hat{\otimes}\,\nu)(\nu(b))
+(\id\otimes\theta_{\prec})(\theta_{\prec}(d))\bullet(\id\,\hat{\otimes}\,\hat{\tau})
((\hat{\tau}\,\hat{\otimes}\,\id)((\id\,\hat{\otimes}\,\hat{\tau})
((\nu\,\hat{\otimes}\,\id)(\nu(b)))))\\
&\;+(\id\otimes\theta_{\prec})(\theta_{\succ}(d))
\bullet(\id\,\hat{\otimes}\,\hat{\tau})((\id\,\hat{\otimes}\,\nu)(\nu(b)))
+(\id\otimes\theta_{\succ})(\theta_{\prec}(d))\bullet(\hat{\tau}\,\hat{\otimes}\,\id)
((\id\,\hat{\otimes}\,\hat{\tau})((\nu\,\hat{\otimes}\,\id)(\nu(b))))\\
=&\;(\theta_{\succ}\otimes\id)(\theta_{\succ}(d))\bullet(\nu\,\hat{\otimes}\,\id)(\nu(b))
+(\theta_{\prec}\otimes\id)(\theta_{\prec}(d))\bullet(\hat{\tau}\,\hat{\otimes}\,\id)
((\id\,\hat{\otimes}\,\hat{\tau})((\hat{\tau}\,\hat{\otimes}\,\id)
((\id\,\hat{\otimes}\,\nu)(\nu(b)))))\\
&\;+(\theta_{\prec}\otimes\id)(\theta_{\succ}(d))
\bullet(\hat{\tau}\,\hat{\otimes}\,\id)((\nu\,\hat{\otimes}\,\id)(\nu(b)))
+(\theta_{\succ}\otimes\id)(\theta_{\prec}(d))\bullet(\id\,\hat{\otimes}\,\hat{\tau})
((\hat{\tau}\,\hat{\otimes}\,\id)((\id\,\hat{\otimes}\,\nu)(\nu(b))))\\
=&\;(\Delta\,\hat{\otimes}\,\id)(\Delta(d\otimes b)),
\end{align*}
since $(\id\,\hat{\otimes}\,\hat{\tau})(\hat{\tau}\,\hat{\otimes}\,\id)
(\id\,\hat{\otimes}\,\hat{\tau})=(\hat{\tau}\,\hat{\otimes}\,\id)
(\id\,\hat{\otimes}\,\hat{\tau})(\hat{\tau}\,\hat{\otimes}\,\id)$.
Thus, $(D\otimes B, \Delta)$ is a completed coassociative coalgebra.

Moreover, if $(B=\oplus_{i\in\bz}B_{i}, \nu)$ is the $\bz$-graded perm algebra given in
Example \ref{ex:grpermco}, then we have
\begin{align*}
&\;(\id\,\hat{\otimes}\,\nu)(\nu(\partial_{1}))\\
=&\;\sum_{i_{1}, i_{2}\in\bz}\sum_{j_{1}, j_{2}\in\bz}
\Big(x_{1}^{i_{1}}x_{2}^{i_{2}}\partial_{1}
\otimes\Big(x_{1}^{j_{1}}x_{2}^{j_{2}}\partial_{1}\otimes
x_{1}^{-i_{1}-j_{1}}x_{2}^{1-i_{2}-j_{2}+1}\partial_{1}
-x_{1}^{j_{1}}x_{2}^{j_{2}}\partial_{2}\otimes
x_{1}^{-i_{1}-j_{1}+1}x_{2}^{1-i_{2}-j_{2}}\partial_{1}\Big)\\[-4mm]
&\qquad\qquad\quad-x_{1}^{i_{1}}x_{2}^{i_{2}}\partial_{2}\otimes\Big(
x_{1}^{j_{1}}x_{2}^{j_{2}}\partial_{1}\otimes
x_{1}^{1-i_{1}-j_{1}}x_{2}^{-i_{2}-j_{2}+1}\partial_{1}
-x_{1}^{j_{1}}x_{2}^{j_{2}}\partial_{2}\otimes
x_{1}^{1-i_{1}-j_{1}+1}x_{2}^{-i_{2}-j_{2}}\partial_{1}\Big)\Big),
\end{align*}
and
\begin{align*}
&\;(\nu\,\hat{\otimes}\,\id)(\nu(\partial_{1}))\\
=&\;\sum_{i_{1}, i_{2}\in\bz}\sum_{j_{1}, j_{2}\in\bz}
\Big(\Big(x_{1}^{j_{1}}x_{2}^{j_{2}}\partial_{1}\otimes
x_{1}^{i_{1}-j_{1}}x_{2}^{i_{2}-j_{2}+1}\partial_{1}
-x_{1}^{j_{1}}x_{2}^{j_{2}}\partial_{1}\otimes
x_{1}^{i_{1}-j_{1}+1}x_{2}^{i_{2}-j_{2}}\partial_{1}\Big)
\otimes x_{1}^{-i_{1}}x_{2}^{1-i_{2}}\partial_{1}\\[-4mm]
&\qquad\qquad\quad-\Big(x_{1}^{j_{1}}x_{2}^{j_{2}}\partial_{1}\otimes
x_{1}^{i_{1}-j_{1}}x_{2}^{i_{2}-j_{2}+1}\partial_{2}
-x_{1}^{j_{1}}x_{2}^{j_{2}}\partial_{2}\otimes
x_{1}^{i_{1}-j_{1}+1}x_{2}^{i_{2}-j_{2}}\partial_{2}
\otimes x_{1}^{1-i_{1}}x_{2}^{-i_{2}}\partial_{1}\Big)\Big).
\end{align*}
Comparing the coefficients of $\partial_{2}\otimes\partial_{2}\otimes\partial_{1}$
and $\partial_{1}\otimes\partial_{1}\otimes\partial_{1}$ in equation
$(\id\,\hat{\otimes}\,\Delta)(\Delta(d\otimes\partial_{1}))
=(\Delta\,\hat{\otimes}\,\id)(\Delta(d\otimes\partial_{1}))$, we get
$(\id\otimes\theta_{\succ})\theta_{\succ}=(\theta_{\prec}\otimes\id)\theta_{\succ}
+(\theta_{\succ}\otimes\id)\theta_{\succ}$ and $(\theta_{\succ}\otimes\id)\theta_{\prec}
=(\id\otimes\theta_{\prec})\theta_{\succ}$. Similarly, comparing the coefficients of
$\partial_{1}\otimes\partial_{1}\otimes\partial_{2}$ in equation
$(\id\,\hat{\otimes}\,\Delta)(\Delta(d\otimes\partial_{2}))
=(\Delta\,\hat{\otimes}\,\id)(\Delta(d\otimes\partial_{2}))$, we get
$(\theta_{\prec}\otimes\id)\theta_{\prec}=(\id\otimes\theta_{\prec})\theta_{\prec}
+(\id\otimes\theta_{\succ})\theta_{\prec}$. Thus, $(D, \theta_{\prec}, \theta_{\succ})$
is a dendriform coalgebra. The proof is finished.
\end{proof}

\subsection{Completed ASI bialgebras from dendriform $\md$-bialgebra}\label{subsec:affbia}

In Lemma \ref{lem:l-ass-dual}, we have shown that a quadratic perm algebra induces
a perm coalgebra. Now we extend this result to the case of completed.

\begin{defi}\label{def:quad}
Let $\varpi(-, -)$ be a bilinear form on a $\bz$-graded perm algebra
$(B=\oplus_{i\in\bz}B_{i}, \cdot)$.

$(i)$ $\varpi(-, -)$ is called {\bf invariant}, if $\varpi(b_{1}b_{2},\; b_{3})=
        \varpi(b_{1},\; b_{2}b_{3}-b_{3}b_{2})$ for any $b_{1}, b_{2}, b_{3}\in B$;

$(ii)$ $\varpi(-, -)$ is called {\bf graded}, if there exists some $m\in\bz$
        such that $\varpi(B_{i}, B_{j})=0$ when $i+j+m\neq0$.\\
A {\bf quadratic $\bz$-graded perm algebra}, denoted by $(B=\oplus_{i\in\bz}B_{i},
\cdot, \varpi)$, is a $\bz$-graded perm algebra together with an antisymmetric
invariant nondegenerate graded bilinear form.
In particular, if $B=B_{0}$, it is just the quadratic perm algebra.
\end{defi}

\begin{ex}[\cite{LZB}]\label{ex:qu-perm}
Let $(B=\oplus_{i\in\bz}B_{i}, \cdot)$ be the $\bz$-graded perm algebra given in
Example \ref{ex:grperm}, where $B=\{f_{1}\partial_{1}+f_{2}\partial_{2}\mid f_{1},
f_{2}\in\Bbbk[x_{1}^{\pm}, x_{2}^{\pm}]\}=\oplus_{i\in\bz}B_{i}$. Define an
antisymmetric bilinear form $\varpi(-,-)$ on $(B=\oplus_{i\in\bz}B_{i}, \cdot)$ by
\begin{align*}
&\varpi(x_{1}^{i_{1}}x_{2}^{i_{2}}\partial_{2},\ \ x_{1}^{j_{1}}x_{2}^{j_{2}}\partial_{1})
=-\varpi(x_{1}^{j_{1}}x_{2}^{j_{2}}\partial_{1},\ \ x_{1}^{i_{1}}x_{2}^{i_{2}}\partial_{2})
=\delta_{i_{1}+j_{1}, 0}\delta_{i_{2}+j_{2}, 0}, \\
&\qquad\quad\varpi(x_{1}^{i_{1}}x_{2}^{i_{2}}\partial_{1},\ \
x_{1}^{j_{1}}x_{2}^{j_{2}}\partial_{1})=\varpi(x_{1}^{i_{1}}x_{2}^{i_{2}}\partial_{2},\ \
x_{1}^{j_{1}}x_{2}^{j_{2}}\partial_{2})=0,
\end{align*}
for any $i_{1}, i_{2}, j_{1}, j_{2}\in\bz$.
Then $(B=\oplus_{i\in\bz}B_{i}, \cdot, \varpi)$ is a quadratic $\bz$-graded perm algebra.
Moreover, $\{x_{1}^{-i_{1}}x_{2}^{-i_{2}}\partial_{2}$,\; $-x_{1}^{-i_{1}}
x_{2}^{-i_{2}}\partial_{1}\mid i_{1}, i_{2}\in\bz\}$ is the dual basis of
$\{x_{1}^{i_{1}}x_{2}^{i_{2}}\partial_{1},\; x_{1}^{i_{1}}x_{2}^{i_{2}}\partial_{2}\mid
i_{1}, i_{2}\in\bz\}$ with respect to $\varpi(-,-)$, consisting of homogeneous elements.
\end{ex}

For a quadratic $\bz$-graded perm algebra $(B=\oplus_{i\in\bz}B_{i}, \cdot, \varpi)$,
we have $\varpi(b_{1}b_{2},\; b_{3})=\omega(b_{2},\; b_{1}b_{3})$
for any $b_{1}, b_{2}, b_{3}\in B$. Moreover, the antisymmetric nondegenerate
bilinear form $\varpi(-,-)$ induces bilinear forms
$$
(\underbrace{B\,\hat{\otimes}\,B\,\hat{\otimes}\,\cdots\,\hat{\otimes}\,
B}_{n\text{-fold}})\otimes(\underbrace{B\otimes B\otimes\cdots
\otimes B}_{n\text{-fold}})\longrightarrow\Bbbk,
$$
for all $n\geq2$, which are denoted by $\hat{\varpi}(-,-)$, are defined by
$$
\hat{\varpi}\Big(\sum_{i_{1},\cdots,i_{n},\alpha} x_{1, i_{1}, \alpha}
\otimes\cdots\otimes x_{n, i_{n}, \alpha},\ \ y_{1}\otimes\cdots\otimes y_{n}\Big)
=\sum_{i_{1},\cdots,i_{n},\alpha}\prod_{j=1}^{n}
\varpi(x_{j, i_{j}, \alpha},\; y_{j}).
$$
Then, one can check that $\hat{\varpi}(-,-)$ is {\bf left nondegenerate}, i.e., if
$$
\hat{\varpi}\Big(\sum_{i_{1}, \cdots, i_{n},\alpha}x_{1, i_{1}, \alpha}
\otimes\cdots\otimes x_{n, i_{n}, \alpha},\ \ y_{1}\otimes\cdots\otimes y_{n}\Big)
=\hat{\varpi}\Big(\sum_{i_{1},\cdots,i_{n},\alpha} z_{1, i_{1}, \alpha}
\otimes\cdots\otimes z_{n, i_{n}, \alpha},\ \ y_{1}\otimes\cdots\otimes y_{n}\Big),
$$
for all homogeneous elements $y_{1}, y_{2},\cdots, y_{n}\in B$, then
$$
\sum_{i_{1},\cdots,i_{n},\alpha} x_{1, i_{1}, \alpha}
\otimes\cdots\otimes x_{n, i_{n}, \alpha}
=\sum_{i_{1},\cdots,i_{n},\alpha} z_{1, i_{1}, \alpha}
\otimes\cdots\otimes z_{n, i_{n}, \alpha}.
$$

Similar to Lemma \ref{lem:l-ass-dual}, we have

\begin{lem}\label{lem:comp-dual}
Let $(B=\oplus_{i\in\bz}B_{i}, \cdot, \varpi)$ be a quadratic $\bz$-graded perm algebra.
Define a linear map $\nu_{\varpi}: B\rightarrow B\otimes B$ by
$\hat{\varpi}(\nu_{\omega}(b_{1}),\; b_{2}\otimes b_{3})
=-\varpi(b_{1},\; b_{2}b_{3})$, for any $b_{1}, b_{2}, b_{3}\in B$.
Then $(B, \nu_{\omega})$ is a completed perm coalgebra.
\end{lem}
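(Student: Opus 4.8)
The plan is to mimic the proof of Lemma~\ref{lem:l-ass-dual} (the finite-dimensional case), but now track the $\bz$-grading so that $\nu_{\varpi}$ lands in the completed tensor product $B\,\hat{\otimes}\,B$ and the three defining identities of a completed perm coalgebra hold. First I would verify that $\nu_{\varpi}$ is well defined: for a homogeneous $b_{1}\in B_{i}$, the graded nondegeneracy of $\varpi(-,-)$ (together with left nondegeneracy of $\hat{\varpi}$) pins down $\nu_{\varpi}(b_{1})$ uniquely as an element of $\prod_{p,q\in\bz}B_{p}\otimes B_{q}$, and the grading condition $\varpi(B_{p},B_{q})=0$ unless $p+q+m=0$ forces each component of $\nu_{\varpi}(b_{1})$ to satisfy $p+q=i-\deg(\cdot)$-type constraints, so the expression is a legitimate (possibly infinite) element of the completed tensor product rather than a divergent sum.

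**Next** I would translate the three perm-coalgebra identities
$$(\nu_{\varpi}\,\hat{\otimes}\,\id)\nu_{\varpi}=(\id\,\hat{\otimes}\,\nu_{\varpi})\nu_{\varpi}=(\tau\,\hat{\otimes}\,\id)(\nu_{\varpi}\,\hat{\otimes}\,\id)\nu_{\varpi}$$
into statements about $\hat{\varpi}$ paired against arbitrary homogeneous test tensors $b_{2}\otimes b_{3}\otimes b_{4}$. The key computation is to expand, for instance, $\hat{\varpi}\big((\nu_{\varpi}\,\hat{\otimes}\,\id)\nu_{\varpi}(b_{1}),\,b_{2}\otimes b_{3}\otimes b_{4}\big)$ by applying the defining relation of $\nu_{\varpi}$ twice, reducing it to an expression of the form $\varpi(b_{1},\,b_{2}(b_{3}b_{4}))$ up to sign; doing the same for the other two compositions yields $\varpi(b_{1},\,(b_{2}b_{3})b_{4})$ and $\varpi(b_{1},\,(b_{3}b_{2})b_{4})$. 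The three perm-algebra axioms $b_{2}(b_{3}b_{4})=(b_{2}b_{3})b_{4}=(b_{3}b_{2})b_{4}$ then give equality of the three pairings against every homogeneous triple. Finally, left nondegeneracy of $\hat{\varpi}$ (established earlier in the excerpt) promotes these pairing equalities to equalities of the underlying completed tensors, which is exactly the completed perm-coalgebra condition.

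**The main obstacle** I expect is bookkeeping rather than conceptual: correctly matching the completed twist $\hat{\tau}$ and the placement of $\varpi$ versus $\hat{\varpi}$ at each stage, and confirming that applying $\nu_{\varpi}$ inside a tensor slot (i.e.\ $(\nu_{\varpi}\,\hat{\otimes}\,\id)$) genuinely produces a well-defined element of the triple completed tensor product $B\,\hat{\otimes}\,B\,\hat{\otimes}\,B$ with no convergence issues. Here the grading does the work: each application of $\nu_{\varpi}$ shifts degrees in a controlled way dictated by the graded condition in Definition~\ref{def:quad}, so the iterated sums remain locally finite in each fixed total degree, and the pairing against a homogeneous triple selects only finitely many terms. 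Once this finiteness is checked, the argument closes by the same invariance/nondegeneracy mechanism as in Lemma~\ref{lem:l-ass-dual}, with the three perm axioms replacing the single associativity identity used there.
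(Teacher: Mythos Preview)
Your proposal is correct and matches the paper's intended approach: the paper does not give an explicit proof, stating only that the lemma follows ``similar to Lemma~\ref{lem:l-ass-dual}'', and what you outline is precisely that analogy carried through with the graded/completed bookkeeping. One small slip: the pairing $\hat{\varpi}\big((\nu_{\varpi}\,\hat{\otimes}\,\id)\nu_{\varpi}(b_{1}),\,b_{2}\otimes b_{3}\otimes b_{4}\big)$ actually reduces to $\varpi(b_{1},(b_{2}b_{3})b_{4})$ rather than $\varpi(b_{1},b_{2}(b_{3}b_{4}))$ (and vice versa for $(\id\,\hat{\otimes}\,\nu_{\varpi})\nu_{\varpi}$), but since the perm axioms make all three expressions equal this swap has no effect on the argument.
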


\begin{ex}[\cite{LZB}]\label{ex:ind-coperm}
Consider the quadratic $\bz$-graded perm algebra $(B=\oplus_{i\in\bz}B_{i}, \cdot, \varpi)$
given in Example \ref{ex:qu-perm}. Then the induced completed perm coalgebra
$(B=\oplus_{i\in\bz}B_{i}, \nu_{\omega})$ is just the completed perm coalgebra
$(B=\oplus_{i\in\bz}B_{i}, \nu)$ given in Example \ref{ex:grpermco}.
\end{ex}

Next, we extend the dendriform algebra affinization and the dendriform coalgebra
affinization to dendriform $\md$-bialgebras.

\begin{defi}[\cite{Bai}]\label{def:D-bialg}
A {\bf dendriform $\md$-bialgebra} is a quintuple $(D, \prec, \succ, \theta_{\prec},
\theta_{\succ})$, where $(D, \prec$, $\succ)$ is a dendriform algebra,
$(D, \theta_{\prec}, \theta_{\succ})$ is a dendriform coalgebra and the following
compatible conditions hold:
\begin{align}
&\qquad \theta_{\prec}(d_{1}\prec d_{2}+d_{1}\succ d_{2})
=(\id\otimes\fl_{\succ}(d_{1}))(\theta_{\prec}(d_{2}))
+((\fr_{\prec}+\fr_{\succ})(d_{2})\otimes\id)(\theta_{\prec}(d_{1})),   \label{D-bi1}\\
&\qquad \theta_{\succ}(d_{1}\prec d_{2}+d_{1}\succ d_{2})
=(\id\otimes(\fl_{\prec}+\fl_{\succ})(d_{1}))(\theta_{\succ}(d_{2}))
+(\fr_{\prec}(d_{2})\otimes\id)(\theta_{\succ}(d_{1})),                 \label{D-bi2}\\
&\quad\; \theta_{\prec}(d_{1}\prec d_{2})+\theta_{\succ}(d_{1}\prec d_{2})
=(\id\otimes\fl_{\prec}(d_{1}))(\theta_{\succ}(d_{2}))
+(\fr_{\prec}(d_{2})\otimes\id)((\theta_{\prec}+\theta_{\succ})(d_{1})), \label{D-bi3}\\
&\quad\; \theta_{\prec}(d_{1}\succ d_{2})+\theta_{\succ}(d_{1}\succ d_{2})
=(\id\otimes\fl_{\prec}(d_{1}))((\theta_{\prec}+\theta_{\succ})(d_{2}))
+(\fr_{\prec}(d_{2})\otimes\id)(\theta_{\prec}(d_{1})),                  \label{D-bi4}\\
&\; \big((\fl_{\prec}+\fl_{\succ})(d_{1})\otimes\id-\id\otimes\fr_{\prec}(d_{1})\big)
(\theta_{\prec}(d_{2}))
=-\tau\big(\big(\fl_{\succ}(d_{2})\otimes\id-\id\otimes(\fr_{\prec}+\fr_{\succ})(d_{2})\big)
(\theta_{\succ}(d_{1}))\big),                                            \label{D-bi5}\\
&\big(\fl_{\succ}(d_{2})\otimes\id-\id\otimes\fr_{\prec}(d_{2})\big)
\big((\theta_{\prec}+\theta_{\succ})(d_{1})\big)
=\tau\big((\id\otimes\fr_{\succ}(d_{2}))(\theta_{\succ}(d_{2}))
-(\fl_{\prec}(d_{2})\otimes\id)(\theta_{\prec}(d_{2}))\big),             \label{D-bi6}
\end{align}
for any $d_{1}, d_{2}\in D$.
\end{defi}

Recall that an {\bf antisymmetric infinitesimal bialgebra}, or simply an ASI bialgebra
is a triple $(A, \ast, \Delta)$ consisting of a vector space $A$ and linear maps
$\ast: A\otimes A\rightarrow A$ and $\Delta: A\rightarrow A\otimes A$ such that
$(A, \ast)$ is an associative algebra, $(A, \Delta)$ is a coassociative coalgebra
and for any $a_{1}, a_{2}\in A$,
\begin{align*}
&\qquad\quad\Delta(a_{1}\ast a_{2})=(\fr_{A}(a_{2})\otimes\id)(\Delta(a_{1}))
+(\id\otimes\,\fl_{A}(a_{1}))(\Delta(a_{2})), \\
&\big(\fl_{A}(a_{1})\otimes\id-\id\otimes\,\fr_{A}(a_{1})\big)(\Delta(a_{2}))
=\tau\big(\big(\id\otimes\,\fr_{A}(a_{2})
-\fl_{A}(a_{2})\otimes\id\big)(\Delta(a_{1}))\big).
\end{align*}
We now give the notion and results on completed ASI bialgebras.

\begin{defi}[\cite{LZB}]\label{def:CASIbia}
A {\bf completed ASI bialgebra} is a triple $(A, \ast, \Delta)$ consisting of a
vector space $A$ and linear maps $\ast: A\otimes A\rightarrow A$ and
$\Delta: A\rightarrow A\otimes A$ such that
\begin{enumerate}\itemsep=0pt
\item[$(i)$] $(A, \ast)$ is a $\bz$-graded associative algebra;
\item[$(ii)$] $(A, \Delta)$ is a completed coassociative coalgebra;
\item[$(iii)$] for any $a_{1}, a_{2}\in A$,
\begin{align}
&\qquad\qquad\Delta(a_{1}\ast a_{2})=(\fr_{A}(a_{2})\,\hat{\otimes}\,\id)
(\Delta(a_{1}))+(\id\,\hat{\otimes}\,\fl_{A}(a_{1}))(\Delta(a_{2})),   \label{CASI1} \\
&\big(\fl_{A}(a_{1})\,\hat{\otimes}\,\id-\id\,\hat{\otimes}\,\fr_{A}(a_{1})\big)
(\Delta(a_{2}))=\hat{\tau}\big(\big(\id\,\hat{\otimes}\,\fr_{A}(a_{2})
-\fl_{A}(a_{2})\,\hat{\otimes}\,\id\big)(\Delta(a_{1}))\big).          \label{CASI2}
\end{align}
\end{enumerate}
\end{defi}

\begin{thm}\label{thm:den-perm-ass}
Let $(D, \prec, \succ, \theta_{\prec}, \theta_{\succ})$ be a finite-dimensional
dendriform $\md$-bialgebra, $(B=\oplus_{i\in\bz}B_{i}, \cdot, \varpi)$ be a quadratic
$\bz$-graded perm algebra and $(D\otimes B, \ast)$ be the induced $\bz$-graded
associative algebra from $(D, \prec, \succ)$ by $(B=\oplus_{i\in\bz}B_{i}, \cdot)$.
Define a linear map $\Delta: D\otimes B\rightarrow(D\otimes B)\otimes(D\otimes B)$ by
\begin{align*}
\Delta(d\otimes b)&=\theta_{\succ}(d)\bullet\nu_{\varpi}(b)
+\theta_{\prec}(d)\bullet\,\hat{\tau}(\nu_{\varpi}(b))\\
&:=\sum_{[d]}\sum_{i,j,\alpha}(d_{[1]}\otimes b_{1,i,\alpha})
\otimes(d_{[2]}\otimes b_{2,j,\alpha})+\sum_{(d)}\sum_{i,j,\alpha}(d_{(1)}\otimes
b_{2,j,\alpha})\otimes(d_{(2)}\otimes b_{1,i,\alpha}),
\end{align*}
for any $d\in D$ and $b\in B$, where $\theta_{\prec}(d)=\sum_{(d)}d_{(1)}
\otimes d_{(2)}$, $\theta_{\succ}(d)=\sum_{[d]}d_{[1]}\otimes d_{[2]}$ in the
Sweedler notation and $\nu_{\varpi}(b)=\sum_{i,j,\alpha}b_{1,i,\alpha}\otimes
b_{2,j,\alpha}$. Then $(D\otimes B, \ast, \Delta)$ is a completed ASI bialgebra,
which is called the {\bf completed ASI bialgebra induced from $(D, \prec, \succ,
\theta_{\prec}, \theta_{\succ})$ by $(B, \cdot, \varpi)$}.

Moreover, if $(B=\oplus_{i\in\bz}B_{i}, \cdot, \varpi)$ is the quadratic $\bz$-graded
perm algebra given in Example \ref{ex:qu-perm}, then $(D\otimes B, \ast, \Delta)$ is
a completed ASI bialgebra if and only if $(D, \prec, \succ, \theta_{\prec},
\theta_{\succ})$ is a dendriform $\md$-bialgebra.
\end{thm}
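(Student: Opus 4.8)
The plan is to verify the three defining conditions of a completed ASI bialgebra from Definition \ref{def:CASIbia} in turn, and then to obtain the converse by specializing $B$ to the perm algebra of Example \ref{ex:qu-perm} and comparing coefficients of suitable monomials. Condition $(i)$ is immediate: since $(D,\prec,\succ)$ is a dendriform algebra and $(B,\cdot)$ is a $\bz$-graded perm algebra, Proposition \ref{pro:aff-dalg} already gives that $(D\otimes B,\ast)$ is a $\bz$-graded associative algebra. Condition $(ii)$ is also essentially known: by Lemma \ref{lem:comp-dual} the map $\nu_{\varpi}$ endows $B$ with a completed perm coalgebra structure, and the displayed formula for $\Delta$ is exactly the one treated in Proposition \ref{pro:dco-coass} with $\nu=\nu_{\varpi}$, so $(D\otimes B,\Delta)$ is a completed coassociative coalgebra. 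Thus the real content is condition $(iii)$, namely the two compatibility identities \eqref{CASI1} and \eqref{CASI2}.

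To establish \eqref{CASI1} I would set $a_{1}=d_{1}\otimes b_{1}$, $a_{2}=d_{2}\otimes b_{2}$, so that $a_{1}\ast a_{2}=(d_{1}\succ d_{2})\otimes(b_{1}b_{2})+(d_{1}\prec d_{2})\otimes(b_{2}b_{1})$, and expand $\Delta(a_{1}\ast a_{2})$ by the definition of $\Delta$. On the $D$-side this produces the four coproducts $\theta_{\succ}(d_{1}\succ d_{2})$, $\theta_{\prec}(d_{1}\succ d_{2})$, $\theta_{\succ}(d_{1}\prec d_{2})$, $\theta_{\prec}(d_{1}\prec d_{2})$, which I would rewrite using the dendriform $\md$-bialgebra axioms \eqref{D-bi1}--\eqref{D-bi4}. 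On the $B$-side the coproducts $\nu_{\varpi}(b_{1}b_{2})$ and $\nu_{\varpi}(b_{2}b_{1})$ appear; using the defining duality of $\nu_{\varpi}$ together with the invariance relations $\varpi(b_{1}b_{2},b_{3})=\varpi(b_{1},b_{2}b_{3}-b_{3}b_{2})$ and $\varpi(b_{1}b_{2},b_{3})=\varpi(b_{2},b_{1}b_{3})$, I would re-express these through $\fl$- and $\fr$-type multiplications applied to $\nu_{\varpi}(b_{1})$ and $\nu_{\varpi}(b_{2})$, analogous to the dual-basis identities used in the proof of Proposition \ref{pro:PLYBE-CYBE}. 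Matching the reassembled terms against the right-hand side $(\fr_{D\otimes B}(a_{2})\,\hat{\otimes}\,\id)(\Delta(a_{1}))+(\id\,\hat{\otimes}\,\fl_{D\otimes B}(a_{1}))(\Delta(a_{2}))$ then yields \eqref{CASI1}. The verification of \eqref{CASI2} is parallel, now invoking the axioms \eqref{D-bi5}--\eqref{D-bi6} on the $D$-side and the antisymmetry of $\varpi$ on the $B$-side, with the completed twist $\hat{\tau}$ on the right-hand side matched against the $\hat{\tau}$ already present in the second summand of $\Delta$.

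For the converse I would specialize $(B,\cdot,\varpi)$ to the quadratic $\bz$-graded perm algebra of Example \ref{ex:qu-perm}, whose induced completed perm coalgebra is the explicit one of Example \ref{ex:grpermco} by Example \ref{ex:ind-coperm}. Assuming $(D\otimes B,\ast,\Delta)$ is a completed ASI bialgebra, Proposition \ref{pro:aff-dalg} forces $(D,\prec,\succ)$ to be a dendriform algebra and Proposition \ref{pro:dco-coass} forces $(D,\theta_{\prec},\theta_{\succ})$ to be a dendriform coalgebra, so only the six compatibility conditions remain. To recover them I would evaluate \eqref{CASI1} and \eqref{CASI2} on elements $d_{i}\otimes\partial_{s}$ with $s\in\{1,2\}$ and, using the explicit products from Example \ref{ex:grperm} and the explicit $\nu$ from Example \ref{ex:grpermco}, compare the coefficients of monomials of the form $x_{1}^{a}x_{2}^{b}\partial_{s}\otimes x_{1}^{c}x_{2}^{d}\partial_{t}$ on the two sides. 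Each axiom \eqref{D-bi1}--\eqref{D-bi6} should fall out from a suitable choice of degrees and derivation indices, in the same spirit as the dendriform axioms were extracted by comparing coefficients of $x_{1}^{2}\partial_{2}$ in the proof of Proposition \ref{pro:aff-dalg}.

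The main obstacle is the bookkeeping in condition $(iii)$. Because $\Delta$ is a sum of two terms---one built from $\theta_{\succ}$ paired with $\nu_{\varpi}$, the other from $\theta_{\prec}$ paired with $\hat{\tau}(\nu_{\varpi})$---and because the completed tensor products carry infinite index sets, one must track carefully which Sweedler legs of $\theta_{\prec},\theta_{\succ}$ pair with which legs of $\nu_{\varpi}$, and where each twist $\hat{\tau}$ lands. Conceptually the situation is clean: the $D$-dependence is governed entirely by the six axioms \eqref{D-bi1}--\eqref{D-bi6}, the $B$-dependence entirely by the invariance and antisymmetry of $\varpi$ and the dual-basis identities for $\nu_{\varpi}$, and condition $(iii)$ is precisely the assertion that these two packets of data are compatible under the pairing. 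The delicate point is to check that every term generated on the $D$-side is absorbed exactly by a matching perm identity on the $B$-side, leaving no residue.
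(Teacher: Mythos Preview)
Your proposal is correct and follows essentially the same approach as the paper: conditions $(i)$ and $(ii)$ are disposed of via Proposition~\ref{pro:aff-dalg}, Lemma~\ref{lem:comp-dual}, and Proposition~\ref{pro:dco-coass}; the compatibility \eqref{CASI1} is verified by expanding both sides, deriving identities for $\nu_{\varpi}(bb')$, $\hat{\tau}(\nu_{\varpi}(bb'))$, $\nu_{\varpi}(b'b)$, etc.\ from the invariance of $\varpi$, and then grouping the result into three expressions that vanish by \eqref{D-bi1}--\eqref{D-bi3} (with \eqref{CASI2} handled analogously via \eqref{D-bi5}--\eqref{D-bi6}); and the converse is obtained exactly as you describe, by evaluating \eqref{CASI1}--\eqref{CASI2} on $d\otimes\partial_{s}$, $d'\otimes\partial_{t}$ and reading off each axiom \eqref{D-bi1}--\eqref{D-bi6} as the coefficient of a specific $\partial_{s}\otimes\partial_{t}$.
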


\begin{proof}
By Proposition \ref{pro:dco-coass} and Lemma \ref{lem:comp-dual}, we get
$(D\otimes B, \Delta)$ is a completed coassociative coalgebra. Thus, we only
need to show that Eqs. \eqref{CASI1} and \eqref{CASI2} hold.
For any $d, d'\in D$ and $b, b'\in B$, we have
\begin{align*}
&\;\Delta((d\otimes b)\ast(d'\otimes b'))
-(\fr_{D\otimes B}(d'\otimes b')\,\hat{\otimes}\,\id)(\Delta(d\otimes b))
-(\id\,\hat{\otimes}\,\fl_{D\otimes B}(d\otimes b))(\Delta(d'\otimes b'))\\
=&\;\Delta((d\succ d')\otimes(bb')+(d\prec d')\otimes(b'b))
-(\fr_{D\otimes B}(d'\otimes b')\,\hat{\otimes}\,\id)
\big(\theta_{\succ}(d)\bullet\nu_{\omega}(b)
+\theta_{\prec}(d)\bullet\hat{\tau}(\nu_{\varpi}(b))\big)\\[-1mm]
&\quad-(\id\,\hat{\otimes}\,\fl_{D\otimes B}(d\otimes b))\big(\theta_{\succ}(d')
\bullet\nu_{\varpi}(b')+\theta_{\prec}(d')\bullet\hat{\tau}(\nu_{\varpi}(b'))\big)\\
=&\;\theta_{\succ}(d\succ d')\bullet\nu_{\varpi}(bb')
+\theta_{\prec}(d\succ d')\bullet\hat{\tau}(\nu_{\varpi}(bb'))
+\theta_{\succ}(d\prec d')\bullet\nu_{\varpi}(b'b)
+\theta_{\prec}(d\prec d')\bullet\hat{\tau}(\nu_{\varpi}(b'b))
\end{align*}
\begin{align*}
&\;-\sum_{[d]}\sum_{i,j,\alpha}\Big(\big((d_{[1]}\succ d')\otimes(b_{1,i,\alpha}b')\big)
\otimes(d_{[2]}\otimes b_{2,j,\alpha})+\big((d_{[1]}\prec d')
\otimes(b'b_{1,i,\alpha})\big)\otimes(d_{[2]}\otimes b_{2,j,\alpha})\Big)\\[-2mm]
&\;-\sum_{(d)}\sum_{i,j,\alpha}\Big(\big((d_{(1)}\succ d')\otimes(b_{2,j,\alpha}b')\big)
\otimes(d_{(2)}\otimes b_{1,i,\alpha})+\big((d_{(1)}\prec d')
\otimes(b'b_{2,j,\alpha})\big)\otimes(d_{(2)}\otimes b_{1,i,\alpha})\Big)\\[-2mm]
&\;-\sum_{[d']}\sum_{i,j,\alpha}\Big((d'_{[1]}\otimes b'_{1,i,\alpha})\otimes
\big((d\succ d'_{[2]})\otimes(bb'_{2,j,\alpha})\big)+(d'_{[1]}\otimes b'_{1,i,\alpha})
\otimes\big((d\prec d'_{[2]})\otimes(b'_{2,j,\alpha}b)\big)\Big)\\[-2mm]
&\;-\sum_{(d')}\sum_{i,j,\alpha}\Big((d'_{(1)}\otimes b'_{2,j,\alpha})\otimes
\big((d\succ d'_{(2)})\otimes(bb'_{1,i,\alpha})\big)+(d'_{(1)}\otimes b'_{2,j,\alpha})
\otimes\big((d\prec d'_{(2)})\otimes(b'_{1,i,\alpha}b)\big)\Big).
\end{align*}
Since $\varpi(-,-)$ on $(B, \cdot)$ is invariant, we have
\begin{align*}
\hat{\varpi}(\nu_{\varpi}(bb'),\; e\otimes f)
&=-\varpi(bb',\; ef)=-\varpi(b',\; b(ef)),\\
\hat{\varpi}\Big(\sum_{i,j,\alpha}((bb'_{1,i,\alpha})\otimes
b'_{2,j,\alpha}),\; e\otimes f\Big)
&=-\varpi(b',\; (be)f)=-\varpi(b',\; b(ef)),
\end{align*}
for any $b, b', e, f\in B$. Since $\hat{\varpi}(-,-)$ is left nondegenerate,
we obtain $\nu_{\varpi}(bb')=\sum_{i,j,\alpha}((bb'_{1,i,\alpha})\otimes
b'_{2,j,\alpha})$. Similarly, we also have
\begin{align*}
&\qquad\qquad\sum_{i,j,\alpha}((b_{1,i,\alpha}b')\otimes b_{2,j,\alpha})
=\sum_{i,j,\alpha}(b'_{2,j,\alpha}\otimes(b'_{1,i,\alpha}b))=0,\\[-2mm]
&\qquad\; \hat{\tau}(\nu_{\varpi}(b'b))=\sum_{i,j,\alpha}((b'b_{2,j,\alpha})
\otimes b_{1,i,\alpha})=\hat{\tau}(\nu_{\varpi}(bb'))-\Phi,\\[-2mm]
&\qquad \hat{\tau}(\nu_{\varpi}(bb'))=\sum_{i,j,\alpha}((b_{2,j,\alpha}b')
\otimes b_{1,i,\alpha})=\sum_{i,j,\alpha}(b'_{2,j,\alpha}\otimes(bb'_{1,i,\alpha})),\\[-2mm]
& \nu_{\varpi}(b'b)=\sum_{i,j,\alpha}((b'b_{1,i,\alpha})\otimes b_{2,j,\alpha})
=\sum_{i,j,\alpha}(b_{1,i,\alpha}\otimes(b'b_{(2,j,\alpha)}))=\nu_{\varpi}(bb')-\Phi,
\end{align*}
where $\Phi\in B\otimes B$ such that $\hat{\varpi}(\Phi,\; e\otimes f)
=\varpi(b',\; e(fb))$. Thus,
\begin{align*}
&\;\Delta((d\otimes b)\ast(d'\otimes b'))
-(\fr_{D\otimes B}(d'\otimes b')\,\hat{\otimes}\,\id)(\Delta(d\otimes b))
-(\id\,\hat{\otimes}\,\fl_{D\otimes B}(d\otimes b))(\Delta(d'\otimes b'))\\
=&\;\theta_{\succ}(dd')\bullet\nu_{\varpi}(bb')
+\theta_{\prec}(d\succ d')\bullet\hat{\tau}(\nu_{\varpi}(bb'))
+\theta_{\succ}(d\prec d')\bullet\big(\nu_{\varpi}(bb')-\Phi\big)\\
&\quad+\theta_{\prec}(d\prec d')\bullet\big(\hat{\tau}(\nu_{\varpi}(bb')-\Phi)\big)
-(\fr_{\prec}(d')\otimes\id)(\theta_{\succ}(d))\bullet\big(\nu_{\varpi}(bb')-\Phi\big)\\
&\quad-(\fr_{\succ}(d')\otimes\id)(\theta_{\prec}(d))\bullet\hat{\tau}(\nu_{\varpi}(bb'))
-(\fr_{\prec}(d')\otimes\id)(\theta_{\prec}(d))\bullet
\big(\hat{\tau}(\nu_{\varpi}(bb'))-\Phi\big)\\
&\quad-(\id\otimes\fl_{\succ}(d))(\theta_{\succ}(d'))\bullet\nu_{\varpi}(bb')
-(\id\otimes\fl_{\prec}(d))(\theta_{\succ}(d'))\bullet\big(\nu_{\varpi}(bb')-\Phi\big)\\
&\quad-(\id\otimes\fl_{\succ}(d))(\theta_{\prec}(d'))\bullet\hat{\tau}(\nu_{\varpi}(bb'))\\
=&\;\Big(\theta_{\succ}(d\succ d')+\theta_{\succ}(d\prec d')
-(\fr_{\prec}(d')\otimes\id)(\theta_{\succ}(d))\\[-2mm]
&\qquad-(\id\otimes\fl_{\succ}(d))(\theta_{\succ}(d'))
-(\id\otimes\fl_{\prec}(d))(\theta_{\succ}(d'))\Big)\bullet\nu_{\varpi}(bb')\\[-1mm]
&\quad+\Big(\theta_{\prec}(d\succ d')+\theta_{\prec}(d\prec d')
-(\fr_{\succ}(d')\otimes\id)(\theta_{\prec}(d))\\[-2mm]
&\qquad-(\fr_{\prec}(d')\otimes\id)(\theta_{\prec}(d))
-(\id\otimes\fl_{\succ}(d))(\theta_{\prec}(d'))\Big)
\bullet\hat{\tau}(\nu_{\varpi}(bb'))\\[-1mm]
&\quad-\Big(\theta_{\succ}(d\prec d')+\theta_{\prec}(d\prec d')
-(\fr_{\prec}(d')\otimes\id)(\theta_{\succ}(d))\\[-2mm]
&\qquad-(\fr_{\prec}(d')\otimes\id)(\theta_{\prec}(d))
-(\id\otimes\fl_{\prec}(d))(\theta_{\succ}(d'))\Big)\bullet\Phi\\
=&\; 0.
\end{align*}
Similarly, we also have $\big(\fl_{D\otimes B}(d\otimes b)\,\hat{\otimes}\,\id
-\id\,\hat{\otimes}\,\fr_{D\otimes B}(d\otimes b)\big)(\Delta(d'\otimes b'))
=\tau\big(\big(\id\,\hat{\otimes}\,\fr_{D\otimes B}(d'\otimes b')
-\fl_{D\otimes B}(d'\otimes b')\,\hat{\otimes}\,\id\big)(\Delta(d\otimes b))\big)$,
for any $d, d'\in D$ and $b, b'\in B$. Thus, $(D\otimes B, \ast, \Delta)$ is a
completed ASI bialgebra.

Conversely, if $(B=\oplus_{i\in\bz}B_{i}, \cdot, \varpi)$ is the quadratic $\bz$-graded
perm algebra given in Example \ref{ex:qu-perm} and $(D\otimes B, \ast, \Delta)$ is
a completed ASI bialgebra, then $(D, \prec$, $\succ)$ is a
dendriform algebra and $(D, \theta_{\prec}, \theta_{\succ})$ is a dendriform coalgebra
by Propositions \ref{pro:aff-dalg} and \ref{pro:dco-coass} respectively.
Now we only need to prove that $(D, \prec, \succ, \theta_{\prec}, \theta_{\succ})$ is a
dendriform $\md$-bialgebra. Since $(D\otimes B, \ast, \Delta)$ is a completed ASI
bialgebra, we have
\begin{align*}
0=&\;\Delta((d\otimes\partial_{1})\ast(d'\otimes\partial_{1}))
-(\fr_{D\otimes B}(d'\otimes\partial_{1})\,\hat{\otimes}\,\id)(\Delta(d\otimes\partial_{1}))
-(\id\,\hat{\otimes}\,\fl_{D\otimes B}(d\otimes\partial_{1}))(\Delta(d'\otimes\partial_{1}))\\
=&\;\Delta((d\succ d')\otimes(x_{1}\partial_{1})+(d\prec d')\otimes(x_{1}\partial_{1}))
-(\fr_{D\otimes B}(d'\otimes\partial_{1})\,\hat{\otimes}\,\id)
\big(\theta_{\succ}(d)\bullet\nu_{\omega}(\partial_{1})\\[-1mm]
&\quad+\theta_{\prec}(d)\bullet\hat{\tau}(\nu_{\omega}(\partial_{1}))\big)
-(\id\,\hat{\otimes}\,\fl_{D\otimes B}(d\otimes\partial_{1}))\big(\theta_{\succ}(d')
\bullet\nu_{\omega}(\partial_{1})
+\theta_{\prec}(d')\bullet\hat{\tau}(\nu_{\omega}(\partial_{1}))\big)\\
=&\;\big(\theta_{\succ}(d\succ d')+\theta_{\succ}(d\prec d')\big)
\bullet\nu_{\varpi}(x_{1}\partial_{1})
+\big(\theta_{\prec}(d\succ d')+\theta_{\prec}(d\prec d')\big)
\bullet\,\hat{\tau}(\nu_{\varpi}(x_{1}\partial_{1}))\\
&\quad-\sum_{[d]}\sum_{i_{1},i_{2}}\Big(
\Big((d_{[1]}\succ d')\otimes(x_{1}^{i_{1}+1}x_{2}^{i_{2}}\partial_{1})
+(d_{[1]}\prec d')\otimes(x_{1}^{i_{1}+1}x_{2}^{i_{2}}\partial_{1})\Big)
\otimes(d_{[2]}\otimes(x_{1}^{-i_{1}}x_{2}^{1-i_{2}}\partial_{1}))\\[-5mm]
&\qquad\qquad\quad-\Big((d_{[1]}\succ d')\otimes(x_{1}^{i_{1}}x_{2}^{i_{2}+1}\partial_{1})
+(d_{[1]}\prec d')\otimes(x_{1}^{i_{1}+1}x_{2}^{i_{2}}\partial_{2})\Big)
\otimes(d_{[2]}\otimes(x_{1}^{1-i_{1}}x_{2}^{-i_{2}}\partial_{1}))\Big)\\
&\quad-\sum_{(d)}\sum_{i_{1},i_{2}}\Big(
\Big((d_{(1)}\succ d')\otimes(x_{1}^{i_{1}+1}x_{2}^{i_{2}}\partial_{1})
+(d_{(1)}\prec d')\otimes(x_{1}^{i_{1}+1}x_{2}^{i_{2}}\partial_{1})\Big)
\otimes(d_{(2)}\otimes(x_{1}^{-i_{1}}x_{2}^{1-i_{2}}\partial_{1}))\\[-5mm]
&\qquad\qquad\quad-\Big((d_{(1)}\succ d')\otimes(x_{1}^{i_{1}+1}x_{2}^{i_{2}}\partial_{1})
+(d_{(1)}\prec d')\otimes(x_{1}^{i_{1}+1}x_{2}^{i_{2}}\partial_{1})\Big)
\otimes(d_{(2)}\otimes(x_{1}^{1-i_{1}}x_{2}^{-i_{2}}\partial_{2}))\Big)\\
&\quad-\sum_{[d']}\sum_{i_{1},i_{2}}\Big(
(d'_{[1]}\otimes(x_{1}^{i_{1}}x_{2}^{i_{2}}\partial_{1}))\otimes
\Big((d\succ d'_{[2]})\otimes(x_{1}^{1-i_{1}}x_{2}^{1-i_{2}}\partial_{1})
+(d\prec d'_{[2]})\otimes(x_{1}^{1-i_{1}}x_{2}^{1-i_{2}}\partial_{1})\Big)\\[-5mm]
&\qquad\qquad\quad-(d'_{[1]}\otimes(x_{1}^{i_{1}}x_{2}^{i_{2}}\partial_{2}))\otimes
\Big((d\succ d'_{[2]})\otimes(x_{1}^{2-i_{1}}x_{2}^{-i_{2}}\partial_{1})
+(d\prec d'_{[2]})\otimes(x_{1}^{2-i_{1}}x_{2}^{-i_{2}}\partial_{1})\Big)\Big)\\
&\quad-\sum_{(d')}\sum_{i_{1},i_{2}}\Big(
(d'_{(1)}\otimes(x_{1}^{i_{1}}x_{2}^{i_{2}}\partial_{1}))\otimes
\Big((d\succ d'_{(2)})\otimes(x_{1}^{1-i_{1}}x_{2}^{1-i_{2}}\partial_{1})
+(d\prec d'_{(2)})\otimes(x_{1}^{1-i_{1}}x_{2}^{1-i_{2}}\partial_{1})\Big)\\[-5mm]
&\qquad\qquad\quad-(d'_{(1)}\otimes(x_{1}^{i_{1}}x_{2}^{i_{2}}\partial_{1}))\otimes
\Big((d\succ d'_{(2)})\otimes(x_{1}^{2-i_{1}}x_{2}^{-i_{2}}\partial_{2})
+(d\prec d'_{(2)})\otimes(x_{1}^{1-i_{1}}x_{2}^{1-i_{2}}\partial_{1})\Big)\Big).
\end{align*}
Comparing the coefficients of $\partial_{2}\otimes\partial_{1}$ and $\partial_{1}\otimes
\partial_{2}$ in the equation above, we get Eqs. \eqref{D-bi1} and \eqref{D-bi2} hold.
Similarly, comparing the coefficients of $\partial_{1}\otimes\partial_{1}$ and
$\partial_{2}\otimes\partial_{2}$ in the equation $\Delta((d\otimes\partial_{1})
\ast(d'\otimes\partial_{2}))-(\fr_{D\otimes B}(d'\otimes\partial_{2})\,\hat{\otimes}\,\id)
(\Delta(d\otimes\partial_{1}))-(\id\,\hat{\otimes}\,\fl_{D\otimes B}(d\otimes\partial_{1}))
(\Delta(d'\otimes\partial_{2}))=0$, we get Eqs. \eqref{D-bi3} and \eqref{D-bi4} hold.
Comparing the coefficients of $\partial_{1}\otimes\partial_{2}$ in the equation
$\big(\fl_{D\otimes B}(d\otimes\partial_{1})\,\hat{\otimes}\,\id
-\id\,\hat{\otimes}\,\fr_{D\otimes B}(d\otimes\partial_{1})\big)
(\Delta(d'\otimes\partial_{1}))=\hat{\tau}\big(\big(\id\,\hat{\otimes}\,\fr_{D\otimes B}
(d'\otimes\partial_{1})-\fl_{D\otimes B}(d'\otimes\partial_{1})\,\hat{\otimes}\,\id\big)
(\Delta(d\otimes\partial_{1}))\big)$, we get Eq. \eqref{D-bi5} holds.
Comparing the coefficients of $\partial_{2}\otimes\partial_{2}$ in the equation
$\big(\fl_{D\otimes B}(d\otimes\partial_{2})\,\hat{\otimes}\,\id
-\id\,\hat{\otimes}\,\fr_{D\otimes B}(d\otimes\partial_{2})\big)
(\Delta(d'\otimes\partial_{1}))=\hat{\tau}\big(\big(\id\,\hat{\otimes}\,\fr_{D\otimes B}
(d'\otimes\partial_{1})-\fl_{D\otimes B}(d'\otimes\partial_{1})\,\hat{\otimes}\,\id\big)
(\Delta(d\otimes\partial_{2}))\big)$, we get Eq. \eqref{D-bi6} holds.
Thus, $(D, \prec, \succ, \theta_{\prec}, \theta_{\succ})$ is a dendriform
$\md$-bialgebra in this case.
\end{proof}

We have constructed an infinite-dimensional ASI bialgebra using the affinization
of a dendriform bialgebra. Now let us return to finite-dimensional ASI bialgebras.
First, for a special case of Theorem \ref{thm:den-perm-ass}, where $(B=B_{0}, \cdot,
\omega)$ is a finite-dimensional quadratic perm algebra algebra, we have

\begin{cor}\label{cor:indassbia}
Let $(D, \prec, \succ, \theta_{\prec}, \theta_{\succ})$ be a dendriform
$\md$-bialgebra and $(B, \cdot, \omega)$ be a quadratic perm algebra, and
$(D\otimes B, \ast)$ be the induced associative algebra from $(D, \prec, \succ)$
by $(B, \cdot)$. Define a linear map $\Delta: D\otimes B\rightarrow(D\otimes B)
\otimes(D\otimes B)$ by
\begin{align}
\Delta(d\otimes b)&=\theta_{\succ}(d)\bullet\nu_{\omega}(b)
+\theta_{\prec}(d)\bullet\tau(\nu_{\omega}(b))   \label{ind-coass}\\
&:=\sum_{[d]}\sum_{(b)}(d_{[1]}\otimes b_{(1)})\otimes(d_{[2]}\otimes b_{(2)})
+\sum_{(d)}\sum_{(b)}(d_{(1)}\otimes b_{(2)})\otimes(d_{(2)}\otimes b_{(1)}), \nonumber
\end{align}
for any $d\in D$ and $b\in B$, where $\theta_{\prec}(d)=\sum_{(d)}d_{(1)}
\otimes d_{(2)}$, $\theta_{\succ}(d)=\sum_{[d]}d_{[1]}\otimes d_{[2]}$ and
$\nu_{\omega}(b)=\sum_{(b)}b_{(1)}\otimes b_{(2)}$ in the Sweedler notation.
Then $(D\otimes B, \ast, \Delta)$ is an ASI bialgebra, which is called the {\bf
ASI bialgebra induced from $(D, \prec, \succ,\theta_{\prec}, \theta_{\succ})$
by $(B, \cdot, \omega)$}.
\end{cor}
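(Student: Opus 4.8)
The plan is to deduce this corollary directly from Theorem \ref{thm:den-perm-ass} by regarding the finite-dimensional quadratic perm algebra $(B, \cdot, \omega)$ as a quadratic $\bz$-graded perm algebra concentrated in degree zero. Explicitly, I would set $B = B_0$ with $B_i = 0$ for all $i \neq 0$, so that the grading condition of Definition \ref{def:quad}$(ii)$ holds trivially with $m = 0$ (the only nonzero pairing is $\omega(B_0, B_0)$, corresponding to $0 + 0 + 0 = 0$). This is exactly the degenerate case already flagged in Definition \ref{def:quad}, where a quadratic $\bz$-graded perm algebra with $B = B_0$ is nothing but a quadratic perm algebra. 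Thus $(B = \oplus_{i\in\bz}B_i, \cdot, \omega)$ satisfies the hypotheses of Theorem \ref{thm:den-perm-ass}.

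The crucial point is that all of the completed notions degenerate to their ordinary counterparts once $B$ is finite-dimensional. Since $B$ is concentrated in a single degree, the completed tensor product $U\,\hat{\otimes}\,V = \prod_{i,j\in\bz} U_i \otimes V_j$ collapses to the ordinary tensor product $U \otimes V$; consequently $\hat{\tau}$ reduces to $\tau$, a completed coassociative coalgebra is an ordinary coassociative coalgebra, and a completed ASI bialgebra is an ordinary ASI bialgebra. In the same vein, the bilinear form $\hat{\varpi}$ on the finite tensor powers of $B$ reduces to the extension of $\omega$ used in Lemma \ref{lem:l-ass-dual}, so the completed perm coalgebra structure $\nu_\varpi$ of Lemma \ref{lem:comp-dual} coincides with the perm coalgebra structure $\nu_\omega$ of Lemma \ref{lem:l-ass-dual}. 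Under these identifications, the coproduct defined by Eq. \eqref{ind-coass} is exactly the coproduct $\Delta$ appearing in Theorem \ref{thm:den-perm-ass}, with $\hat{\tau}$ replaced by $\tau$ and $\nu_\varpi$ replaced by $\nu_\omega$.

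With these observations in place the corollary is immediate: Theorem \ref{thm:den-perm-ass} shows that $(D\otimes B, \ast, \Delta)$ is a completed ASI bialgebra, and since $B$ is finite-dimensional this completed ASI bialgebra is precisely an ASI bialgebra in the ordinary sense (verifying Eqs. of the ASI bialgebra axioms rather than their completed analogues \eqref{CASI1}, \eqref{CASI2}). I expect the only real content to be the bookkeeping that (a) a finite-dimensional quadratic perm algebra genuinely fits Definition \ref{def:quad} as a $\bz$-graded object, and (b) the two constructions $\nu_\omega$ and $\nu_\varpi$ of the induced perm coalgebra structure agree on the nose. Both are straightforward verifications, so there is no substantive obstacle here; the corollary is essentially a specialization of the main theorem once the completed framework is recognized to collapse in the finite-dimensional setting.
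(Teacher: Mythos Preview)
Your proposal is correct and matches the paper's approach exactly: the paper presents this corollary as the special case of Theorem~\ref{thm:den-perm-ass} in which $(B=B_{0},\cdot,\omega)$ is a finite-dimensional quadratic perm algebra, with no further proof given. Your discussion of why the completed notions collapse to the ordinary ones when $B$ is concentrated in degree zero simply spells out what the paper leaves implicit.
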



\subsection{Triangular ASI bialgebras from triangular dendriform $\md$-bialgebras}
\label{subsec:triASI}

Recall that an ASI bialgebra $(A, \ast, \Delta)$ is called {\bf coboundary} if
there exists an element $r=\sum_{i}x_{i}\otimes y_{i}\in A\otimes A$ such that
$\Delta=\Delta_{r}$, where
\begin{align}
\Delta_{r}(a)=(\id\otimes\fl_{A}(a)-\fr_{A}(a)\otimes\id)(r), \label{ass-cobo}
\end{align}
for any $a\in A$. The equation
$$
\mathbf{A}_{r}:=r_{12}\ast r_{13}+r_{13}\ast r_{23}-r_{23}\ast r_{12}=0
$$
is called the {\bf associative Yang-Baxter equation} (or, $\AYBE$) in $(A, \ast)$,
where $r_{12}\ast r_{13}=\sum_{i,j}(x_{i}\ast x_{j})\otimes y_{i}\otimes y_{j}$,
$r_{23}\ast r_{13}=\sum_{i,j}x_{j}\otimes x_{i}\otimes(y_{i}\ast y_{j})$ and
$r_{23}\ast r_{12}=\sum_{i,j}x_{j}\otimes(x_{i}\ast y_{j})\otimes y_{i}$.

\begin{pro}[\cite{Bai}]\label{pro:triASI}
Let $(A, \ast)$ be an associative algebra and $r\in A\otimes A$. Define a linear map
$\Delta_{r}: A\rightarrow A\otimes A$ by Eq. \eqref{ass-cobo}. If $r$ is a skew-symmetric
solution of the $\AYBE$ in $(A, \ast)$, then $(A, \ast, \Delta_{r})$ is an ASI bialgebra,
which is called a {\bf triangular ASI bialgebra} associated with $r$.
\end{pro}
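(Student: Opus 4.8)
The plan is to verify the three defining properties of an ASI bialgebra for $(A,\ast,\Delta_{r})$. Associativity of $\ast$ is assumed, so what remains is to check that $(A,\Delta_{r})$ is a coassociative coalgebra and that the two compatibility conditions in the definition of an ASI bialgebra hold. Writing $r=\sum_{i}x_{i}\otimes y_{i}$, I first unpack Eq.~\eqref{ass-cobo} to $\Delta_{r}(a)=\sum_{i}x_{i}\otimes(a\ast y_{i})-\sum_{i}(x_{i}\ast a)\otimes y_{i}$, so that each verification reduces to expanding such sums and repeatedly using associativity of $\ast$ together with the skew-symmetry $\tau(r)=-r$.

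First I would dispose of the two compatibility conditions, which are the easier ones. For the first condition $\Delta_{r}(a_{1}\ast a_{2})=(\fr_{A}(a_{2})\otimes\id)(\Delta_{r}(a_{1}))+(\id\otimes\fl_{A}(a_{1}))(\Delta_{r}(a_{2}))$, I substitute the explicit form of $\Delta_{r}$ into both sides; the two mixed terms of shape $(x_{i}\ast a_{2})\otimes(a_{1}\ast y_{i})$ cancel, and the remaining terms match after reassociating $(x_{i}\ast a_{1})\ast a_{2}=x_{i}\ast(a_{1}\ast a_{2})$ and $a_{1}\ast(a_{2}\ast y_{i})=(a_{1}\ast a_{2})\ast y_{i}$. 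Notably this uses only associativity and holds for every $r$. For the second condition I expand the left-hand side $(\fl_{A}(a_{1})\otimes\id-\id\otimes\fr_{A}(a_{1}))(\Delta_{r}(a_{2}))$ and the $\tau$-image on the right into four monomials each; after applying $\tau$ and rewriting each of its terms via the skew-symmetry $\sum_{i}x_{i}\otimes y_{i}=-\sum_{i}y_{i}\otimes x_{i}$ (i.e. swapping $x_{i}\leftrightarrow y_{i}$ at the cost of a sign), the four terms match the left-hand side monomial by monomial. Thus the second compatibility uses skew-symmetry but still not the $\AYBE$.

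The main work, and the step where the $\AYBE$ finally enters, is coassociativity $(\id\otimes\Delta_{r})\Delta_{r}=(\Delta_{r}\otimes\id)\Delta_{r}$. I would expand each side into a double sum over the indices $i,j$ of the two tensor copies of $r$, obtaining four monomials per side, each of the form $u\otimes v\otimes w$ with $a$ in exactly one slot. The strategy is to reorganize the difference of the two sides as the result of letting $a$ act, through $\fl_{A}$ and $\fr_{A}$ in the appropriate tensor slots, on the three constituent terms $r_{12}\ast r_{13}$, $r_{13}\ast r_{23}$, $-r_{23}\ast r_{12}$ of $\mathbf{A}_{r}$, so that the coassociator becomes an $a$-dressed copy of $\mathbf{A}_{r}$; the leftover monomials, which carry the symmetric part of $r$, pair off and cancel under $\tau(r)=-r$. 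Since $r$ is a skew-symmetric solution of the $\AYBE$, we have $\mathbf{A}_{r}=0$ and the coassociator vanishes. I expect this bookkeeping to be the crux: the obstruction is purely combinatorial, namely matching each of the eight monomials of the difference to the correct slotwise insertion of $a$ into $\mathbf{A}_{r}$ and invoking skew-symmetry in exactly the right tensor factor. Once coassociativity is established, combining it with the two compatibility conditions shows that $(A,\ast,\Delta_{r})$ is an ASI bialgebra, as claimed.
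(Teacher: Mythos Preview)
The paper does not give its own proof of this proposition; it is quoted from \cite{Bai} and simply stated as a known fact. So there is no in-paper argument to compare against.

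Your outline is the standard direct verification and is correct in substance. The first compatibility condition indeed holds for any $r$ by associativity alone, and your handling of the second compatibility via skew-symmetry is right on the mark. For coassociativity your description is a bit imprecise: in the general theory the coassociator of $\Delta_{r}$ is expressed as a combination of $a$-actions on $\mathbf{A}_{r}$ \emph{and} $a$-actions on the symmetric part $r+\tau(r)$ (this is why Proposition~\ref{pro:qtass-bia} requires both $\mathbf{A}_{r}=0$ and invariance of $r+\tau(r)$). In the skew-symmetric case both obstructions vanish, so your conclusion stands; just be aware that when you carry out the eight-term bookkeeping, the ``leftover'' terms do not merely cancel pairwise but actually organise into $a$-actions on $r+\tau(r)=0$. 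With that refinement your plan goes through.
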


A dendriform $\md$-bialgebra $(D, \prec, \succ, \theta_{\prec}, \theta_{\succ})$ is
called {\bf coboundary} if there exists two elements $r_{\prec}, r_{\succ}\in D\otimes D$
such that $\theta_{\prec}$ and $\theta_{\succ}$ are given by
\begin{align}
\theta_{\prec}(d)&=\theta_{\prec,r}(d):=((\fr_{\prec}+\fr_{\succ})(d)\otimes\id
-\id\otimes\fl_{\succ}(d))(r_{\prec}),               \label{d-cobo1}\\
\theta_{\succ}(d)&=\theta_{\succ,r}(d):=(\fr_{\prec}(d)\otimes\id
-\id\otimes(\fl_{\prec}+\fl_{\succ})(d))(r_{\succ}),    \label{d-cobo2}
\end{align}
for any $d\in D$. The equation
$$
\mathbf{D}_{r}:=r_{12}\prec r_{13}+r_{12}\succ r_{13}-r_{13}\prec r_{23}-r_{23}\succ r_{12}=0
$$
is called the {\bf dendriform Yang-Baxter equation} (or, $\DYBE$) in $(D, \prec, \succ)$,
where $r_{12}\prec r_{13}=\sum_{i,j}(x_{i}\prec x_{j})\otimes y_{i}\otimes y_{j}$,
$r_{12}\succ r_{13}=\sum_{i,j}(x_{i}\succ x_{j})\otimes y_{i}\otimes y_{j}$,
$r_{13}\prec r_{23}=\sum_{i,j}x_{i}\otimes x_{j}\otimes(y_{i}\prec y_{j})$ and
$r_{23}\succ r_{12}=\sum_{i,j}x_{j}\otimes(x_{i}\succ y_{j})\otimes y_{i}$ if we denote
$r=\sum_{i}x_{i}\otimes y_{i}\in D\otimes D$.

\begin{pro}[\cite{Bai}]\label{pro:tridend}
Let $(D, \prec, \succ)$ be a dendriform algebra and $r\in D\otimes D$. Define linear
maps $\theta_{\prec, r}, \theta_{\succ, r}: D\rightarrow D\otimes D$ by
Eqs. \eqref{d-cobo1} and \eqref{d-cobo2} for $r_{\prec}=r$ and $r_{\succ}=-\tau(r)$.
If $r$ is a symmetric solution of the $\DYBE$ in $(D, \prec, \succ)$, then
$(D, \prec, \succ, \theta_{\prec,r}, \theta_{\succ,r})$ is a dendriform $\md$-bialgebra,
which is called a {\bf triangular dendriform $\md$-bialgebra} associated with $r$.
\end{pro}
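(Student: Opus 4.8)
The plan is to verify, by direct substitution, that the quintuple $(D, \prec, \succ, \theta_{\prec,r}, \theta_{\succ,r})$ satisfies every defining condition of a dendriform $\md$-bialgebra in Definition~\ref{def:D-bialg}. Since $(D,\prec,\succ)$ is a dendriform algebra by hypothesis, three families of conditions remain: the four compatibility conditions \eqref{D-bi1}--\eqref{D-bi4}, the two ``flip'' compatibility conditions \eqref{D-bi5}--\eqref{D-bi6}, and the three coassociativity-type identities asserting that $(D,\theta_{\prec,r},\theta_{\succ,r})$ is a dendriform coalgebra. Throughout I would insert the coboundary expressions \eqref{d-cobo1} and \eqref{d-cobo2}, noting that because $r$ is symmetric we have $r_{\succ}=-\tau(r)=-r$, so that both comultiplications are governed by the single tensor $r=\sum_i x_i\otimes y_i$ and by the operators $\fl_{\prec},\fr_{\prec},\fl_{\succ},\fr_{\succ}$ of the regular bimodule.

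First I would dispose of the compatibility conditions \eqref{D-bi1}--\eqref{D-bi4}, which I expect to hold for an \emph{arbitrary} $r$, needing neither symmetry nor the $\DYBE$. These are the analogues of a $1$-cocycle identity. For instance, in \eqref{D-bi1} the left side is $\theta_{\prec,r}(d_{1}\ast d_{2})$ (where $d_1\ast d_2=d_1\prec d_2+d_1\succ d_2$), and after substituting \eqref{d-cobo1} the two sides match once one uses the associative identity $\fr_{D}(d_{1}\ast d_{2})=\fr_{D}(d_{2})\fr_{D}(d_{1})$ on the first tensor slot and the bimodule relation $\fl_{\succ}(d_{1}\ast d_{2})=\fl_{\succ}(d_{1})\fl_{\succ}(d_{2})$ from Definition~\ref{def:mod-Dalg} on the second; the cross terms cancel automatically. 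The same mechanism, drawing on the remaining relations of Definition~\ref{def:mod-Dalg} and the identities $\fl_{D}=\fl_{\prec}+\fl_{\succ}$, $\fr_{D}=\fr_{\prec}+\fr_{\succ}$, handles \eqref{D-bi2}--\eqref{D-bi4}. This step is purely mechanical.

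Next I would treat the two conditions \eqref{D-bi5}--\eqref{D-bi6}, which carry a flip $\tau$ and a sign on the right. Here the choice $r_{\succ}=-\tau(r)$ together with the hypothesis $r=\tau(r)$ is exactly what is needed: substituting the coboundary forms expresses the left side as a combination of terms $(\text{op})(x_i)\otimes(\text{op})(y_i)$, while applying $\tau$ on the right interchanges the two tensor legs, i.e.\ relabels $x_i\leftrightarrow y_i$, and symmetry of $r$ identifies the relabelled sum with the original. The two sides then coincide after invoking the bimodule relations. As in the second paragraph, this uses symmetry but \emph{not} the $\DYBE$.

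The heart of the argument, and the step I expect to be the main obstacle, is proving that $(D,\theta_{\prec,r},\theta_{\succ,r})$ is a dendriform coalgebra. This is where $\mathbf{D}_{r}=0$ enters. I would expand each associator difference, such as $(\theta_{\prec,r}\otimes\id)\theta_{\prec,r}-(\id\otimes\theta_{\prec,r})\theta_{\prec,r}-(\id\otimes\theta_{\succ,r})\theta_{\prec,r}$, into a sum over the tensor square $r\otimes r=\sum_{i,j}x_{i}\otimes y_{i}\otimes x_{j}\otimes y_{j}$, producing a large number of cubic terms distributed over three tensor slots. The objective is to reorganize them—moving parentheses by the dendriform axioms and realigning the summation indices $i\leftrightarrow j$ using $r=\tau(r)$—until the total collapses into a sum of permuted placements of the dendriform Yang--Baxter element $\mathbf{D}_{r}$ (e.g.\ $\mathbf{D}_{r}$ itself and its images $(\tau\otimes\id)(\mathbf{D}_{r})$, $(\id\otimes\tau)(\mathbf{D}_{r})$). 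Each copy vanishes by hypothesis, so all three coalgebra identities hold. The difficulty is entirely combinatorial: tracking the many terms across three factors, selecting the substitutions that expose the $\mathbf{D}_{r}$ pattern, and checking that symmetry of $r$ is precisely what forces the residual, non-$\mathbf{D}_{r}$ terms to cancel in pairs. Once the coalgebra conditions together with \eqref{D-bi1}--\eqref{D-bi6} are all in hand, Definition~\ref{def:D-bialg} yields that the quintuple is a dendriform $\md$-bialgebra, completing the proof.
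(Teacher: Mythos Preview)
The paper does not supply its own proof of this proposition: it is quoted verbatim from \cite{Bai} and carries no argument here. Your plan is exactly the direct-verification route one finds in the original source, namely expanding $\theta_{\prec,r}$ and $\theta_{\succ,r}$ via \eqref{d-cobo1}--\eqref{d-cobo2}, checking that the bialgebra compatibilities \eqref{D-bi1}--\eqref{D-bi6} follow from the regular-bimodule identities together with the symmetry $r=\tau(r)$, and then showing that the three dendriform-coalgebra axioms reduce to copies of $\mathbf{D}_{r}$ and its $\tau$-permutations.

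One small caution: your expectation that \eqref{D-bi1}--\eqref{D-bi4} hold for an \emph{arbitrary} $r$ is correct for \eqref{D-bi1} and \eqref{D-bi2} separately (each involves only $r_{\prec}$ or only $r_{\succ}$), but \eqref{D-bi3} and \eqref{D-bi4} mix $\theta_{\prec}$ and $\theta_{\succ}$, so their verification already uses the specific link $r_{\succ}=-\tau(r_{\prec})$ together with $r=\tau(r)$, not merely the bimodule identities. This does not affect the validity of your outline, only the bookkeeping of which hypothesis is consumed at which step.
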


Triangular ASI bialgebra (resp. triangular dendriform $\md$-bialgebra) is a special
type of ASI bialgebra (resp. dendriform $\md$-bialgebra) that is related to the
solutions of the Yang-Baxter equation. Next, we consider the relation between the
solutions of the $\DYBE$ in a dendriform algebra and the solutions of the $\AYBE$
in the induced associative algebra.

\begin{pro}\label{pro:DYBE-AYBE}
Let $(D, \prec, \succ)$ be a dendriform algebra and $(B, \cdot, \omega)$ be a
quadratic perm algebra and $(D\otimes B, \ast)$ be the induced associative algebra.
Suppose that $r=\sum_{i}x_{i}\otimes y_{i}\in D\otimes D$ is a solution
of the $\DYBE$ in $(D, \prec, \succ)$. If $r$ is symmetric, then
\begin{align}
\widehat{r}:=\sum_{i, j}(x_{i}\otimes e_{j})\otimes(y_{i}\otimes f_{j})
\in(D\otimes B)\otimes(D\otimes B)  \label{assr-max}
\end{align}
is a skew-symmetric solution of the $\AYBE$ in $(D\otimes B, [-,-])$, where $\{e_{1},
e_{2},\cdots, e_{n}\}$ is a basis of $B$ and $\{f_{1}, f_{2},\cdots, f_{n}\}$
is the dual basis of $\{e_{1}, e_{2},\cdots, e_{n}\}$ with respect to $\omega(-,-)$.
\end{pro}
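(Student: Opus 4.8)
The plan is to verify the two defining properties of a skew-symmetric solution of the $\AYBE$ for $\widehat{r}$ by a direct computation that reduces everything to the hypothesis $\mathbf{D}_r=0$, closely paralleling the proof of Proposition~\ref{pro:PLYBE-CYBE} and Corollary~\ref{cor:sPLYBE-sCYBE}. First I would expand the three terms $\widehat{r}_{12}\ast\widehat{r}_{13}$, $\widehat{r}_{13}\ast\widehat{r}_{23}$ and $\widehat{r}_{23}\ast\widehat{r}_{12}$ using the induced product $(d_1\otimes b_1)\ast(d_2\otimes b_2)=(d_1\succ d_2)\otimes(b_1b_2)+(d_1\prec d_2)\otimes(b_2b_1)$ from Proposition~\ref{pro:perm-dend}. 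Each of the three terms then splits into a $\succ$-part and a $\prec$-part, and each part is a tensor in $D\otimes D\otimes D$ combined via $\bullet$ with a configuration of dual-basis elements in $B\otimes B\otimes B$; for instance $\widehat{r}_{12}\ast\widehat{r}_{13}=\sum_{i,j}\big((x_i\succ x_j)\otimes y_i\otimes y_j\big)\bullet\big(\sum_{p,q}(e_pe_q)\otimes f_p\otimes f_q\big)+\sum_{i,j}\big((x_i\prec x_j)\otimes y_i\otimes y_j\big)\bullet\big(\sum_{p,q}(e_qe_p)\otimes f_p\otimes f_q\big)$, while the other two terms produce the configurations $\sum_{p,q}e_p\otimes(e_qf_p)\otimes f_q$, $\sum_{p,q}e_p\otimes(f_pe_q)\otimes f_q$, $\sum_{p,q}e_p\otimes e_q\otimes(f_pf_q)$ and $\sum_{p,q}e_p\otimes e_q\otimes(f_qf_p)$.

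Next I would rewrite all of these $B\otimes B\otimes B$-configurations in terms of the two canonical ones $\sum_{p,q}(e_pe_q)\otimes f_p\otimes f_q$ and $\sum_{p,q}(e_qe_p)\otimes f_p\otimes f_q$. This is exactly the content of the dual-basis identities already established inside the proof of Proposition~\ref{pro:PLYBE-CYBE}, namely $\sum_{p,q}e_p\otimes(f_pe_q)\otimes f_q=\sum_{p,q}\big((e_qe_p)-(e_pe_q)\big)\otimes f_p\otimes f_q$, $\sum_{p,q}e_p\otimes e_q\otimes(f_pf_q)=\sum_{p,q}\big((e_qe_p)-(e_pe_q)\big)\otimes f_p\otimes f_q$, $\sum_{p,q}e_p\otimes(e_qf_p)\otimes f_q=\sum_{p,q}(e_qe_p)\otimes f_p\otimes f_q$ and $\sum_{p,q}e_p\otimes e_q\otimes(f_qf_p)=-\sum_{p,q}(e_pe_q)\otimes f_p\otimes f_q$, all obtained from the invariance and nondegeneracy of $\omega(-,-)$.

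After substituting these reductions, $\mathbf{A}_{\widehat{r}}=\widehat{r}_{12}\ast\widehat{r}_{13}+\widehat{r}_{13}\ast\widehat{r}_{23}-\widehat{r}_{23}\ast\widehat{r}_{12}$ collects into the two canonical configurations, with $D\otimes D\otimes D$-coefficients that I then recognize, after reindexing by means of the symmetry $r=\tau(r)$, as images of $\mathbf{D}_r$ under permutations of the three tensor slots. Since $r$ is a solution of the $\DYBE$ we have $\mathbf{D}_r=0$, so each coefficient vanishes and hence $\mathbf{A}_{\widehat{r}}=0$. Finally, to check that $\widehat{r}$ is skew-symmetric I would use, exactly as in Corollary~\ref{cor:sPLYBE-sCYBE}, that the antisymmetry and nondegeneracy of $\omega$ force $\sum_j e_j\otimes f_j=-\sum_j f_j\otimes e_j$; combined with the symmetry $r=\tau(r)$ this gives $\tau(\widehat{r})=-\widehat{r}$.

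The hard part will be the bookkeeping in the third step: matching each perm-reduced coefficient to a genuine slot-permutation of $\mathbf{D}_r$, with the correct sign. The symmetry hypothesis $r=\tau(r)$ is indispensable precisely here, since the $\DYBE$ treats $\prec$ and $\succ$ asymmetrically, so only the freedom to interchange $x_i\leftrightarrow y_i$ (and $x_j\leftrightarrow y_j$) granted by $r=\tau(r)$ allows the separate $\succ$- and $\prec$-configurations to recombine into a single copy of $\mathbf{D}_r$; keeping track of which permutation and sign arises for each of the two canonical configurations is the delicate point.
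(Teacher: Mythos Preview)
Your proposal is correct and follows essentially the same route as the paper's proof: expand $\mathbf{A}_{\widehat{r}}$ via the induced product, reduce every $B^{\otimes 3}$-configuration to a combination of $\sum_{p,q}(e_pe_q)\otimes f_p\otimes f_q$ and $\sum_{p,q}(e_qe_p)\otimes f_p\otimes f_q$ using the same dual-basis identities from the proof of Proposition~\ref{pro:PLYBE-CYBE}, and then show that the two resulting $D^{\otimes 3}$-coefficients vanish by the symmetry of $r$ and $\mathbf{D}_r=0$; the skew-symmetry of $\widehat{r}$ is handled exactly as in Corollary~\ref{cor:sPLYBE-sCYBE}. The only cosmetic difference is that the paper writes out the two $D^{\otimes 3}$-coefficients explicitly and asserts they vanish, whereas you phrase the same verification as recognizing slot-permutations of $\mathbf{D}_r$ after using $r=\tau(r)$---your identification of this bookkeeping as the delicate point is accurate.
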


\begin{proof}
First, for any $1\leq p, q\leq n$, we have
\begin{align*}
&\; \widehat{r}_{12}\ast\widehat{r}_{13}+\widehat{r}_{13}\ast\widehat{r}_{23}
-\widehat{r}_{23}\ast\widehat{r}_{12}\\
=&\;\sum_{i,j}\sum_{p,q}\Big(((x_{i}, e_{p})\ast(x_{j}, e_{q}))\otimes(y_{i}, f_{p})
\otimes(y_{j}, f_{q})+(x_{i}, e_{p})\otimes(x_{j}, e_{q})\otimes
((y_{i}, f_{p})\ast(y_{j}, f_{q}))\\[-5mm]
&\qquad\qquad-(x_{j}, e_{q})\otimes((x_{i}, e_{p})
\ast(y_{j}, f_{q}))\otimes(y_{i}, f_{p})\Big)\\
=&\;\sum_{i,j}\sum_{p,q}\Big(((x_{i}\succ x_{j})\otimes(e_{p}e_{q}))\otimes(y_{i}, f_{p})
\otimes(y_{j}, f_{q})+((x_{i}\prec x_{j})\otimes(e_{q}e_{p}))\otimes(y_{i}, f_{p})
\otimes(y_{j}, f_{q})\\[-4mm]
&\qquad\qquad+(x_{i}, e_{p})\otimes(x_{j}, e_{q})\otimes
((y_{i}\succ y_{j})\otimes(f_{p}f_{q}))+(x_{i}, e_{p})\otimes(x_{j}, e_{q})\otimes
((y_{i}\prec y_{j})\otimes(f_{q}f_{p}))\\[-1mm]
&\qquad\qquad-(x_{j}, e_{q})\otimes((x_{i}\succ y_{j})\otimes(e_{p}f_{q}))\otimes(y_{i}, f_{p})
-(x_{j}, e_{q})\otimes((x_{i}\prec y_{j})\otimes(f_{q}e_{p}))\otimes(y_{i}, f_{p})\Big).
\end{align*}
Moreover, for given $s, u, v\in\{1, 2,\cdots, n\}$, we have
$$
\omega\Big(e_{s}\otimes e_{u}\otimes e_{v},\;
\sum_{p,q}(e_{q}e_{p})\otimes f_{p}\otimes f_{q}\Big)
=\omega(e_{s},\; e_{v}e_{u})=\omega\Big(e_{s}\otimes e_{u}\otimes e_{v},\;
\sum_{p,q} e_{q}\otimes(e_{p}f_{q})\otimes f_{p}\Big).
$$
By the nondegeneracy of $\omega(-,-)$, we get $\sum_{p,q}(e_{q}e_{p})\otimes f_{p}
\otimes f_{q}=\sum_{p,q} e_{q}\otimes(e_{p}f_{q})\otimes f_{p}$. Similarly, we also have
$\sum_{p,q} e_{q}\otimes e_{p}\otimes(f_{p}f_{q})=-\sum_{p,q}(e_{p}e_{q})\otimes f_{p}
\otimes f_{q}$ and $\sum_{p,q} e_{q}\otimes(f_{q}e_{p})\otimes f_{p}=
\sum_{p,q} e_{q}\otimes e_{p}\otimes(f_{q}f_{p})=\sum_{p,q}(e_{q}e_{p})\otimes f_{p}
\otimes f_{q}-\sum_{p,q}(e_{p}e_{q})\otimes f_{p}\otimes f_{q}$.
Thus, we obtain
\begin{align*}
&\; \widehat{r}_{12}\ast\widehat{r}_{13}+\widehat{r}_{13}\ast\widehat{r}_{23}
-\widehat{r}_{23}\ast\widehat{r}_{12}\\
=&\;\sum_{i,j}\sum_{p,q}\Big((x_{i}\succ x_{j})\otimes y_{i}\otimes y_{j}
-x_{i}\otimes x_{j}\otimes(y_{i}\succ y_{j})
-x_{i}\otimes x_{j}\otimes(y_{i}\prec y_{j})\\[-5mm]
&\qquad\qquad\qquad\qquad\qquad\qquad+x_{j}\otimes(x_{i}\prec y_{j})\otimes y_{i}
\Big)\bullet\big((e_{p}e_{q})\otimes f_{p}\otimes f_{q}\big)\\[-2mm]
&\quad+\Big((x_{i}\prec x_{j})\otimes y_{i}\otimes y_{j}
+x_{i}\otimes x_{j}\otimes(y_{i}\succ y_{j})
-x_{j}\otimes(x_{i}\succ y_{j})\otimes y_{i}\\[-2mm]
&\qquad\qquad\qquad\qquad\qquad\qquad-x_{j}\otimes(x_{i}\prec y_{j})\otimes y_{i}
\Big)\bullet\big((e_{q}e_{p})\otimes f_{p}\otimes f_{q}\big).
\end{align*}
Since $r$ is symmetric and $\mathbf{D}_{r}=0$, we get
\begin{align*}
\sum_{i,j}\Big((x_{i}\succ x_{j})\otimes y_{i}\otimes y_{j}
-x_{i}\otimes x_{j}\otimes(y_{i}\succ y_{j})-x_{i}\otimes x_{j}\otimes(y_{i}\prec y_{j})
+x_{j}\otimes(x_{i}\prec y_{j})\otimes y_{i}\Big)=0,\\[-2mm]
\sum_{i,j}\Big((x_{i}\prec x_{j})\otimes y_{i}\otimes y_{j}
+x_{i}\otimes x_{j}\otimes(y_{i}\succ y_{j})-x_{j}\otimes(x_{i}\succ y_{j})\otimes y_{i}
-x_{j}\otimes(x_{i}\prec y_{j})\otimes y_{i}\Big)=0.
\end{align*}
Therefore, we get $\widehat{r}_{12}\ast\widehat{r}_{13}
+\widehat{r}_{13}\ast\widehat{r}_{23}-\widehat{r}_{23}\ast\widehat{r}_{12}=0$,
$\widehat{r}$ is a solution of the $\AYBE$ in $(D\otimes B, [-,-])$.
Finally, note that $\sum_{j}e_{j}\otimes f_{j}=-\sum_{j}f_{j}\otimes e_{j}$
(see the proof of Corollary \ref{cor:sPLYBE-sCYBE}). We obtain that $\widehat{r}$
is skew-symmetric if $r$ is symmetric. The proof is finished.
\end{proof}

\begin{rmk}\label{rmk:compASI-Lie}
Similar to \cite{HBG}, we can define the $\AYBE$ in a $\bz$-graded associative algebra.
More precisely, let $(A=\oplus_{i\in\bz}A_{i}, \ast)$ be a $\bz$-graded associative algebra
and $r=\sum_{i,j,\alpha}x_{i,\alpha}\otimes y_{j,\alpha}$. The equation $r_{12}\ast r_{13}
+r_{13}\ast r_{23}-r_{23}\ast r_{12}=0$ is called the $\AYBE$ in $(A=\oplus_{i\in\bz}A_{i},
\ast)$, where $r_{12}\ast r_{13}=\sum_{i,j,k,l,\alpha,\beta}(x_{i,\alpha}\ast x_{k,\beta})
\otimes y_{j,\alpha}\otimes y_{l,\beta}$, $r_{23}\ast r_{13}=\sum_{i,j,k,l,\alpha,\beta}
x_{k,\beta}\otimes x_{i,\alpha}\otimes(y_{j,\alpha}\ast y_{l,\beta})$ and
$r_{23}\ast r_{12}=\sum_{i,j,k,l,\alpha,\beta}x_{k,\beta}\otimes(x_{i,\alpha}\ast
y_{l,\beta})\otimes y_{j,\alpha}$. One can check that a skew-symmetric solution of
the $\AYBE$ in $(A=\oplus_{i\in\bz}A_{i}, \ast)$ induced a completed ASI bialgebra
structure on $A=\oplus_{i\in\bz}A_{i}$. In Proposition \ref{pro:DYBE-AYBE},
if we we replace the quadratic perm algebra $(B, \cdot, \omega)$ with a quadratic
$\bz$-graded perm algebra $(B=\oplus_{i\in\bz}B_{i}, \cdot, \varpi)$ and define
$\widehat{r}:=\sum_{p\in\Phi}\sum_{i}(x_{i}\otimes e_{p})\otimes(y_{i}\otimes f_{p})
\in(D\otimes B)\,\hat{\otimes}\,(D\otimes B)$,
where $\{e_{p}\}_{p\in\Phi}$ is a homogeneous basis of $B$ and $\{f_{p}\}_{p\in\prod}$
is its homogeneous dual basis associated with $\varpi(-,-)$, then one can check that
$\widehat{r}$ is a skew-symmetric solution of the $\AYBE$ in $\bz$-graded associative algebra
$(D\otimes B, \ast)$ if $r$ is a symmetric solution of the $\DYBE$ in $(D, \prec, \succ)$.
\end{rmk}

\begin{pro}\label{pro:spre-ASIbia}
Let $(D, \prec, \succ, \theta_{\prec}, \theta_{\succ})$ be a dendriform $\md$-bialgebra,
$(B, \cdot, \omega)$ be a quadratic perm algebra and $(D\otimes B, \ast)$ be the induced
associative algebra. If $r\in D\otimes D$ is symmetric, $\theta_{\prec}=\theta_{\prec,r}$
and $\theta_{\succ}=\theta_{\succ,r}$ are defined by Eqs. \eqref{d-cobo1} and
\eqref{d-cobo2} for $r_{\prec}=r$ and $r_{\succ}=-\tau(r)$, then $(D\otimes B, \ast,
\Delta)=(D\otimes B, \ast, \Delta_{\widehat{r}})$ as ASI bialgebras, where $\Delta$
is defined by Eq. \eqref{ind-coass} for $\theta_{\prec}=\theta_{\prec,r}$ and
$\theta_{\succ}=\theta_{\succ,r}$, $\widehat{r}$ is given by Eq. \eqref{assr-max}.

Therefore, we obtain that $(D\otimes B, \ast, \Delta)$ is a triangular ASI bialgebra
if $(D, \prec, \succ, \theta_{\prec,r}, \theta_{\succ,r})$ is a triangular
dendriform $\md$-bialgebra.
\end{pro}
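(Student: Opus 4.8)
The plan is to establish the identity $\Delta=\Delta_{\widehat r}$ by a direct comparison of coproducts, modeled on the proof of Theorem~\ref{thm:indu-sLiebia}, and then to deduce triangularity from Propositions~\ref{pro:DYBE-AYBE} and \ref{pro:triASI}. Since Corollary~\ref{cor:indassbia} already guarantees that $(D\otimes B,\ast,\Delta)$ is an ASI bialgebra, I do not need to reverify coassociativity or the compatibility conditions; the only thing to prove for the first assertion is the equality of the two maps $\Delta,\Delta_{\widehat r}\colon D\otimes B\to(D\otimes B)\otimes(D\otimes B)$ on a generator $d\otimes b$. Throughout I write $r=\sum_i x_i\otimes y_i$, fix a basis $\{e_1,\dots,e_n\}$ of $B$ with $\omega$-dual basis $\{f_1,\dots,f_n\}$, so that $\widehat r=\sum_{i,j}(x_i\otimes e_j)\otimes(y_i\otimes f_j)$ as in \eqref{assr-max}.

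First I would expand $\Delta_{\widehat r}(d\otimes b)$ from the coboundary formula \eqref{ass-cobo}, namely $\Delta_{\widehat r}(d\otimes b)=(\id\otimes\fl_{D\otimes B}(d\otimes b)-\fr_{D\otimes B}(d\otimes b)\otimes\id)(\widehat r)$, using the products $\fl_{D\otimes B}(d\otimes b)(d'\otimes b')=(d\succ d')\otimes(bb')+(d\prec d')\otimes(b'b)$ and its right-handed counterpart read off from Proposition~\ref{pro:perm-dend}. This gives a sum over $i,j$ of four terms whose $B$-legs have the shapes $(be_j)\otimes f_j$, $(e_j b)\otimes f_j$, $e_j\otimes(bf_j)$, $e_j\otimes(f_j b)$. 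In parallel I would expand $\Delta(d\otimes b)$ from \eqref{ind-coass}, substituting the coboundary expressions \eqref{d-cobo1} and \eqref{d-cobo2} for $\theta_{\prec,r}$ and $\theta_{\succ,r}$ with $r_{\prec}=r$, $r_{\succ}=-\tau(r)$; this produces a sum over $i$ and over the Sweedler legs of $\nu_{\omega}(b)$.

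To reconcile the two expansions I would invoke the quadratic perm-algebra identities already derived inside the proof of Theorem~\ref{thm:indu-sLiebia} from Lemma~\ref{lem:l-ass-dual} and the invariance and nondegeneracy of $\omega$: $\sum_{(b)}b_{(1)}\otimes b_{(2)}=\sum_j e_j\otimes(f_j b)$, $\sum_{(b)}b_{(2)}\otimes b_{(1)}=-\sum_j(e_j b)\otimes f_j$, and $\sum_j(be_j)\otimes f_j=\sum_j e_j\otimes(bf_j)=\sum_{(b)}\big(b_{(1)}\otimes b_{(2)}-b_{(2)}\otimes b_{(1)}\big)$. Substituting these into the $B$-legs rewrites the $\nu_{\omega}$-indexed sums as dual-basis sums of the same type as those appearing in $\Delta_{\widehat r}$, so that the difference $\Delta(d\otimes b)-\Delta_{\widehat r}(d\otimes b)$ collapses into a single expression of the form $X\bullet(b_{(2)}\otimes b_{(1)})$, where $X\in D\otimes D$ is assembled from $x_i,y_i$ and the operators $\succ,\prec$ by $d$.

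The main obstacle is the bookkeeping in this last collapse: one must track the six $\theta$-terms against the four $\widehat r$-terms and verify that, after the $B$-leg substitutions, the surviving $D$-legs organize into an expression $X$ that is built from the components of $r-\tau(r)$. This is precisely where the hypothesis that $r$ is symmetric enters: since $r=\tau(r)$ we have $X=0$, so $\Delta=\Delta_{\widehat r}$; symmetry here plays the role that $(\fl_A,\ad_A)$-invariance of $r-\tau(r)$ plays in Theorem~\ref{thm:indu-sLiebia}. For the final triangularity statement, if $(D,\prec,\succ,\theta_{\prec,r},\theta_{\succ,r})$ is triangular then $r$ is a symmetric solution of the $\DYBE$ by Proposition~\ref{pro:tridend}; Proposition~\ref{pro:DYBE-AYBE} then shows $\widehat r$ is a skew-symmetric solution of the $\AYBE$ in $(D\otimes B,\ast)$, and Proposition~\ref{pro:triASI} makes $(D\otimes B,\ast,\Delta_{\widehat r})$ a triangular ASI bialgebra. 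Because $\Delta=\Delta_{\widehat r}$, the bialgebra $(D\otimes B,\ast,\Delta)$ is triangular, finishing the proof.
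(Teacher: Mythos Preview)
Your proposal is correct and follows essentially the same route as the paper's proof: both expand $\Delta(d\otimes b)$ from \eqref{ind-coass} with the coboundary formulas \eqref{d-cobo1}--\eqref{d-cobo2}, expand $\Delta_{\widehat r}(d\otimes b)$ from \eqref{ass-cobo}, convert between the Sweedler legs of $\nu_{\omega}(b)$ and the dual-basis sums via the three identities established in the proof of Theorem~\ref{thm:indu-sLiebia}, and then deduce triangularity from Proposition~\ref{pro:DYBE-AYBE}. The only cosmetic difference is that the paper silently uses $r=\tau(r)$ to write $r_{\succ}=-r$ in its expansion of $\theta_{\succ,r}$ and thereby obtains a direct term-by-term match, whereas you organize the residual difference as a $D\otimes D$ factor applied to $r-\tau(r)$ and then invoke symmetry; the content is the same (and for the record the residual sits on the $b_{(1)}\otimes b_{(2)}$ leg rather than $b_{(2)}\otimes b_{(1)}$, but this does not affect the argument).
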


\begin{proof}
Let $r=\sum_{i}x_{i}\otimes y_{i}$. For any $b\in B$ and $d\in D$, we have
\begin{align*}
\Delta(d\otimes b)&=\theta_{\succ,r}(d)\bullet\nu_{\omega}(b)
+\theta_{\prec,r}(d)\bullet\tau(\nu_{\omega}(b)) \\
&=\sum_{i}\sum_{(b)}\Big((x_{i}\otimes b_{(1)})\otimes((d\prec y_{i})\otimes b_{(2)})
+(x_{i}\otimes b_{(1)})\otimes((d\succ y_{i})\otimes b_{(2)})\\[-4mm]
&\qquad\qquad\quad-((x_{i}\prec d)\otimes b_{(1)})\otimes(y_{i}\otimes b_{(2)})
-(x_{i}\otimes b_{(2)})\otimes((d\succ y_{i})\otimes b_{(1)})\\[-1mm]
&\qquad\qquad\quad+((x_{i}\prec d)\otimes b_{(2)})\otimes(y_{i}\otimes b_{(1)})
+((x_{i}\succ d)\otimes b_{(2)})\otimes(y_{i}\otimes b_{(1)})\Big),
\end{align*}
and
\begin{align*}
\Delta_{\widehat{r}}(d\otimes b)
&=(\id\otimes\fl_{A}(d\otimes b)-\fr_{A}(d\otimes b)\otimes\id)\Big(
\sum_{i,j}(x_{i}\otimes e_{j})\otimes(y_{i}\otimes f_{j})\Big)\\[-2mm]
&=\sum_{i,j}\Big((x_{i}\otimes e_{j})\otimes((d\succ y_{i})\otimes(bf_{j}))
+(x_{i}\otimes e_{j})\otimes((d\prec y_{i})\otimes(f_{j}b))\\[-5mm]
&\qquad\quad-((x_{i}\succ d)\otimes(e_{j}b))\otimes(y_{i}\otimes f_{j})
-((x_{i}\prec d)\otimes(be_{j}))\otimes(y_{i}\otimes f_{j})\Big),
\end{align*}
where $\widehat{r}=\sum_{i, j}(x_{i}\otimes e_{j})\otimes(y_{i}\otimes f_{j})$,
$\{e_{1}, e_{2},\cdots, e_{n}\}$ is a basis of $B$ and $\{f_{1}, f_{2},\cdots, f_{n}\}$
is the dual basis of $\{e_{1}, e_{2},\cdots, e_{n}\}$ with respect to $\omega(-,-)$.
Similar to the proof of Theorem \ref{thm:indu-sLiebia}, by using the nondegeneracy
of $\omega(-,-)$, we can obtain $\sum_{(b)}b_{(1)}\otimes b_{(2)}
=\sum_{j}e_{j}\otimes(f_{j}b)$, $\sum_{(b)}b_{(2)}\otimes b_{(1)}
=-\sum_{j}(e_{j}b)\otimes f_{j}$ and $\sum_{j}(be_{j})\otimes f_{j}
=\sum_{j}e_{j}\otimes (bf_{j})=\sum_{(b)}\big(b_{(1)}\otimes b_{(2)}
-b_{(2)}\otimes b_{(1)}\big)$. Thus, $\Delta(d\otimes b)=\Delta_{\widehat{r}}
(d\otimes b)$ and so that $(D\otimes B, \ast, \Delta)
=(D\otimes B, \ast, \Delta_{\widehat{r}})$ as ASI bialgebras.
Finally, following from Proposition \ref{pro:DYBE-AYBE}, we obtain that
$(D\otimes B, \ast, \Delta)$ is a triangular ASI bialgebra if $(D, \prec, \succ,
\theta_{\prec,r}, \theta_{\succ,r})$ is a triangular dendriform $\md$-bialgebra.
\end{proof}

Let $(D, \prec, \succ)$ be a dendriform algebra and $(V, \kl_{\prec}, \kr_{\prec},
\kl_{\succ}, \kr_{\succ})$ be a bomodule over it.
A linear map $P: V\rightarrow D$ is called an {\bf $\mathcal{O}$-operator of
$(D, \prec, \succ)$ associated to $(V, \kl_{\prec}, \kr_{\prec}, \kl_{\succ},
\kr_{\succ})$} if for any $d_{1}, d_{2}\in D$,
\begin{align*}
P(d_{1})\prec P(d_{2})&=P\big(\kl_{\prec}(P(d_{1}))(d_{2})
+\kr_{\prec}(P(d_{2}))(d_{1})\big),\\
P(d_{1})\succ P(d_{2})&=P\big(\kl_{\succ}(P(d_{1}))(d_{2})
+\kr_{\succ}(P(d_{2}))(d_{1})\big).
\end{align*}

\begin{pro}[\cite{Bai}]\label{pro:o-dend}
Let $(D, \prec, \succ)$ be a dendriform algebra, $r\in\g\otimes\g$ be symmetric.
Then $r$ is a solution of the $\DYBE$ in $(D, \prec, \succ)$ if and only if $r^{\sharp}$
is an $\mathcal{O}$-operator of $(D, \prec, \succ)$ associated to the coregular
bimodule $(D^{\ast}, \fr_{\succ}^{\ast}+\fr_{\prec}^{\ast}, -\fl_{\prec}^{\ast},
-\fr_{\succ}^{\ast}, \fl_{\prec}^{\ast}+\fl_{\succ}^{\ast})$.
\end{pro}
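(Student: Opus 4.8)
\emph{Proof proposal.} The plan is to dualise the tensor equation $\mathbf{D}_r=0$ into the two operator identities defining an $\mathcal{O}$-operator, following the template of Propositions \ref{pro:PLYBE-CYBE} and \ref{pro:o-PL}. Write $r=\sum_i x_i\otimes y_i$. Because $r$ is symmetric we may use both $r^\sharp(\xi)=\sum_i\langle\xi,x_i\rangle y_i$ and $r^\sharp(\xi)=\sum_i\langle\xi,y_i\rangle x_i$, together with the self-adjointness $\langle\xi,r^\sharp(\eta)\rangle=\langle\eta,r^\sharp(\xi)\rangle$; these three facts are exactly what let one move indices freely between the two tensor legs. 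First I would fix covectors $\xi_1,\xi_2,\xi_3\in D^\ast$ and pair $\xi_1\otimes\xi_2\otimes\xi_3$ with each of the four constituents $r_{12}\prec r_{13}$, $r_{12}\succ r_{13}$, $r_{13}\prec r_{23}$, $r_{23}\succ r_{12}$ of $\mathbf{D}_r$, rewriting every contraction as a scalar of the form $\langle\xi_k,\,r^\sharp(\xi_l)\prec r^\sharp(\xi_m)\rangle$ or $\langle\xi_k,\,r^\sharp(\xi_l)\succ r^\sharp(\xi_m)\rangle$.

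Next, using the defining relations of the dual actions (that is, $\langle\fl_\prec^\ast(d)\xi,d'\rangle=\langle\xi,d\prec d'\rangle$, $\langle\fr_\succ^\ast(d)\xi,d'\rangle=\langle\xi,d'\succ d\rangle$, and their analogues) together with the self-adjointness of $r^\sharp$, I would transport the ``external'' covector into one fixed slot, so that the contraction of $\mathbf{D}_r$ against $\xi_1\otimes\xi_2\otimes\xi_3$ becomes $\langle\xi_1,\,E(\xi_2,\xi_3)\rangle$ for an expression $E(\xi_2,\xi_3)\in D$ built from $r^\sharp(\xi_2),r^\sharp(\xi_3)$ and the actions $\fr_\succ^\ast+\fr_\prec^\ast,\,-\fl_\prec^\ast,\,-\fr_\succ^\ast,\,\fl_\prec^\ast+\fl_\succ^\ast$. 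Since $\xi_1$ is arbitrary and the pairing $D^\ast\times D\to\Bbbk$ is nondegenerate, $\mathbf{D}_r=0$ is equivalent to $E(\xi_2,\xi_3)=0$ for all $\xi_2,\xi_3$, which is precisely the assertion that $P=r^\sharp$ satisfies the $\mathcal{O}$-operator identity associated to the stated coregular bimodule $(D^\ast,\fr_\succ^\ast+\fr_\prec^\ast,-\fl_\prec^\ast,-\fr_\succ^\ast,\fl_\prec^\ast+\fl_\succ^\ast)$. The converse implication is obtained by reading the same chain of equalities in reverse.

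The step I expect to be the main obstacle is recovering \emph{both} $\mathcal{O}$-operator identities, the one for $\prec$ and the one for $\succ$, rather than merely their sum. The four terms of $\mathbf{D}_r$ place their products in different tensor slots (slot $1$ for $r_{12}\prec r_{13}$ and $r_{12}\succ r_{13}$, slot $3$ for $r_{13}\prec r_{23}$, slot $2$ for $r_{23}\succ r_{12}$), so the split into a $\prec$-identity and a $\succ$-identity is invisible term by term: a naive contraction in the product slot yields only the combined identity, which under $\ast=\prec+\succ$ is the associative $\mathcal{O}$-operator relation. The genuine work is to deploy the symmetry $r=\tau(r)$ to reindex the slot-$2$ and slot-$3$ terms into a common slot and thereby separate the two dendriform identities, paying close attention to the signs carried by $-\fl_\prec^\ast$ and $-\fr_\succ^\ast$. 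Two consistency checks would guard the bookkeeping: summing the two recovered identities must reproduce the associative $\mathcal{O}$-operator relation for $(D,\ast)$ compatibly with Proposition \ref{pro:comm-diag}, and combining them along the induced pre-Lie operation $\diamond$ (with $d_1\diamond d_2=d_1\succ d_2-d_2\prec d_1$) must recover the pre-Lie $\mathcal{O}$-operator of Proposition \ref{pro:o-PL}; I would verify these early, since they pin down the correct actions and expose any sign or indexing slip in the identification.
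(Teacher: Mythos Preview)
The paper does not prove this proposition; it is cited from \cite{Bai} without argument, like the companion results Propositions~\ref{pro:o-PL}, \ref{pro:o-ass} and \ref{pro:o-lie}. So there is no paper proof to compare against.

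Your overall strategy---pair $\mathbf{D}_r$ with $\xi_1\otimes\xi_2\otimes\xi_3$ and use self-adjointness of $r^\sharp$---is the right one, but your handling of the ``two identities from one equation'' issue is slightly off. You propose to collect everything into slot~$1$, obtain $\langle\xi_1,E(\xi_2,\xi_3)\rangle=0$, and then somehow split $E=0$ into a $\prec$-half and a $\succ$-half using the symmetry of $r$. Collecting into slot~$1$ in fact gives only the \emph{combined} identity $r^\sharp(\xi_2)\ast r^\sharp(\xi_3)=r^\sharp(\cdots)$ for $\ast=\prec+\succ$, as you suspected, and no amount of reindexing will split that single equation further. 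The fix is simpler than you anticipate: collect instead into slot~$3$, and separately into slot~$2$. The only uncontracted product sitting in slot~$3$ of $\mathbf{D}_r$ is $y_i\prec y_j$ from $r_{13}\prec r_{23}$, so isolating $\xi_3$ yields an identity of the form $r^\sharp(\xi_1)\prec r^\sharp(\xi_2)=r^\sharp(\cdots)$; likewise the only product in slot~$2$ is $x_i\succ y_j$ from $r_{23}\succ r_{12}$, so isolating $\xi_2$ yields $r^\sharp(\xi_3)\succ r^\sharp(\xi_1)=r^\sharp(\cdots)$. Each of these is \emph{individually} equivalent to $\mathbf{D}_r=0$ (by nondegeneracy in the isolated slot), hence the two are automatically equivalent to each other and together constitute the two halves of an $\mathcal{O}$-operator condition. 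The symmetry of $r$ enters only through the self-adjointness of $r^\sharp$, which is what lets you move the slot-$1$ terms into slots~$2$ and~$3$; no further separation trick is needed. Matching the four resulting actions to the four entries of the stated coregular quintuple is then pure bookkeeping, for which your proposed cross-checks against the associative and pre-Lie cases are exactly the right safeguard.
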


An {\bf $\mathcal{O}$-operator} of an associative algebra $(A, \ast)$ associated to
$(V, \kl_{A}, \kr_{A})$ is a linear map $P: V\rightarrow A$ such that
$$
P(a_{1})\ast P(a_{2})=P\big(\kl_{A}(P(a_{1}))(a_{2})+\kr_{A}(P(a_{2}))(a_{1})\big),
$$
for any $a_{1}, a_{2}\in A$,

\begin{pro}[\cite{Bai}]\label{pro:o-ass}
Let $(A, \ast)$ be an associative algebra, $r\in A\otimes A$ be skew-symmetric.
Then $r$ is a solution of the $\AYBE$ in $(A, \ast)$ if and only if $r^{\sharp}$
is an $\mathcal{O}$-operator of $(A, \ast)$ associated to the coregular
bimodule $(A^{\ast}, \fr_{A}^{\ast}, \fl_{A}^{\ast})$.
\end{pro}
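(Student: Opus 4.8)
The plan is to establish both implications at once by translating each side of the claimed equivalence into an identity of trilinear functionals and then appealing to the nondegeneracy of the canonical pairing between $A^{\ast}\otimes A^{\ast}\otimes A^{\ast}$ and $A\otimes A\otimes A$. I would write $r=\sum_{i}x_{i}\otimes y_{i}$, so that $r^{\sharp}(\xi)=\sum_{i}\langle\xi, x_{i}\rangle y_{i}$, and record that $\langle\fl_{A}^{\ast}(a)(\xi), b\rangle=\langle\xi, a\ast b\rangle$ and $\langle\fr_{A}^{\ast}(a)(\xi), b\rangle=\langle\xi, b\ast a\rangle$. The $\mathcal{O}$-operator condition for $r^{\sharp}$ associated to the coregular bimodule $(A^{\ast}, \fr_{A}^{\ast}, \fl_{A}^{\ast})$ reads $r^{\sharp}(\xi_{1})\ast r^{\sharp}(\xi_{2})=r^{\sharp}\big(\fr_{A}^{\ast}(r^{\sharp}(\xi_{1}))(\xi_{2})+\fl_{A}^{\ast}(r^{\sharp}(\xi_{2}))(\xi_{1})\big)$, an identity in $A$. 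I would pair this with an arbitrary $\xi_{3}\in A^{\ast}$ and pair $\mathbf{A}_{r}$ with an arbitrary $\xi_{1}\otimes\xi_{2}\otimes\xi_{3}$, so that the task reduces to matching the three resulting scalars term by term.

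For the matching, unwinding the defining pairings and relabeling summation indices should give $\langle\xi_{3}, r^{\sharp}(\xi_{1})\ast r^{\sharp}(\xi_{2})\rangle=\langle\xi_{1}\otimes\xi_{2}\otimes\xi_{3}, r_{13}\ast r_{23}\rangle$ and $\langle\xi_{3}, r^{\sharp}(\fr_{A}^{\ast}(r^{\sharp}(\xi_{1}))(\xi_{2}))\rangle=\langle\xi_{1}\otimes\xi_{2}\otimes\xi_{3}, r_{23}\ast r_{12}\rangle$, neither of which uses skew-symmetry. The remaining term expands to $\langle\xi_{3}, r^{\sharp}(\fl_{A}^{\ast}(r^{\sharp}(\xi_{2}))(\xi_{1}))\rangle=\sum_{i,j}\langle\xi_{2}, x_{j}\rangle\langle\xi_{1}, y_{j}\ast x_{i}\rangle\langle\xi_{3}, y_{i}\rangle$, and here I would invoke the skew-symmetry hypothesis in the form $\sum_{j}x_{j}\otimes y_{j}=-\sum_{j}y_{j}\otimes x_{j}$, applied to the index $j$, to rewrite this as $-\sum_{i,j}\langle\xi_{1}, x_{j}\ast x_{i}\rangle\langle\xi_{2}, y_{j}\rangle\langle\xi_{3}, y_{i}\rangle=-\langle\xi_{1}\otimes\xi_{2}\otimes\xi_{3}, r_{12}\ast r_{13}\rangle$.

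Combining the three identities, the image of the $\mathcal{O}$-operator condition under pairing with $\xi_{3}$ becomes exactly $\langle\xi_{1}\otimes\xi_{2}\otimes\xi_{3},\, r_{12}\ast r_{13}+r_{13}\ast r_{23}-r_{23}\ast r_{12}\rangle=\langle\xi_{1}\otimes\xi_{2}\otimes\xi_{3}, \mathbf{A}_{r}\rangle$. Since $\xi_{1}, \xi_{2}, \xi_{3}$ range over all of $A^{\ast}$ and the pairing is nondegenerate, the $\mathcal{O}$-operator identity holds for all $\xi_{1}, \xi_{2}$ if and only if $\mathbf{A}_{r}=0$, which is the asserted equivalence. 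I expect the only genuinely delicate point to be the third term: it is precisely where skew-symmetry of $r$ is indispensable, since both the correct sign and the index pattern reproducing $r_{12}\ast r_{13}$ emerge solely from the substitution $x_{j}\otimes y_{j}\mapsto-y_{j}\otimes x_{j}$, and the equivalence would break down for a general $r$. The other two matchings are bookkeeping, so the main care lies in tracking which factor carries the active index when applying the skew-symmetry relation.
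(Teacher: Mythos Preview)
Your proof is correct. The paper does not supply its own proof of this proposition---it is quoted from \cite{Bai}---so there is no in-paper argument to compare against; your direct computation via pairing with $\xi_{1}\otimes\xi_{2}\otimes\xi_{3}$ and invoking finite-dimensional nondegeneracy is the standard route, and your identification of the third term as the sole place where skew-symmetry is needed is accurate.
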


Similar to the proof of Theorem \ref{thm:indu-sLiebia}, one can check that
$\widehat{r}^{\sharp}: (D\otimes B)^{\ast}\rightarrow D\otimes B$ equals
$r^{\sharp}\otimes\kappa^{\sharp}$. Thus, we also have the following commutative diagram:
$$
\xymatrix@C=1.4cm@R=0.5cm{
\txt{$(D, \prec, \succ, \theta_{\prec,r}, \theta_{\succ,r})$ \\ {\tiny a triangular
dendriform $\md$-bialgebra}}
\ar[d]^{{\rm Cro.}~\ref{cor:indassbia}}_{{\rm Pro.}~\ref{pro:spre-ASIbia}} &
\txt{$r$ \\ {\tiny a symmetric solution} \\ {\tiny of the $\DYBE$ in $(D, \prec, \succ)$}}
\ar[d]_-{{\rm Pro.}~\ref{pro:DYBE-AYBE}}\ar[r]^-{{\rm Pro.}~\ref{pro:o-dend}}
\ar[l]_-{{\rm Pro.}~\ref{pro:tridend}}    &
\txt{$r^{\sharp}$\\ {\tiny an $\mathcal{O}$-operator of $(D, \prec, \succ)$} \\
{\tiny associated to $(D^{\ast}, \fr_{\succ}^{\ast}+\fr_{\prec}^{\ast},
-\fl_{\prec}^{\ast}, -\fr_{\succ}^{\ast}, \fl_{\prec}^{\ast}+\fl_{\succ}^{\ast})$}}
\ar[d]^-{\mbox{$-\otimes\kappa^{\sharp}$}} \\
\txt{$(D\otimes B, \ast, \Delta_{\widehat{r}})$ \\
{\tiny a triangular ASI bialgebra}}   &
\txt{$\widehat{r}$ \\ {\tiny a skew-symmetric solution} \\ {\tiny of the $\AYBE$ in
$(D\otimes B, \ast)$}} \ar[r]^-{{\rm Pro.}~\ref{pro:o-ass}}
\ar[l]_-{{\rm Pro.}~\ref{pro:triASI}}   &
\txt{$\widehat{r}^{\sharp}=r^{\sharp}\otimes\kappa^{\sharp}$ \\
{\tiny an $\mathcal{O}$-operator of $(D\otimes B, \ast)$ } \\
{\tiny associated to $((D\otimes B)^{\ast}, \fr^{\ast}_{D\otimes B},
\fl^{\ast}_{D\otimes B})$}}}
$$

\begin{ex}\label{ex:dendind-ASI}
We consider a 2-dimensional dendriform algebra $(D=\Bbbk\{e_{1}, e_{2}\}, \prec, \succ)$,
where the nonzero products are given by $e_{1}\succ e_{1}=e_{1}$ and $e_{2}\prec e_{1}=e_{2}$.
Then one can check that symmetric solutions of the $\DYBE$ in $(D, \prec, \succ)$
are $\alpha e_{1}\otimes e_{1}$ or $\beta(e_{1}\otimes e_{2}+e_{2}\otimes e_{1})
+\gamma e_{2}\otimes e_{2}$, $\alpha, \beta, \gamma\in\Bbbk$. Let $r=e_{1}\otimes e_{1}$.
Then we get a triangular dendriform $\md$-bialgebra $(D, \prec, \succ, \theta_{\prec},
\theta_{\succ})$ associated with $r$, where $\theta_{\succ}(e_{1})=e_{1}\otimes e_{1}$,
$\theta_{\succ}(e_{2})=e_{1}\otimes e_{2}$ and others are all zero.
Let $(B=\Bbbk\{x_{1}, x_{2}\}, \cdot, \omega)$ be the quadratic perm algebra given in
Example \ref{ex:prelie-indlie}. Then by Corollary \ref{cor:indassbia}, we obtain an
ASI bialgebra $(D\otimes B, \ast, \Delta)$, where the nonzero products and coproducts
are given by
\begin{align*}
&\;y_{2}y_{1}=y_{1},\qquad\qquad\quad y_{2}y_{2}=y_{2},\qquad\qquad\quad
y_{3}y_{2}=y_{3},\qquad\qquad\quad y_{4}y_{2}=y_{4},\\
&\Delta(y_{1})=-y_{1}\otimes y_{1},\qquad \Delta(y_{2})=-y_{1}\otimes y_{2},\qquad
\Delta(y_{3})=-y_{1}\otimes y_{3},\qquad \Delta(y_{4})=-y_{1}\otimes y_{4},
\end{align*}
$y_{1}:=e_{1}\otimes x_{1}$, $y_{2}:=e_{1}\otimes x_{2}$, $y_{3}:=e_{2}\otimes x_{1}$
and $y_{4}:=e_{2}\otimes x_{2}$. On the other hand, by Proposition \ref{pro:DYBE-AYBE},
we get a skew-symmetric solution $\widehat{r}=y_{1}\otimes y_{2}-y_{2}\otimes y_{1}$
of the $\CYBE$ in associative algebra $(D\otimes B, \ast)$. One can check that the
triangular ASI bialgebra associated with $\widehat{r}$ is just the ASI bialgebra
$(D\otimes B, \ast, \Delta)$ given above. Moreover, for the $\mathcal{O}$-operators
$r^{\sharp}$ and $\widehat{r}^{\sharp}$, similar to Example \ref{ex:prelie-indlie},
we also have $\widehat{r}^{\sharp}=r^{\sharp}\otimes\kappa^{\sharp}$.
\end{ex}

\section{Triangular Lie bialgebras vis dendriform $\md$-bialgebras} \label{sec:dend-lie}

In this section, we show that the commutative graph in Proposition \ref{pro:comm-diag}
is correct at the level of bialgebras. Moreover, by considering some special
solutions of the Yang-Baxter equations in dendriform algebras, pre-Lie algebras,
associative algebras and Lie algebras, show that the commutative graph is also
correct at the level of triangular bialgebras.

Given an associative algebra, we can get a Lie algebra by the commutator.
Considering the dual case, for a coassociative coalgebra $(A, \Delta)$,
we also have a Lie coalgebra structure on $A$ by setting a comultiplication
$\delta=\Delta-\tau\Delta: A\rightarrow A\otimes A$. Similarly, for any
dendriform coalgebra $(D, \theta_{\prec}, \theta_{\succ})$, there is a pre-Lie
coalgebra $(D, \vartheta)$, where $\vartheta=\theta_{\succ}-\tau\theta_{\prec}$.
Furthermore, for ASI bialgebras and dendriform $\md$-bialgebras, we have
the following proposition.

\begin{pro}[{\cite[Corollary 5.3.4]{Bai}}]\label{pro:ASI-Liebia}
With the above notations, we have
\begin{enumerate}
\item[$(i)$] if $(A, \ast, \Delta)$ is an ASI bialgebra, then $(A, [-,-], \delta)$
     is a Lie bialgebra, which is called the {\bf Lie bialgebra induced by $(A, \ast,
     \Delta)$}, where $(A, [-,-])$ is the induced Lie algebra from $(A, \ast)$;
\item[$(ii)$]  if $(D, \prec, \succ, \theta_{\prec}, \theta_{\succ})$ is a dendriform
     $\md$-bialgebra, then $(D, \diamond, \delta)$ is a pre-Lie bialgebra, which is called
     the {\bf pre-Lie bialgebra induced by $(D, \prec, \succ, \theta_{\prec},
     \theta_{\succ})$}, where $(D, \diamond)$ is the induced pre-Lie algebra from
     $(D, \prec, \succ)$.
\end{enumerate}
\end{pro}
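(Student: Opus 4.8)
The plan is to check, in each part, the three ingredients of the target bialgebra: that the underlying algebra is of the required type, that the stated co-operation is a coalgebra of the required type, and that the single bialgebra compatibility condition holds. In both parts the algebra statement is already in hand: $(A,[-,-])$ is the sub-adjacent Lie algebra of $(A,\ast)$, and $(D,\diamond)$ is the induced pre-Lie algebra of Proposition \ref{pro:dend-pre}. So the real work is the co-operation and the compatibility, and I would organize everything around the operator identities $\ad_A(a)=\fl_A(a)-\fr_A(a)$ (for part (i)) and $\fl_A=\fl_\succ-\fr_\prec$, $\fr_A=\fr_\succ-\fl_\prec$ (for part (ii), relating the pre-Lie multiplication operators to the dendriform ones).

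For part (i), write $\delta=(\id-\tau)\Delta$. The antisymmetry $\tau\delta=-\delta$ is immediate, and co-Jacobi for $\delta$ is the exact linear dual of the classical ``associativity $\Rightarrow$ Jacobi'' computation: transposing coassociativity $(\Delta\otimes\id)\Delta=(\id\otimes\Delta)\Delta$ through the six-term bracketing yields the Lie-coalgebra axiom $(\id\otimes\delta)\delta-(\tau\otimes\id)(\id\otimes\delta)\delta=(\delta\otimes\id)\delta$, so I would record it as the transpose of the usual argument rather than recompute it. The heart is the Lie cocycle identity. I would expand $\delta([a_1,a_2])=(\id-\tau)\Delta(a_1\ast a_2-a_2\ast a_1)$ using the first ASI relation $\Delta(a_1\ast a_2)=(\fr_A(a_2)\otimes\id)(\Delta(a_1))+(\id\otimes\fl_A(a_1))(\Delta(a_2))$, producing eight terms in $\Delta(a_i)$ and $\tau\Delta(a_i)$, and separately expand the right-hand side $(\ad_A(a_1)\otimes\id+\id\otimes\ad_A(a_1))\delta(a_2)-(a_1\leftrightarrow a_2)$ into its $\fl_A,\fr_A$ constituents. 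The two sides match term by term once the mixed terms (those pairing $\Delta(a_1)$ with a flipped copy of $\Delta(a_2)$) are rewritten using the second ASI relation $\big(\fl_A(a_1)\otimes\id-\id\otimes\fr_A(a_1)\big)(\Delta(a_2))=\tau\big(\big(\id\otimes\fr_A(a_2)-\fl_A(a_2)\otimes\id\big)(\Delta(a_1))\big)$; this is precisely the step that converts the unwanted $\tau\Delta$ occurrences into $\Delta$ occurrences with the adjoint action in the correct slots.

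For part (ii), set $\vartheta=\theta_\succ-\tau\theta_\prec$ and $d_1\diamond d_2=d_1\succ d_2-d_2\prec d_1$. That $(D,\vartheta)$ is a pre-Lie coalgebra is the precise linear dual of Proposition \ref{pro:dend-pre}, so the pre-Lie co-identity is obtained by transposing the three dendriform-coalgebra axioms into the single pre-Lie-coalgebra axiom. For the two compatibility conditions of Definition \ref{def:pre-li-bia}, I would substitute $\vartheta=\theta_\succ-\tau\theta_\prec$ into each and then recognize the outcome as an explicit signed combination of the six dendriform $\md$-bialgebra relations \eqref{D-bi1}--\eqref{D-bi6}: the antisymmetrized (first) pre-Lie condition assembles from \eqref{D-bi5} and \eqref{D-bi6} together with the $\prec$/$\succ$-parts of \eqref{D-bi3} and \eqref{D-bi4}, while the second condition assembles from \eqref{D-bi1}, \eqref{D-bi2}, and the commutator $d_1\diamond d_2-d_2\diamond d_1$. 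Throughout, the decompositions $\fl_A=\fl_\succ-\fr_\prec$ and $\fr_A=\fr_\succ-\fl_\prec$ are used to collapse the dendriform operators into the pre-Lie ones appearing in Definition \ref{def:pre-li-bia}.

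The main obstacle in both parts is purely combinatorial: keeping the $\tau$-twists and the left/right multiplication operators aligned so that the several defining relations of the source bialgebra fold down onto the fewer relations of the target. In (i) the one delicate move is the correct application of the second ASI relation to re-fold the flipped terms; in (ii) it is pinning down the exact signed linear combination of \eqref{D-bi1}--\eqref{D-bi6} that reproduces each condition of Definition \ref{def:pre-li-bia}, where sign bookkeeping between $\theta_\prec$ and $\tau\theta_\prec$ is easy to mishandle. A cleaner, computation-free alternative would route through matched pairs: an ASI (resp.\ dendriform $\md$-) bialgebra is a matched pair of associative (resp.\ dendriform) algebras, the commutator (resp.\ $\diamond$) functor carries such a matched pair to a matched pair of Lie (resp.\ pre-Lie) algebras, and the latter is exactly a Lie (resp.\ pre-Lie) bialgebra with cobracket $\delta$ (resp.\ $\vartheta$). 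I would present the direct verification above as the primary argument and use this conceptual route as a consistency check.
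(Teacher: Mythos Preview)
The paper does not prove this proposition; it simply cites it as \cite[Corollary 5.3.4]{Bai} and moves on. So there is no in-paper proof to compare against, only the original reference.

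Your direct-verification strategy is correct and would go through. The operator identities you record ($\ad_A=\fl_A-\fr_A$ for (i), and $\fl_A=\fl_\succ-\fr_\prec$, $\fr_A=\fr_\succ-\fl_\prec$ for the pre-Lie multiplications in (ii)) are exactly right, and the co-operations $\delta=(\id-\tau)\Delta$ and $\vartheta=\theta_\succ-\tau\theta_\prec$ match the paper's conventions stated just before the proposition. In part (i) your identification of the second ASI axiom as the device that converts the $\tau\Delta$ terms is the key step and is correctly placed. In part (ii) your pairing of \eqref{D-bi1}--\eqref{D-bi6} with the two conditions of Definition~\ref{def:pre-li-bia} is plausible but left imprecise; be aware that the actual linear combinations are not as cleanly partitioned as you suggest (for instance, the second pre-Lie compatibility, evaluated on the commutator $d_1\diamond d_2-d_2\diamond d_1$, draws on pieces of \eqref{D-bi3} and \eqref{D-bi4} as well as \eqref{D-bi1}, \eqref{D-bi2}), so the bookkeeping you flag as the main obstacle is real and slightly messier than your sketch indicates.

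It is worth noting that the matched-pair route you mention at the end is not merely a consistency check but is essentially how the result arises in Bai's original paper: the equivalences there between bialgebra structures and matched pairs (and double constructions) make the passage from ASI/dendriform to Lie/pre-Lie bialgebras a one-line functoriality statement. If you want a proof that avoids the term-by-term verification entirely, that is the cleaner way to present it.
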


\begin{rmk}\label{rmk:comp-AL}
In fact, by equation $\delta=\Delta-\tau\Delta$, a completed ASI bialgebra also
induces a completed Lie bialgebra.
\end{rmk}

Thus, for dendriform $\md$-bialgebras, pre-Lie bialgebras, ASI bialgebras and
Lie bialgebras, we have the following theorem.

\begin{thm}\label{thm:bialgebras}
Let $(D, \prec, \succ, \theta_{\prec}, \theta_{\succ})$ be a dendriform $\md$-bialgebra
and $(B, \cdot, \omega)$ be a quadratic perm algebra. Then we have the following
commutative diagram:
$$
\xymatrix@C=3cm@R=0.5cm{
\txt{$(D, \prec, \succ, \theta_{\prec}, \theta_{\succ})$ \\ {\tiny a dendriform bialgebra}}
\ar[d]_-{\mbox{\tiny $d_{1}\diamond d_{2}=d_{1}\succ d_{2}-d_{2}\prec d_{1}$}}
\ar[r]^-{\mbox{\tiny $-\otimes(B, \circ, \omega)$}}
&\txt{$(D\otimes B, \ast, \Delta)$\\ {\tiny an associative bialgebra}}
\ar[d]^-{\mbox{\tiny $[x_{1}, x_{2}]=x_{1}x_{2}-x_{2}x_{1}$}} \\
\txt{$(D, \diamond, \vartheta)$ \\ {\tiny a pre-Lie bialgebra}}
\ar[r]^-{\mbox{\tiny $-\otimes(B, \circ, \omega)$}}
& \txt{$(D\otimes B, [-,-], \delta)$ \\
{\tiny a Lie bialgebra}}}
$$
\end{thm}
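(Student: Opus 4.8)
The plan is to note that the algebra layer of the diagram is already handled by Proposition \ref{pro:comm-diag}, so that the whole content of the theorem reduces to checking that the two induced \emph{co}multiplications on $D\otimes B$ coincide; once that is done, the fact that each of the four corners carries a genuine bialgebra structure is supplied verbatim by Corollary \ref{cor:indassbia}, Theorem \ref{thm:pre-lie} and Proposition \ref{pro:ASI-Liebia}. First I would fix the Sweedler notation $\theta_{\prec}(d)=\sum_{(d)}d_{(1)}\otimes d_{(2)}$, $\theta_{\succ}(d)=\sum_{[d]}d_{[1]}\otimes d_{[2]}$ and $\nu_{\omega}(b)=\sum_{(b)}b_{(1)}\otimes b_{(2)}$.

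Going right then down, Corollary \ref{cor:indassbia} gives the ASI bialgebra comultiplication
\[
\Delta(d\otimes b)=\sum_{[d],(b)}(d_{[1]}\otimes b_{(1)})\otimes(d_{[2]}\otimes b_{(2)})
+\sum_{(d),(b)}(d_{(1)}\otimes b_{(2)})\otimes(d_{(2)}\otimes b_{(1)}),
\]
and Proposition \ref{pro:ASI-Liebia}$(i)$ then yields the Lie cobracket $\delta=\Delta-\tau\Delta$. Going down then right, the left arrow is Proposition \ref{pro:ASI-Liebia}$(ii)$, which equips $(D,\diamond)$ with $\vartheta=\theta_{\succ}-\tau\theta_{\prec}$, and Theorem \ref{thm:pre-lie} then produces, via Eq. \eqref{pl-pm}, the cobracket $\delta'(d\otimes b)=(\id\otimes\id-\tau)\big(\vartheta(d)\bullet\nu_{\omega}(b)\big)$.

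The heart of the argument is the termwise comparison of $\delta$ and $\delta'$. Substituting $\vartheta=\theta_{\succ}-\tau\theta_{\prec}$ and expanding the $\bullet$-product gives
\[
\vartheta(d)\bullet\nu_{\omega}(b)=\sum_{[d],(b)}(d_{[1]}\otimes b_{(1)})\otimes(d_{[2]}\otimes b_{(2)})
-\sum_{(d),(b)}(d_{(2)}\otimes b_{(1)})\otimes(d_{(1)}\otimes b_{(2)}),
\]
so applying $\id\otimes\id-\tau$ produces four summands, while writing out $\Delta-\tau\Delta$ from the displayed $\Delta$ produces four summands as well. I would then match them one-to-one: the $\theta_{\succ}$-summand of $\Delta$ equals the first summand of $\vartheta(d)\bullet\nu_{\omega}(b)$, their $\tau$-twists agree, and the two $\theta_{\prec}$-summands of $\Delta-\tau\Delta$ coincide exactly (with no relabeling) with the $-\tau\theta_{\prec}$-contributions of $\delta'$. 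Hence $\delta=\delta'$, and combined with Proposition \ref{pro:comm-diag} the two composites agree as Lie bialgebras.

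I expect the only delicate point to be pure bookkeeping: keeping the order of the two tensor slots of the $\bullet$-product straight under the flip $\tau$, and not conflating the $(d)$-legs of $\theta_{\prec}$ with the $[d]$-legs of $\theta_{\succ}$. There is no structural obstacle, because well-definedness and the bialgebra axioms at every corner are already guaranteed by the cited results; the theorem is precisely the assertion that these two constructions of the cobracket on $D\otimes B$ are the same, which the matching above establishes.
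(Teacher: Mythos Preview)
Your proposal is correct and follows essentially the same approach as the paper's own proof: reduce the algebra layer to Proposition~\ref{pro:comm-diag}, invoke Corollary~\ref{cor:indassbia}, Theorem~\ref{thm:pre-lie} and Proposition~\ref{pro:ASI-Liebia} to populate the four corners, and then verify $\delta=\delta'$ by expanding both $(\id-\tau)\big((\theta_{\succ}-\tau\theta_{\prec})(d)\bullet\nu_{\omega}(b)\big)$ and $\Delta-\tau\Delta$ into four Sweedler summands and matching them term by term. The paper carries out exactly this four-term comparison with no additional ingredients.
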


\begin{proof}
On the one hand, by Proposition \ref{pro:ASI-Liebia} and Theorem \ref{thm:pre-lie},
we can get a Lie bialgebra $(D\otimes B, [-,-]_{1}, \delta_{1})$ form a dendriform
$\md$-bialgebra $(D, \prec, \succ, \theta_{\prec}, \theta_{\succ})$ by:
$$
\xymatrix@C=2cm@R=0.6cm{
\txt{$(D, \prec, \succ, \theta_{\prec}, \theta_{\succ})$ \\ {\tiny a dendriform bialgebra}}
\ar[r]^-{\mbox{\tiny Pro. \ref{pro:ASI-Liebia}}}
&\txt{$(D, \diamond, \vartheta)$ \\ {\tiny a pre-Lie bialgebra}}
\ar[r]^-{\mbox{\tiny Thm. \ref{thm:pre-lie}}}
& \txt{$(D\otimes B, [-,-]_{1}, \delta_{1})$. \\
{\tiny a Lie bialgebra}}}
$$
On the other hand, by Corollary \ref{cor:indassbia} and Proposition \ref{pro:ASI-Liebia},
we can get a Lie bialgebra $(D\otimes B, [-,-]_{2}, \delta_{2})$ form a dendriform
$\md$-bialgebra $(D, \prec, \succ, \theta_{\prec}, \theta_{\succ})$ by:
$$
\xymatrix@C=2cm@R=0.6cm{
\txt{$(D, \prec, \succ, \theta_{\prec}, \theta_{\succ})$ \\ {\tiny a dendriform bialgebra}}
\ar[r]^-{\mbox{\tiny Cor. \ref{cor:indassbia}}}
&\txt{$(D\otimes B, \ast, \Delta)$ \\ {\tiny an ASI bialgebra}}
\ar[r]^-{\mbox{\tiny Pro. \ref{pro:ASI-Liebia}}}
& \txt{$(D\otimes B, [-,-]_{2}, \delta_{2})$. \\
{\tiny a Lie bialgebra}}}
$$
In Proposition \ref{pro:comm-diag}, we have obtained $[-,-]_{1}=[-,-]_{2}$.
Now we need to show $\delta_{1}=\delta_{2}$. For any $b\in B$ and $d\in D$,
\begin{align*}
\delta_{1}(d\otimes b)
&=(\id\otimes\id-\tau)\big((\theta_{\succ}-\tau\theta_{\prec})(d)\bullet\nu_{\omega}(b)\big)\\
&=\sum_{[d]}\sum_{(b)}(d_{[1]}\otimes b_{(1)})\otimes(d_{[2]}\otimes b_{(2)})
-\sum_{(d)}\sum_{(b)}(d_{(2)}\otimes b_{(1)})\otimes(d_{(1)}\otimes b_{(2)})\\[-3mm]
&\qquad\qquad-\sum_{[d]}\sum_{(b)}(d_{[2]}\otimes b_{(2)})\otimes(d_{[1]}\otimes b_{(1)})
+\sum_{(d)}\sum_{(b)}(d_{(1)}\otimes b_{(2)})\otimes(d_{(2)}\otimes b_{(1)})\\[-2mm]
&=\Delta(d\otimes b)-\tau(\Delta(d\otimes b))\\
&=\delta_{2}(d\otimes b),
\end{align*}
where $\theta_{\prec}(d)=\sum_{(d)}d_{(1)}\otimes d_{(2)}$, $\theta_{\succ}(d)
=\sum_{[d]}d_{[1]}\otimes d_{[2]}$ and $\nu_{\omega}(b)=\sum_{(b)}b_{(1)}\otimes b_{(2)}$
in the Sweedler notation. Thus, $(D\otimes B, [-,-]_{1}, \delta_{1})=
(D\otimes B, [-,-]_{2}, \delta_{2})$ as Lie bialgebras, and so that the diagram
is commutative.
\end{proof}

\begin{ex}\label{ex:com-diag}
Let $(D, \prec, \succ, \theta_{\prec}, \theta_{\succ})$ be the triangular dendriform
$\md$-bialgebra given in Example \ref{ex:dendind-ASI} and $(B, \cdot, \omega)$ be
the quadratic perm algebra given in Example \ref{ex:prelie-indlie}. By Corollary
\ref{cor:indassbia}, we get an ASI bialgebra $(D\otimes B, \ast, \Delta)$ form
$(D, \prec, \succ, \theta_{\prec}, \theta_{\succ})$ by $(B, \cdot, \omega)$, which has
given in Example \ref{ex:dendind-ASI}. By Proposition \ref{pro:ASI-Liebia}, we get
a Lie bialgebra $(D\otimes B, [-,-], \delta)$, where the bracket $[-,-]$ and coproduct
$\delta$ are just the bracket and the coproduct given in Example \ref{ex:prelie-indlie}.

On the other hand, note that the pre-Lie bialgebra $(D, \diamond, \vartheta)$ induced by
$(D, \prec, \succ, \theta_{\prec}, \theta_{\succ})$ is just the pre-Lie bialgebra
$(A, \diamond, \vartheta)$ in Example \ref{ex:prelie-indlie}, and the Lie bialgebra
$(D\otimes B, [-,-], \delta)$ form $(D, \diamond, \vartheta)$ by $(B, \cdot, \omega)$
just the Lie bialgebra $(A\otimes B, [-,-], \delta)$ in Example \ref{ex:prelie-indlie}.
\end{ex}

Following, we consider some special bialgebras, such as quasi-triangular bialgebras,
triangular bialgebras and factorizable bialgebras. Let $(A, \ast)$ be an associative
algebra. Recall that an element $r=\sum_{i}x_{i}\otimes y_{i}\in A\otimes A$ is said
to be {\bf $(\fl_{A}, \fr_{A})$-invariant} if for any $a\in A$,
$$
(\id\otimes\fl_{A}(a)-\fr_{A}(a)\otimes\id)(r)=0.
$$

\begin{pro}[\cite{SW}]\label{pro:qtass-bia}
Let $(A, \ast)$ be an associative algebra, $r\in A\otimes A$ and
$\Delta_{r}: A\rightarrow A\otimes A$ is defined by Eq. \eqref{ass-cobo}.
If $r$ is a solution of the $\AYBE$ in $(A, \ast)$ and $r+\tau(r)$ is
$(\fl_{A}, \fr_{A})$-invariant, then $(A, \ast, \Delta_{r})$ is an ASI bialgebra,
which is called a {\bf quasi-triangular ASI bialgebra} associated with $r$.
\end{pro}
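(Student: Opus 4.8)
The plan is to verify directly the three defining axioms of an ASI bialgebra for the triple $(A, \ast, \Delta_r)$: coassociativity of $\Delta_r$, the multiplicativity relation $\Delta_r(a_1\ast a_2)=(\fr_A(a_2)\otimes\id)(\Delta_r(a_1))+(\id\otimes\fl_A(a_1))(\Delta_r(a_2))$, and the skew compatibility $(\fl_A(a_1)\otimes\id-\id\otimes\fr_A(a_1))(\Delta_r(a_2))=\tau\big((\id\otimes\fr_A(a_2)-\fl_A(a_2)\otimes\id)(\Delta_r(a_1))\big)$. Writing $r=\sum_i x_i\otimes y_i$, Eq.~\eqref{ass-cobo} gives $\Delta_r(a)=\sum_i x_i\otimes(a\ast y_i)-\sum_i(x_i\ast a)\otimes y_i$. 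I would first dispatch multiplicativity, which is the easiest step: expanding the two right-hand summands and cancelling the two crossed terms $(x_i\ast a_2)\otimes(a_1\ast y_i)$, the surviving terms match $\Delta_r(a_1\ast a_2)$ after using associativity of $\ast$. This step requires neither the $\AYBE$ nor the invariance hypothesis.

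The key structural observation I would then exploit is that ``$r+\tau(r)$ is $(\fl_A,\fr_A)$-invariant'' says precisely that $\Delta_{r+\tau(r)}(a)=(\id\otimes\fl_A(a)-\fr_A(a)\otimes\id)(r+\tau(r))=0$ for all $a$. Since $\rho\mapsto\Delta_\rho$ is linear in $\rho$, this forces $\Delta_r=\Delta_{r_0}$ with $r_0:=\tfrac12(r-\tau(r))$ the skew-symmetric part of $r$; in other words the cobracket under study is literally that of a skew-symmetric element, reducing the whole claim to the triangular situation of Proposition~\ref{pro:triASI}. For the skew compatibility I would record the purely formal identity $\tau\big((\id\otimes\fr_A(a_2)-\fl_A(a_2)\otimes\id)(\Delta_\rho(a_1))\big)=-(\fl_A(a_1)\otimes\id-\id\otimes\fr_A(a_1))(\Delta_{\tau(\rho)}(a_2))$, valid for every $\rho\in A\otimes A$ and obtained by relabelling summation indices. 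The compatibility condition is then equivalent to $(\fl_A(a_1)\otimes\id-\id\otimes\fr_A(a_1))(\Delta_{r+\tau(r)}(a_2))=0$, which holds because $\Delta_{r+\tau(r)}=0$.

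The remaining and most delicate point is coassociativity, $(\id\otimes\Delta_r)\Delta_r=(\Delta_r\otimes\id)\Delta_r$. I would expand each side into four double-sum terms and form their difference; the pair of terms carrying $a$ in the middle tensor slot cancel after relabelling, and the remaining difference reorganizes into the image of the $\AYBE$ tensor $\mathbf{A}_r=r_{12}\ast r_{13}+r_{13}\ast r_{23}-r_{23}\ast r_{12}$ under the outer-leg actions $\fr_A(a)\otimes\id\otimes\id$ and $\id\otimes\id\otimes\fl_A(a)$, together with residual terms in which $a$ is multiplied into a single leg of $r$. The hypothesis $\mathbf{A}_r=0$ annihilates the first group, while the residual terms are controlled by the invariance of the symmetric part; equivalently, after the reduction $\Delta_r=\Delta_{r_0}$ one verifies that $\mathbf{A}_{r_0}=0$ follows from $\mathbf{A}_r=0$ once the symmetric contributions and cross terms built from $q=\tfrac12(r+\tau(r))$ are shown to vanish by invariance. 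I expect this bookkeeping---correctly tracking the placement of $a$ and the signs while matching the residual terms against the invariance identity---to be the main obstacle; the rest is routine.
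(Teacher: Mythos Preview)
The paper does not prove this proposition; it is quoted from \cite{SW} and given no argument. So there is nothing to compare against, and the question is simply whether your outline stands on its own.

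Your treatments of the multiplicativity axiom and of the skew compatibility are correct. The formal identity
\[
\tau\big((\id\otimes\fr_A(a_2)-\fl_A(a_2)\otimes\id)(\Delta_\rho(a_1))\big)
=-(\fl_A(a_1)\otimes\id-\id\otimes\fr_A(a_1))(\Delta_{\tau(\rho)}(a_2))
\]
does hold (it only needs associativity of $\ast$), and combined with $\Delta_{r+\tau(r)}=0$ it yields the skew compatibility immediately. The observation $\Delta_r=\Delta_{r_0}$ with $r_0=\tfrac12(r-\tau(r))$ is also correct.

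Where you go slightly astray is the coassociativity step. A clean computation shows that for \emph{any} $\rho\in A\otimes A$,
\[
(\Delta_\rho\otimes\id)\Delta_\rho(a)-(\id\otimes\Delta_\rho)\Delta_\rho(a)
=(\fr_A(a)\otimes\id\otimes\id-\id\otimes\id\otimes\fl_A(a))(\mathbf{A}_\rho),
\]
the two middle-slot terms cancelling exactly as you anticipated. There are no ``residual terms'' left over: coassociativity follows from $\mathbf{A}_r=0$ alone, and the invariance of $r+\tau(r)$ plays no role here. Your proposed detour---replacing $r$ by $r_0$ and then arguing that $\mathbf{A}_{r_0}=0$---is both unnecessary and not obviously valid: quasi-triangular does not imply triangular, so $\mathbf{A}_{r_0}$ need not vanish, only its image under the above outer-leg operator does. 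Drop that reduction, and your argument is complete and correct.
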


\begin{defi}[\cite{SW}]\label{def:fact-assbia}
Let $(A, \ast)$ be an associative algebra, $r\in A\otimes A$ and $(A, \ast, \Delta_{r})$
be a quasi-triangular ASI bialgebra associated with $r$. If $\mathcal{I}=
r^{\sharp}+\tau(r)^{\sharp}: A^{\ast}\rightarrow A$ is an isomorphism of vector spaces,
then $(A, \ast, \Delta_{r})$ is called a {\bf factorizable ASI bialgebra}.
\end{defi}

These ASI bialgebras are closely related to the solutions of $\AYBE$. Following,
we will examine the relationship between the solutions of $\AYBE$ and the solutions
of $\CYBE$ in the induced Lie algebra to provide the connection between these special
ASI bialgebras and the induced Lie bialgebras.

\begin{pro}\label{pro:ass-Lie-YBE}
Let $(A, \ast)$ be an associative algebra and $(A, [-,-])$ be the induced Lie algebra
from $(A, \ast)$. Suppose $r=\sum_{i}x_{i}\otimes y_{i}\in A\otimes A$ is a solution
of the $\AYBE$ in $(A, \ast)$. If $r+\tau(r)$ is $(\fl_{A}, \fr_{A})$-invariant, then
$r$ is a solution of the $\CYBE$ in $(A, [-,-])$ and $r+\tau(r)$ is $\ad_{A}$-invariant.

In particular, each skew-symmetric solution of the $\AYBE$ in $(A, \ast)$ is also a
skew-symmetric solution of the $\CYBE$ in $(A, [-,-])$.
\end{pro}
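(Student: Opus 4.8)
The plan is to verify the two assertions—that $r$ solves the $\CYBE$ in $(A,[-,-])$ and that $r+\tau(r)$ is $\ad_A$-invariant—by a direct computation that reduces everything to the hypothesis $\mathbf{A}_r=0$ together with the $(\fl_A,\fr_A)$-invariance of $r+\tau(r)$, in the spirit of the proof of Proposition~\ref{pro:PLYBE-CYBE}. Writing $r=\sum_i x_i\otimes y_i$ and expanding each commutator in $\mathbf{C}_r=[r_{12},r_{13}]+[r_{13},r_{23}]+[r_{12},r_{23}]$ by $[a,b]=a\ast b-b\ast a$ produces six terms. First I would collect the three terms carrying the products $x_i\ast x_j$, $y_i\ast y_j$ and $x_j\ast y_i$ in legs $1$, $3$ and $2$; after relabelling the summation indices these are exactly the three summands of
\[
\mathbf{A}_r=\sum_{i,j}(x_i\ast x_j)\otimes y_i\otimes y_j+\sum_{i,j}x_i\otimes x_j\otimes(y_i\ast y_j)-\sum_{i,j}x_j\otimes(x_i\ast y_j)\otimes y_i,
\]
so their sum vanishes by hypothesis.

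The remaining three terms carry the products $x_j\ast x_i$, $y_j\ast y_i$ and $y_i\ast x_j$ in legs $1$, $3$ and $2$; the key observation is that, after reversing the order of the three tensor legs and relabelling, they coincide with the negative of $\mathbf{A}_{\tau(r)}$, where $\tau(r)=\sum_i y_i\otimes x_i$. In other words $\mathbf{C}_r$ equals $\mathbf{A}_r$ minus the leg-reversal of $\mathbf{A}_{\tau(r)}$, so since $\mathbf{A}_r=0$ and leg-reversal is invertible it remains only to prove $\mathbf{A}_{\tau(r)}=0$. This is where the invariance hypothesis must enter, and it is the main obstacle. In the skew-symmetric case $\tau(r)=-r$ forces $\mathbf{A}_{\tau(r)}=\mathbf{A}_r=0$ immediately; in general I would set $s:=r+\tau(r)$, write $\tau(r)=s-r$, and expand $\mathbf{A}_{\tau(r)}$ using the bilinearity of the quadratic form $(\rho,\sigma)\mapsto \rho_{12}\ast\sigma_{13}+\rho_{13}\ast\sigma_{23}-\rho_{23}\ast\sigma_{12}$, discarding the $\mathbf{A}_r$ contribution. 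Each surviving term then involves the symmetric tensor $s$ in one of its two slots, and on such a term the $(\fl_A,\fr_A)$-invariance of $s$—in the form $(\id\otimes\fl_A(a))(s)=(\fr_A(a)\otimes\id)(s)$ for all $a\in A$—lets me migrate the associative product from one leg to an adjacent one; carrying this out systematically is the delicate bookkeeping that collapses the surviving terms to zero.

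Finally, the $\ad_A$-invariance of $s=r+\tau(r)$ follows cleanly and independently of the Yang-Baxter computation. Using $\ad_A(a)=\fl_A(a)-\fr_A(a)$, I would split
\[
\big(\id\otimes\ad_A(a)+\ad_A(a)\otimes\id\big)(s)=\big(\id\otimes\fl_A(a)-\fr_A(a)\otimes\id\big)(s)+\big(\fl_A(a)\otimes\id-\id\otimes\fr_A(a)\big)(s).
\]
The first summand vanishes by the $(\fl_A,\fr_A)$-invariance of $s$; applying $\tau$ to the second summand and using $\tau(s)=s$ (automatic for $s=r+\tau(r)$) turns it into the first summand, so it vanishes as well, giving the $\ad_A$-invariance. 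The concluding \emph{in particular} is then immediate: a skew-symmetric $r$ has $r+\tau(r)=0$, which is trivially $(\fl_A,\fr_A)$-invariant, so the main statement applies and shows that $r$ solves the $\CYBE$ in $(A,[-,-])$, while skew-symmetry of $r$ is unaffected.
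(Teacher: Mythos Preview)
Your outline is correct, and your treatment of the $\ad_A$-invariance of $s=r+\tau(r)$ as well as the skew-symmetric special case matches the paper essentially verbatim. The route you take for the $\CYBE$ part, however, is longer than the paper's and leaves a step unfinished. You correctly split $\mathbf{C}_r$ into $\mathbf{A}_r$ plus three further terms and correctly identify those three terms with the negative of the leg-reversal of $\mathbf{A}_{\tau(r)}$; you then reduce to proving $\mathbf{A}_{\tau(r)}=0$ via the bilinear expansion $\tau(r)=s-r$. That statement is true and can indeed be obtained this way, but it costs several applications of the invariance of $s$ (one has to handle $\mathbf{A}_s$ and the cross terms separately), and you stop at calling it ``delicate bookkeeping''.

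The paper avoids this detour. After the same initial decomposition, it applies the invariance of $s$ \emph{once}, in the form $(\fl_A(a)\otimes\id-\id\otimes\fr_A(a))(s)=0$, to legs $2$ and $3$ of the three remaining terms (with the leg-$1$ factor as spectator and its companion playing the role of $a$). This single move converts those terms directly into $(\id\otimes\tau)(\mathbf{A}_r)$, giving the clean identity $\mathbf{C}_r=\mathbf{A}_r-(\id\otimes\tau)(\mathbf{A}_r)$, which vanishes immediately. So your reduction to $\mathbf{A}_{\tau(r)}=0$ is valid and in fact isolates an independently useful fact about quasi-triangular ASI structures, but for the present statement the paper's one-step identity is shorter and leaves nothing to be filled in.
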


\begin{proof}
First, note that $\mathbf{C}_{r}:=[r_{12}, r_{13}]+[r_{13}, r_{23}]+[r_{12}, r_{23}]
=r_{12}\ast r_{13}-r_{13}\ast r_{12}+r_{13}\ast r_{23}-r_{23}\ast r_{13}
+r_{12}\ast r_{23}-r_{23}\ast r_{12}$. If $r+\tau(r)$ is $(\fl, \fr)$-invariant,
That is, $\sum_{i}x_{i}\otimes(a\ast y_{i})+y_{i}\otimes(a\ast x_{i})
=\sum_{i}(x_{i}\ast a)\otimes y_{i}+(y_{i}\ast a)\otimes x_{i}$ for any $a\in A$. Thus,
\begin{align*}
\mathbf{C}_{r}
=&\;\mathbf{A}_{r}-\sum_{i}\Big((x_{i}\ast x_{j})\otimes y_{j}\otimes y_{i}
+x_{i}\otimes x_{j}\otimes(y_{j}\ast y_{i})
-x_{j}\otimes(y_{j}\ast x_{i})\otimes y_{i}\Big)\\[-2mm]
=&\;\mathbf{A}_{r}-\sum_{i}\Big((x_{i}\ast x_{j})\otimes y_{j}\otimes y_{i}
+x_{i}\otimes(y_{i}\ast y_{j})\otimes x_{j}-x_{i}\otimes y_{j}\otimes(x_{j}\ast y_{i})\Big)\\
=&\;\mathbf{A}_{r}-(\id\otimes\tau)(\mathbf{A}_{r}).
\end{align*}
That is, $r$ is a solution of the $\CYBE$ in $(A, [-,-])$ if $r$ is a
solution of the $\AYBE$ in $(A, \ast)$ and $r+\tau(r)$ is $(\fl_{A}, \fr_{A})$-invariant.
Moreover, note that for any $a\in A$,
\begin{align*}
&\;(\id\otimes\ad_{A}(a)+\ad_{A}(a)\otimes\id)(r+\tau(r))\\
=&\;\sum_{i}\Big((a\ast x_{i})\otimes y_{i}-(x_{i}\ast a)\otimes y_{i}
+x_{i}\otimes(a\ast y_{i})-x_{i}\otimes(y_{i}\ast a)\\[-5mm]
&\qquad\quad+(a\ast y_{i})\otimes x_{i}-(y_{i}\ast a)\otimes x_{i}
+y_{i}\otimes(a\ast x_{i})-y_{i}\otimes(x_{i}\ast a)\Big)\\
=&\;0,
\end{align*}
since $r+\tau(r)$ is $(\fl_{A}, \fr_{A})$-invariant.
That is, $r+\tau(r)$ is $\ad_{A}$-invariant.

In particular, if $r$ is a skew-symmetric solution of the $\AYBE$ in $(A, \ast)$,
then $r+\tau(r)=0$ is $(\fl_{A}, \fr_{A})$-invariant, and so that $r$ is a
skew-symmetric solution of the $\CYBE$ in $(A, [-,-])$.
\end{proof}

\begin{pro}\label{pro:qtAss-qtLie}
Let $(A, \ast)$ be an associative algebra and $r\in A\otimes A$.
Suppose $(A, \ast, \Delta_{r})$ is an ASI bialgebra and $(A, [-,-], \delta)$ is the
induced Lie bialgebra from $(A, \ast, \Delta_{r})$, where $\Delta_{r}$ is given by
Eq. \eqref{ass-cobo}. If $r+\tau(r)$ is $(\fl_{A}, \fr_{A})$-invariant, then
$(A, [-,-], \delta)=(A, [-,-], \delta_{r})$ as Lie bialgebras, where
$\delta_{r}$ is defined Eq. \eqref{lie-cobo}. Thus, we have
\begin{enumerate}\itemsep=0pt
\item[$(i)$]   $(A, [-,-], \delta)$ is quasi-triangular if $(A, \ast, \Delta)$ is
               quasi-triangular;
\item[$(ii)$]  $(A, [-,-], \delta)$ is triangular if $(A, \ast, \Delta)$ is triangular;
\item[$(iii)$] $(A, [-,-], \delta)$ is factorizable if $(A, \ast, \Delta)$ is factorizable.
\end{enumerate}
\end{pro}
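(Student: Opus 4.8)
The plan is to reduce the whole statement to a single identity of comultiplications, $\delta=\delta_{r}$, after which the three assertions follow formally from results already established. Write $r=\sum_{i}x_{i}\otimes y_{i}$. First I would expand the induced Lie cobracket $\delta=\Delta_{r}-\tau\Delta_{r}$ using the coboundary formula \eqref{ass-cobo}: since
$$
\Delta_{r}(a)=\sum_{i}\big(x_{i}\otimes(a\ast y_{i})-(x_{i}\ast a)\otimes y_{i}\big),
$$
applying $\id-\tau$ produces four signed terms. Independently I would expand the Lie coboundary $\delta_{r}$ from \eqref{lie-cobo}, using $\ad_{A}(a)(z)=a\ast z-z\ast a$; this likewise produces four terms.

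Subtracting, two pairs of terms cancel outright, and I expect the difference to collapse to
$$
\delta(a)-\delta_{r}(a)=-\sum_{i}\big((a\ast y_{i})\otimes x_{i}+(a\ast x_{i})\otimes y_{i}-y_{i}\otimes(x_{i}\ast a)-x_{i}\otimes(y_{i}\ast a)\big).
$$
The crucial observation is that this is exactly $-\hat\tau$—that is, $-\tau$—applied to $\big(\id\otimes\fl_{A}(a)-\fr_{A}(a)\otimes\id\big)(r+\tau(r))$. By hypothesis $r+\tau(r)$ is $(\fl_{A},\fr_{A})$-invariant, so that expression vanishes for every $a\in A$; hence $\delta=\delta_{r}$, giving $(A,[-,-],\delta)=(A,[-,-],\delta_{r})$ as Lie bialgebras.

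For the three consequences I would invoke the transfer results directly. For $(i)$, quasi-triangularity of $(A,\ast,\Delta)$ means $r$ solves the $\AYBE$ and $r+\tau(r)$ is $(\fl_{A},\fr_{A})$-invariant; Proposition \ref{pro:ass-Lie-YBE} then makes $r$ a solution of the $\CYBE$ in $(A,[-,-])$ with $r+\tau(r)$ being $\ad_{A}$-invariant, so Proposition \ref{pro:splie-bia}$(i)$ yields a quasi-triangular Lie bialgebra. Part $(ii)$ is the skew-symmetric specialization: then $r+\tau(r)=0$ is trivially $(\fl_{A},\fr_{A})$-invariant, so the main identity applies, and the skew-symmetric statement in Proposition \ref{pro:ass-Lie-YBE} together with Proposition \ref{pro:splie-bia}$(ii)$ gives triangularity. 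For $(iii)$, the factorizability map $\mathcal{I}=r^{\sharp}+\tau(r)^{\sharp}$ of Definition \ref{def:fact-assbia} is literally the same linear map $A^{\ast}\rightarrow A$ as the one in Proposition \ref{pro:splie-bia}$(iii)$, since the underlying space and $r$ are unchanged; thus if it is an isomorphism for the ASI bialgebra it remains one for the induced Lie bialgebra.

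The only real obstacle is the bookkeeping in the first step: tracking the eight signed tensor terms and recognizing the surviving combination as the $\tau$-image of the invariance expression. Everything else is a direct citation of Propositions \ref{pro:ass-Lie-YBE} and \ref{pro:splie-bia} together with the coboundary definitions, so no further structural difficulty arises.
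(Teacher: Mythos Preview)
Your proposal is correct and follows essentially the same approach as the paper: both compute $\delta=\Delta_{r}-\tau\Delta_{r}$, compare it with $\delta_{r}$, and observe that the discrepancy is killed precisely by the $(\fl_{A},\fr_{A})$-invariance of $r+\tau(r)$; the three consequences are then deduced exactly as you indicate via Proposition~\ref{pro:ass-Lie-YBE} and the identification of the two maps $\mathcal{I}$. The only cosmetic difference is that the paper substitutes the invariance relation directly to transform two of the four terms of $\delta(a)$ into the missing terms of $\delta_{r}(a)$, whereas you form the difference $\delta(a)-\delta_{r}(a)$ and identify it as $-\tau$ of the invariance expression; these are the same computation organized slightly differently.
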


\begin{proof}
Let $r=\sum_{i}x_{i}\otimes y_{i}$. If $r+\tau(r)$ is $(\fl_{A}, \fr_{A})$-invariant,
that is, for any $a\in A$,
$$
\sum_{i}\Big((a\ast x_{i})\otimes y_{i}-x_{i}\otimes(y_{i}\ast a)\Big)
=\sum_{i}\Big(y_{i}\otimes(x_{i}\ast a)-(a\ast y_{i})\otimes x_{i}\Big).
$$
Therefore, we have
\begin{align*}
\delta(a)&=\Delta_{r}(a)-\tau(\Delta_{r}(a))\\
&=\sum_{i}\Big(x_{i}\otimes(a\ast y_{i})-x_{i}\otimes(y_{i}\ast a)
+(a\ast x_{i})\otimes y_{i}-(x_{i}\ast a)\otimes y_{i}\Big)\\
&=(\id\otimes\ad_{A}(a)+\ad_{A}(a)\otimes\id)(r)\\
&=\delta_{r}(a),
\end{align*}
for any $a\in A$. Thus, $(A, [-,-], \delta)=(A, [-,-], \delta_{r})$ as Lie bialgebras.
Finally, by Proposition \ref{pro:ass-Lie-YBE}, we get $(i)$ and $(ii)$.
Note that the map $\mathcal{I}$ for the associative algebra $(A, \ast)$ and the
map $\mathcal{I}$ for the Lie algebra $(A, [-,-])$ are the same. We get $(iii)$ holds.
\end{proof}

Let $(A, \ast)$ be an associative algebra and $r\in A\otimes A$. If $r$ is a solution of
the $\AYBE$ in $(A, \ast)$ such that $r+\tau(r)$ is $(\fl_{A},\ad_{A})$-invariant,
then Proposition \ref{pro:qtAss-qtLie} also gives the following commutative diagram:
$$
\xymatrix@C=3cm@R=0.5cm{
\txt{$r$ \\ {\tiny a solution of the $\AYBE$}\\ {\tiny in $(A, \ast)$}}
\ar[d]_{{\rm Pro.}~\ref{pro:ass-Lie-YBE}}\ar[r]^{{\rm Pro.}~\ref{pro:qtass-bia}} &
\txt{$(A, \ast, \Delta_{r})$ \\ {\tiny an ASI bialgebra}}
\ar[d]^{{\rm Pro.}~\ref{pro:ASI-Liebia}}_{{\rm Pro.}~\ref{pro:qtAss-qtLie}} \\
\txt{$r$ \\ {\tiny a solution of the $\CYBE$}\\ {\tiny in $(A, [-,-])$}}
\ar[r]^{{\rm Pro.}~\ref{pro:splie-bia}\qquad\quad} &
\txt{$(A, [-,-], \delta)=(A\otimes B, [-,-], \delta_{r})$ \\
{\tiny the induced Lie bialgebra}}}
$$

\begin{pro}\label{pro:Ass-ind-ooper}
Let $(A, \ast)$ be an associative algebra and $(A, [-,-])$ be the induced
Lie algebra from $(A, \ast)$. If $P: A\rightarrow A$ is an $\mathcal{O}$-operator
of $(A, \ast)$ associated to $(A^{\ast}, \fr_{A}^{\ast}, \fl_{A}^{\ast})$, then
$P$ is also an $\mathcal{O}$-operator of $(A, [-,-])$ associated to
$(A^{\ast}, -\ad^{\ast}_{A})$.
\end{pro}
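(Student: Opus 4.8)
The plan is to unfold both $\mathcal{O}$-operator conditions and reduce the Lie-algebra identity to the associative one by a short direct manipulation, the only genuine input being the pointwise identity $\fr_{A}^{\ast}(a)-\fl_{A}^{\ast}(a)=-\ad_{A}^{\ast}(a)$ on $A^{\ast}$. (Here I read the stated map as $P:A^{\ast}\rightarrow A$, since the associated module is $A^{\ast}$.)

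First I would record the hypothesis: because $P$ is an $\mathcal{O}$-operator of $(A,\ast)$ associated to the coregular bimodule $(A^{\ast},\fr_{A}^{\ast},\fl_{A}^{\ast})$, for all $\xi,\eta\in A^{\ast}$ one has
$$
P(\xi)\ast P(\eta)=P\big(\fr_{A}^{\ast}(P(\xi))(\eta)+\fl_{A}^{\ast}(P(\eta))(\xi)\big).
$$
Swapping $\xi$ and $\eta$ gives the companion identity for $P(\eta)\ast P(\xi)$. Since the induced Lie bracket is the commutator $[P(\xi),P(\eta)]=P(\xi)\ast P(\eta)-P(\eta)\ast P(\xi)$, subtracting the two identities and using linearity of $P$ yields
$$
[P(\xi),P(\eta)]=P\Big((\fr_{A}^{\ast}-\fl_{A}^{\ast})(P(\xi))(\eta)-(\fr_{A}^{\ast}-\fl_{A}^{\ast})(P(\eta))(\xi)\Big).
$$

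The key step is then to identify $(\fr_{A}^{\ast}-\fl_{A}^{\ast})(a)$ with $-\ad_{A}^{\ast}(a)$ for each $a\in A$. I would verify this by pairing against an arbitrary $b\in A$: using $\fl_{A}(a)(b)=a\ast b$, $\fr_{A}(a)(b)=b\ast a$, and $\ad_{A}(a)(b)=[a,b]=a\ast b-b\ast a$, together with the dualization convention $\langle\mu^{\ast}(a)(\xi),b\rangle=\langle\xi,\mu(a)(b)\rangle$, one gets $\langle(\fr_{A}^{\ast}-\fl_{A}^{\ast})(a)(\xi),b\rangle=\langle\xi,b\ast a-a\ast b\rangle=-\langle\ad_{A}^{\ast}(a)(\xi),b\rangle$, whence $\fr_{A}^{\ast}-\fl_{A}^{\ast}=-\ad_{A}^{\ast}$ by nondegeneracy of the pairing. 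Substituting this into the previous display produces exactly
$$
[P(\xi),P(\eta)]=P\big((-\ad_{A}^{\ast})(P(\xi))(\eta)-(-\ad_{A}^{\ast})(P(\eta))(\xi)\big),
$$
which is the $\mathcal{O}$-operator condition for $(A,[-,-])$ associated to the coadjoint representation $(A^{\ast},-\ad_{A}^{\ast})$.

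There is no real obstacle; the argument is a two-line computation. The only point requiring care is the sign bookkeeping, since the Lie $\mathcal{O}$-operator convention carries minus signs both in the representation $-\ad_{A}^{\ast}$ and in the antisymmetric combination $\rho(P(v_{1}))(v_{2})-\rho(P(v_{2}))(v_{1})$. I would double-check that the single sign coming from $\fr_{A}^{\ast}-\fl_{A}^{\ast}=-\ad_{A}^{\ast}$ combines correctly with these so that the two $-\ad_{A}^{\ast}$ factors land on the correct arguments, confirming the identity above.
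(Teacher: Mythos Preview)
Your argument is correct, and it is exactly the natural direct computation. The paper itself states this proposition without proof, so there is nothing to compare against beyond noting that the author evidently regards it as an immediate verification; your write-up supplies precisely that verification. Your reading of the map as $P:A^{\ast}\rightarrow A$ is also right---the ``$P:A\rightarrow A$'' in the statement is a typo, since the associated module is $A^{\ast}$.
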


In particular, by considering the skew-symmetric solution of the $\AYBE$ in $(A, \ast)$,
we obtain the following commutative diagram:
$$
\xymatrix@C=3cm@R=0.5cm{
\txt{$r$ \\ {\tiny a skew-symmetric solution} \\ {\tiny of the $\AYBE$ in $(A, \ast)$}}
\ar[d]_-{{\rm Pro.}~\ref{pro:ass-Lie-YBE}}\ar[r]^-{{\rm Pro.}~\ref{pro:o-ass}} &
\txt{$r^{\sharp}$\\ {\tiny an $\mathcal{O}$-operator of $(A, \ast)$} \\
{\tiny associated to $(A^{\ast}, \fr_{A}^{\ast}, \fl_{A}^{\ast})$}}
\ar[d]^-{{\rm Pro.}~\ref{pro:Ass-ind-ooper}} \\
\txt{$r$ \\ {\tiny a skew-symmetric solution} \\ {\tiny of the $\CYBE$ in
$(A, [-,-])$}} \ar[r]^-{{\rm Pro.}~\ref{pro:o-lie}}
& \txt{$r^{\sharp}$ \\
{\tiny an $\mathcal{O}$-operator of $(A, [-,-])$ } \\
{\tiny associated to $(A^{\ast}, -\ad^{\ast}_{A})$}}}
$$

Next, we consider the relationship between the solutions of $\DYBE$ in a dendriform algebra
and the solutions of $\PLYBE$ in the induced pre-Lie algebra.

\begin{pro}\label{pro:D-PL-YBE}
Let $(D, \prec, \succ)$ be a dendriform algebra and $(D, \diamond)$ be the induced
pre-Lie algebra from $(D, \prec, \succ)$. Suppose $r=\sum_{i}x_{i}\otimes y_{i}\in
D\otimes D$ is a symmetric solution of the $\DYBE$ in $(D, \prec, \succ)$.
Then $r$ is also a symmetric solution of the $\PLYBE$ in $(D, \diamond)$.
\end{pro}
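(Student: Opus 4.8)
Since $r=\sum_i x_i\otimes y_i$ is symmetric by hypothesis, the element itself is already symmetric, so the only thing to prove is that $\mathbf{PL}_r=0$ in $D^{\otimes 3}$. The plan is to expand the induced pre-Lie product $d_1\diamond d_2=d_1\succ d_2-d_2\prec d_1$ inside each of the eight terms defining $\mathbf{PL}_r$, producing sixteen monomials that split into eight ``$\succ$-monomials'' and eight ``$\prec$-monomials''. The guiding idea is that, modulo the symmetry $r=\tau(r)$, every $\succ$-monomial is a permutation of the tensor factors of the single expression $\Omega_\succ:=\sum_{i,j}(x_i\succ x_j)\otimes y_i\otimes y_j$, and every $\prec$-monomial is such a permutation of $\Omega_\prec:=\sum_{i,j}(x_i\prec x_j)\otimes y_i\otimes y_j$. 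Here I would use that symmetry of $r$ permits interchanging $x_\bullet\leftrightarrow y_\bullet$ within any single summation index, which is exactly what is needed to normalize the two ``active'' factors of each product to $x_i\succ x_j$ (resp. $x_i\prec x_j$) and read off the resulting leg-permutation.

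Carrying out this normalization term by term, the eight $\succ$-monomials collapse (two cancelling pairs drop out) to $\Omega_\succ+\sigma_{13}\Omega_\succ-\sigma_{12}\Omega_\succ-\sigma_{132}\Omega_\succ$, where $\sigma_{12},\sigma_{13},\sigma_{132}$ denote the indicated permutations of the three tensor legs; similarly the eight $\prec$-monomials reduce to $\sigma_{13}\Omega_\prec+\sigma_{123}\Omega_\prec-\sigma_{23}\Omega_\prec-\sigma_{132}\Omega_\prec$. The crucial point is that $\mathbf{D}_r=0$ does \emph{not} force the $\prec$-part and the $\succ$-part to vanish separately; rather, after the same symmetry normalization the dendriform Yang--Baxter relation reads $\Omega_\prec-\sigma_{132}\Omega_\prec=\sigma_{123}\Omega_\succ-\Omega_\succ$, an identity that links a $\prec$-combination to a $\succ$-combination. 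Applying an arbitrary permutation $\sigma$ to the three legs yields the family $\sigma\Omega_\prec-\sigma\sigma_{132}\Omega_\prec=\sigma\sigma_{123}\Omega_\succ-\sigma\Omega_\succ$.

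To finish, I would feed two specific instances of this family, namely $\sigma=\sigma_{13}$ and $\sigma=\sigma_{132}$, into the leftover $\prec$-combination so as to rewrite it entirely in terms of $\Omega_\succ$; a short permutation computation (using relations such as $\sigma_{13}\sigma_{123}=\sigma_{12}$ and $\sigma_{132}\sigma_{123}=\id$) turns it into $\sigma_{12}\Omega_\succ-\sigma_{13}\Omega_\succ-\Omega_\succ+\sigma_{132}\Omega_\succ$, which is precisely the negative of the $\succ$-part found above. Hence $\mathbf{PL}_r=0$, and $r$ is a symmetric solution of the $\PLYBE$ in $(D,\diamond)$. The main obstacle is exactly this bookkeeping: one must track both the operation type and the exact placement of the ``partner'' elements $y_i,y_j$ through every symmetry interchange, and recognize that the two halves of $\mathbf{PL}_r$ cancel only after the mixed $\prec$/$\succ$ relation coming from $\mathbf{D}_r=0$ is used as a bridge. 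I would organize the computation by the tensor slot occupied by each product, which makes the matching of the sixteen monomials systematic and pins down the correct permutations and signs. An operator-theoretic route through the $\mathcal O$-operator characterizations of Propositions~\ref{pro:o-dend} and~\ref{pro:o-PL} is tempting, but the two coregular bimodules involved do not coincide on the nose, so the direct symmetry computation is the more reliable path.
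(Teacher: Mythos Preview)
Your argument is correct and is essentially the paper's proof, organized slightly differently. The paper expands the sixteen monomials and recognizes directly that, using the symmetry of $r$,
\[
\mathbf{PL}_r=(\id\otimes\tau)(\tau\otimes\id)(\id\otimes\tau)(\mathbf{D}_r)-(\tau\otimes\id)(\id\otimes\tau)(\mathbf{D}_r)=\sigma_{13}\mathbf{D}_r-\sigma_{132}\mathbf{D}_r,
\]
so $\mathbf{PL}_r=0$ follows immediately from $\mathbf{D}_r=0$. Your route separates the sixteen monomials into $\Omega_\succ$- and $\Omega_\prec$-halves and then invokes two permuted instances of $\mathbf{D}_r=0$; the two instances you use are precisely $\sigma=\sigma_{13}$ and $\sigma=\sigma_{132}$, so what you are doing is exactly subtracting $\sigma_{132}\mathbf{D}_r$ from $\sigma_{13}\mathbf{D}_r$, term by term. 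In other words, your ``bridge'' step and the paper's one-line identity are the same computation read in two directions. Your remark that the $\mathcal O$-operator route via Propositions~\ref{pro:o-dend} and~\ref{pro:o-PL} is less direct is also in line with the paper, which does not take that detour here.
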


\begin{proof}
First, note that
\begin{align*}
\mathbf{D}_{r}&=r_{12}\prec r_{13}+r_{12}\succ r_{13}-r_{13}\prec r_{23}-r_{23}\succ r_{12}\\
&=\sum_{i,j}\Big((x_{i}\prec x_{j})\otimes y_{i}\otimes y_{j}
+(x_{i}\succ x_{j})\otimes y_{i}\otimes y_{j}-x_{i}\otimes x_{j}\otimes(y_{i}\prec y_{j})
-x_{j}\otimes(x_{i}\succ y_{j})\otimes y_{i}\Big).
\end{align*}
Since $r$ is symmetric, i.e., $\sum_{i}x_{i}\otimes y_{i}=\sum_{i}y_{i}\otimes x_{i}$,
we have
\begin{align*}
\mathbf{PL}_{r}&=r_{13}\diamond r_{12}+r_{23}\diamond r_{12}+r_{21}\diamond r_{13}
+r_{23}\diamond r_{13}-r_{23}\diamond r_{21}-r_{12}\diamond r_{23}-r_{13}\diamond r_{21}
-r_{13}\diamond r_{23}\\
&=\sum_{i,j}\Big((x_{i}\succ x_{j})\otimes y_{j}\otimes y_{i}
-(x_{j}\prec x_{i})\otimes y_{j}\otimes y_{i}
+x_{j}\otimes(x_{i}\succ y_{j})\otimes y_{i}
-x_{j}\otimes(y_{j}\prec x_{i})\otimes y_{i}\\[-4mm]
&\qquad\quad +(y_{i}\succ x_{j})\otimes x_{i}\otimes y_{j}
-(x_{j}\prec y_{i})\otimes x_{i}\otimes y_{j}
+x_{j}\otimes x_{i}\otimes(y_{i}\succ y_{j})
-x_{j}\otimes x_{i}\otimes(y_{j}\prec y_{i})\\
&\qquad\quad -y_{j}\otimes(x_{i}\succ x_{j})\otimes y_{i}
+y_{j}\otimes(x_{j}\prec x_{i})\otimes y_{i}
-x_{i}\otimes(y_{i}\succ x_{j})\otimes y_{j}
+x_{i}\otimes(x_{j}\prec y_{i})\otimes y_{j}\\[-1mm]
&\qquad\quad -(x_{i}\succ y_{j})\otimes x_{j}\otimes y_{i}
+(y_{j}\prec x_{i})\otimes x_{j}\otimes y_{i}
-x_{i}\otimes x_{j}\otimes(y_{i}\succ y_{j})
+x_{i}\otimes x_{j}\otimes(y_{j}\prec y_{i})\Big)\\
&=(\id\otimes\tau)((\tau\otimes\id)((\id\otimes\tau)(\mathbf{D}_{r})))
-(\tau\otimes\id)((\id\otimes\tau)(\mathbf{D}_{r})).
\end{align*}
Thus, $r$ is also a symmetric solution of the $\PLYBE$ in $(D, \diamond)$ if
$r$ is symmetric solution of the $\DYBE$ in $(D, \prec, \succ)$.
\end{proof}

\begin{pro}\label{pro:qtD-qtpreLie}
Let $(D, \prec, \succ)$ be a dendriform algebra and $r\in D\otimes D$ be symmetric.
Suppose $(D, \prec$, $\succ, \theta_{\prec,r}, \theta_{\succ,r})$ is a triangular
dendriform $\md$-bialgebra associated with $r$ and $(D, \diamond, \vartheta)$ is
the induced pre-Lie bialgebra from $(D, \prec, \succ, \theta_{\prec,r},
\theta_{\succ,r})$. Then, we have $(D, \diamond, \vartheta)=(D, \diamond,
\vartheta_{r})$ as pre-Lie bialgebras, where $\vartheta_{r}$ is defined Eq.
\eqref{preli-cobo}.

Therefore, we get that $(D, \diamond, \vartheta)$ is a triangular pre-Lie bialgebra
if $(D, \prec, \succ, \theta_{\prec,r}, \theta_{\succ,r})$ is a triangular
dendriform $\md$-bialgebra.
\end{pro}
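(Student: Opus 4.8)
The plan is to reduce the whole statement to the single identity $\vartheta=\vartheta_{r}$ of linear maps $D\to D\otimes D$, since once this is established the triangularity claim is purely formal. By Proposition~\ref{pro:ASI-Liebia}$(ii)$ the induced pre-Lie coalgebra carries the comultiplication $\vartheta=\theta_{\succ,r}-\tau\theta_{\prec,r}$ (this is the map $\vartheta=\theta_{\succ}-\tau\theta_{\prec}$ recalled just before Proposition~\ref{pro:ASI-Liebia}), so the genuine content is to match this expression with the coboundary $\vartheta_{r}$ of Eq.~\eqref{preli-cobo}. The bridge between the two sides is the translation of the regular bimodule of $(D,\diamond)$ into the dendriform bimodule operators: from $d\diamond d'=d\succ d'-d'\prec d$ one reads off, for the regular bimodule maps $\fl_{A},\fr_{A}$ of $(D,\diamond)$ appearing in \eqref{preli-cobo},
$$
\fl_{A}(d)=\fl_{\succ}(d)-\fr_{\prec}(d),\qquad \fr_{A}(d)=\fr_{\succ}(d)-\fl_{\prec}(d).
$$

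Next I would fix $r=\sum_{i}x_{i}\otimes y_{i}$ and substitute $r_{\prec}=r$ and $r_{\succ}=-\tau(r)=-r$ (the last equality using that $r$ is symmetric) into Eqs.~\eqref{d-cobo1} and \eqref{d-cobo2}, obtaining explicit expansions of $\theta_{\prec,r}(d)$ and $\theta_{\succ,r}(d)$ in terms of $\prec,\succ$ and $r$. Forming $\vartheta(d)=\theta_{\succ,r}(d)-\tau\theta_{\prec,r}(d)$ produces six terms, and expanding $\vartheta_{r}(d)$ via the two operator identities above produces six terms as well. Three of these pairs coincide verbatim (the terms $-\sum_{i}(x_{i}\prec d)\otimes y_{i}$, $\sum_{i}x_{i}\otimes(d\prec y_{i})$ and $\sum_{i}x_{i}\otimes(d\succ y_{i})$). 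The remaining three terms of $\vartheta(d)$, namely $-\sum_{i}y_{i}\otimes(x_{i}\prec d)$, $-\sum_{i}y_{i}\otimes(x_{i}\succ d)$ and $\sum_{i}(d\succ y_{i})\otimes x_{i}$, match the corresponding terms of $\vartheta_{r}(d)$ only after applying the symmetry $\sum_{i}x_{i}\otimes y_{i}=\sum_{i}y_{i}\otimes x_{i}$ to relabel $x_{i}\leftrightarrow y_{i}$. Collecting everything yields $\vartheta(d)=\vartheta_{r}(d)$ for all $d\in D$, hence $(D,\diamond,\vartheta)=(D,\diamond,\vartheta_{r})$ as pre-Lie bialgebras.

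For the final assertion, a triangular dendriform $\md$-bialgebra associated with $r$ means precisely that $r$ is a symmetric solution of the $\DYBE$ in $(D,\prec,\succ)$; by Proposition~\ref{pro:D-PL-YBE} the same $r$ is then a symmetric solution of the $\PLYBE$ in $(D,\diamond)$, so by Proposition~\ref{pro:spec-plbia}$(i)$ the coboundary $(D,\diamond,\vartheta_{r})$ is a triangular pre-Lie bialgebra. Combined with $\vartheta=\vartheta_{r}$, the induced pre-Lie bialgebra is exactly this triangular one, finishing the proof. The only real obstacle is careful bookkeeping: keeping the flip $\tau$, the sign coming from $r_{\succ}=-r$, and the left/right conventions of $\fl_{\prec},\fr_{\prec},\fl_{\succ},\fr_{\succ}$ straight, and recognizing that the symmetry of $r$ is exactly what is needed to fold the ``flipped'' $\tau\theta_{\prec,r}$ contributions onto the second-tensor-factor terms of $\vartheta_{r}$ — without symmetry the two comultiplications would genuinely differ.
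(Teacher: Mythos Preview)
Your proposal is correct and follows essentially the same approach as the paper: both reduce the claim to the identity $\vartheta=\vartheta_{r}$, expand each side in terms of the dendriform operations $\prec,\succ$ applied to $r=\sum_{i}x_{i}\otimes y_{i}$, and match the six resulting terms using the symmetry of $r$; the triangularity conclusion is then drawn from Propositions~\ref{pro:D-PL-YBE} and~\ref{pro:spec-plbia}$(i)$ exactly as you do. The only cosmetic difference is the direction of the computation (the paper starts from $\vartheta_{r}(d)$ and rewrites it as $\theta_{\succ,r}(d)-\tau\theta_{\prec,r}(d)$, while you start from the latter), but the term-by-term bookkeeping and the use of symmetry to identify the three ``flipped'' terms are identical.
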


\begin{proof}
Let $r=\sum_{i}x_{i}\otimes y_{i}$. Since $r$ is symmetric, for any $d\in D$, we have
\begin{align*}
\vartheta_{r}(d)
&=\big(\fl_{A}(d)\otimes\id+\id\otimes(\fl_{A}-\fr_{A})(d)\big)(r)\\
&=\sum_{i}\Big((d\succ x_{i})\otimes y_{i}-(x_{i}\prec d)\otimes y_{i}
+x_{i}\otimes(d\succ y_{i})\\[-5mm]
&\qquad\quad-x_{i}\otimes(y_{i}\prec d)
-x_{i}\otimes(y_{i}\succ d)+x_{i}\otimes(d\prec y_{i})\Big)\\
&=\sum_{i}\Big(y_{i}\otimes(d\prec x_{i})+y_{i}\otimes(d\succ x_{i})
-(y_{i}\prec d)\otimes x_{i}\\[-5mm]
&\qquad\quad+(d\succ y_{i})\otimes x_{i}-y_{i}\otimes(x_{i}\prec d)
-y_{i}\otimes(x_{i}\succ d)\Big)\\
&=\theta_{\succ,r}(d)-\tau(\theta_{\prec,r}(d))\\
&=\vartheta(d).
\end{align*}
That is, $(D, \diamond, \vartheta)=(D, \diamond, \vartheta_{r})$ as pre-Lie bialgebras.
Thus, if $(D, \prec, \succ, \theta_{\prec,r}, \theta_{\succ,r})$ is a triangular
dendriform $\md$-bialgebra, i.e., $r$ is a symmetric solution of the $\DYBE$ in
$(D, \prec, \succ)$, by Propositions \ref{pro:spec-plbia} and \ref{pro:D-PL-YBE},
we get that $(D, \diamond, \vartheta)=(D, \diamond, \vartheta_{r})$ is a
triangular pre-Lie bialgebra.
\end{proof}

Let $(D, \prec, \succ)$ be a dendriform algebra. There is a close relationship between
solutions of symmetric solution of the $\DYBE$ in $(D, \prec, \succ)$ and the operator
$\mathcal{O}$-operator of $(D, \prec, \succ)$.

\begin{pro}[\cite{Bai}]\label{pro:o-D}
Let $(D, \prec, \succ)$ be a dendriform algebra and $r\in A\otimes A$ be symmetric.
Then $r$ is a solution of the $\DYBE$ in $(D, \prec, \succ)$ if and only if $r^{\sharp}$
is an $\mathcal{O}$-operator of $(D, \prec, \succ)$ associated to the coregular
bimodule $(D^{\ast}, \fr_{\succ}^{\ast}+\fr_{\prec}^{\ast}, -\fl_{\prec}^{\ast},
-\fr_{\succ}^{\ast}, \fl_{\prec}^{\ast}+\fl_{\succ}^{\ast})$.
\end{pro}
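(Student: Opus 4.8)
The plan is to prove the equivalence by converting the operator identity into a tensor identity in $D\otimes D\otimes D$ and matching it, term by term, with $\mathbf{D}_{r}=0$, exactly as in the associative and pre-Lie cases (Propositions \ref{pro:o-ass} and \ref{pro:o-PL}). Write $r=\sum_{i}x_{i}\otimes y_{i}$, so that $r^{\sharp}(\xi)=\sum_{i}\langle\xi,x_{i}\rangle y_{i}$ for $\xi\in D^{\ast}$. The symmetry $r=\tau(r)$ is the key structural input: it makes $r^{\sharp}$ self-adjoint for the canonical pairing, i.e. $\langle\xi,r^{\sharp}(\eta)\rangle=\langle\eta,r^{\sharp}(\xi)\rangle$ for all $\xi,\eta\in D^{\ast}$, and it permits the two Sweedler-type factors of $r$ to be interchanged inside any sum. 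First I would record the two defining equations of an $\mathcal{O}$-operator for the coregular bimodule $(D^{\ast},\fr_{\succ}^{\ast}+\fr_{\prec}^{\ast},-\fl_{\prec}^{\ast},-\fr_{\succ}^{\ast},\fl_{\prec}^{\ast}+\fl_{\succ}^{\ast})$ with $P=r^{\sharp}$, unfolding every dual map through its defining relation $\langle\mu^{\ast}(a)(\xi),v\rangle=\langle\xi,\mu(a)(v)\rangle$.

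Next I would test each of the two operator identities against an arbitrary third covector $\xi_{3}\in D^{\ast}$. Because the canonical pairing $D^{\ast}\otimes D\to\Bbbk$ is nondegenerate, an identity in $D$ holds if and only if its pairing with every $\xi_{3}$ holds; and using self-adjointness of $r^{\sharp}$ together with the definitions of $\fl_{\prec}^{\ast},\fr_{\prec}^{\ast},\fl_{\succ}^{\ast},\fr_{\succ}^{\ast}$, each such scalar identity is precisely the evaluation of $\langle\xi_{1}\otimes\xi_{2}\otimes\xi_{3},\,\cdot\,\rangle$ on an explicit element of $D\otimes D\otimes D$ assembled from $r\otimes r$. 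In this way the $\prec$-equation and the $\succ$-equation are each turned into a tensor identity in $D^{\otimes 3}$. I would then use the symmetry $r=\tau(r)$ to relabel the summation indices -- the same manoeuvre that produces the permutations $\tau\otimes\id$ and $\id\otimes\tau$ of $\mathbf{D}_{r}$ in Proposition \ref{pro:D-PL-YBE} -- so that the four constituent tensors $r_{12}\prec r_{13}$, $r_{12}\succ r_{13}$, $r_{13}\prec r_{23}$ and $r_{23}\succ r_{12}$ of $\mathbf{D}_{r}$ appear with the correct signs. Matching then shows that the two tensor identities together are equivalent to the single equation $\mathbf{D}_{r}=0$, and both directions follow at once since every step -- the pairing against $\xi_{3}$, the use of nondegeneracy, and the symmetry relabelings -- is reversible.

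The hard part will be the bookkeeping in the middle step. The coregular bimodule mixes the four operations through the combinations $\fr_{\succ}^{\ast}+\fr_{\prec}^{\ast}$ and $\fl_{\prec}^{\ast}+\fl_{\succ}^{\ast}$ and carries the minus signs in $-\fl_{\prec}^{\ast}$ and $-\fr_{\succ}^{\ast}$, so one must track carefully how the cross-terms these combinations generate are converted by self-adjointness. The subtle point is that, taken literally, the $\prec$-equation produces more $\prec$-products than $\succ$-products (and the $\succ$-equation conversely), whereas $\mathbf{D}_{r}$ is balanced; it is only after the symmetry relabelings that the two unbalanced identities can be consolidated onto $\mathbf{D}_{r}=0$. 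Choosing the relabelings so that each of the four DYBE terms is hit exactly once, with the right coefficient, is the decisive and most error-prone part of the argument, and it is precisely here that the hypothesis $r=\tau(r)$ is indispensable, since without symmetry the two operator equations cannot be merged into the single dendriform Yang--Baxter equation.
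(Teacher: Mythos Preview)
The paper does not supply its own proof of this proposition: it is quoted from \cite{Bai} without argument (and in fact the identical statement already appears earlier as Proposition~\ref{pro:o-dend}, likewise cited without proof). Your plan is the standard route for results of this type and is correct in outline: pair the two $\mathcal{O}$-operator identities against a third covector, use self-adjointness of $r^{\sharp}$ coming from $r=\tau(r)$ to rewrite every term as an evaluation on an explicit element of $D^{\otimes 3}$, and then match against $\mathbf{D}_{r}$.

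One small refinement of your description: rather than the two operator equations ``together'' collapsing to the single equation $\mathbf{D}_{r}=0$, what actually happens under the symmetry hypothesis is that each of the $\prec$- and $\succ$-equations unwinds to a tensor identity which is a fixed permutation of $\mathbf{D}_{r}$ (for instance the $\succ$-equation yields $(\id\otimes\tau)(\tau\otimes\id)\mathbf{D}_{r}=0$ after relabelling). Since $r=\tau(r)$ makes all such permutations equivalent to $\mathbf{D}_{r}=0$, each operator equation is \emph{separately} equivalent to the DYBE, and the apparent mismatch between two operator conditions and one tensor equation is resolved by this redundancy rather than by a genuine combination. Keeping this in mind will make the bookkeeping you flagged as ``the hard part'' considerably cleaner: you only need to chase one of the two equations all the way to $\mathbf{D}_{r}$, and then observe that the other follows by the same mechanism.
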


Moreover, by direct calculation, we have the following proposition.

\begin{pro}\label{pro:D-ind-ooper}
Let $(D, \prec, \succ)$ be a dendriform algebra and $(D, \diamond)$ be the induced
pre-Lie algebra from $(D, \prec, \succ)$. If $P: D^{\ast}\rightarrow D$ is an
$\mathcal{O}$-operator of $(D, \prec, \succ)$ associated to $(D^{\ast},
\fr_{\succ}^{\ast}+\fr_{\prec}^{\ast}, -\fl_{\prec}^{\ast}, -\fr_{\succ}^{\ast},
\fl_{\prec}^{\ast}+\fl_{\succ}^{\ast})$, then $P$ is also an $\mathcal{O}$-operator
of $(D, \diamond)$ associated to $(D^{\ast}, \fr^{\ast}_{D}-\fl^{\ast}_{D}, \fr^{\ast}_{D})$.
\end{pro}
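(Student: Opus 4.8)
The plan is to verify the single defining identity of a pre-Lie $\mathcal{O}$-operator directly from the two defining identities of the dendriform $\mathcal{O}$-operator, using only the relation $d_1\diamond d_2=d_1\succ d_2-d_2\prec d_1$ that defines the induced pre-Lie algebra. This is the exact analogue of Proposition \ref{pro:Ass-ind-ooper}, where an associative $\mathcal{O}$-operator becomes a Lie $\mathcal{O}$-operator through the commutator, and I would present it in the same ``by direct calculation'' style.

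First I would write $P(\xi_1)\diamond P(\xi_2)=P(\xi_1)\succ P(\xi_2)-P(\xi_2)\prec P(\xi_1)$ for $\xi_1,\xi_2\in D^{\ast}$. To the first summand I apply the $\succ$-part of the dendriform $\mathcal{O}$-operator equation with arguments $(\xi_1,\xi_2)$, and to the second summand the $\prec$-part with the arguments swapped to $(\xi_2,\xi_1)$. Since $P$ is an $\mathcal{O}$-operator both summands lie inside $P(\cdots)$, and linearity of $P$ combines them into a single term $P(\,\cdots\,)$. I then collect the argument according to which of $P(\xi_1),P(\xi_2)$ supplies the algebra element, grouping it into one coefficient acting through $P(\xi_1)$ on $\xi_2$ and one acting through $P(\xi_2)$ on $\xi_1$. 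The target is to recognize these as $(\fr_D^{\ast}-\fl_D^{\ast})(P(\xi_1))(\xi_2)$ and $\fr_D^{\ast}(P(\xi_2))(\xi_1)$ respectively, so that the sum becomes exactly the pre-Lie $\mathcal{O}$-operator identity for the coregular bimodule $(D^{\ast},\fr_D^{\ast}-\fl_D^{\ast},\fr_D^{\ast})$ of Proposition \ref{pro:o-PL}.

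The substantive part, and the one place I expect real difficulty, is this final recognition: matching the combination of the four dual operators $\fl_\prec^{\ast},\fr_\prec^{\ast},\fl_\succ^{\ast},\fr_\succ^{\ast}$ produced by the substitution against the two operators of the target bimodule. Here I would use the identities $\fl_D=\fl_\prec+\fl_\succ$ and $\fr_D=\fr_\prec+\fr_\succ$ recorded in Section \ref{sec:Prelim}, which express the sub-adjacent associative operators appearing in the target through the four dendriform operators; what then remains is purely the bookkeeping of the signs coming from the $\succ$-term, the swapped $\prec$-term, and the dendriform coregular bimodule of Proposition \ref{pro:o-D}. Since these sign conventions are notoriously error-prone and the coregular-bimodule labels must be read consistently across Propositions \ref{pro:o-D} and \ref{pro:o-PL}, I would cross-check the resulting identity on the two-dimensional Example \ref{ex:dendind-ASI} with $r=e_1\otimes e_1$, where $P=r^{\sharp}$ is completely explicit, before asserting it in general. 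Beyond this bookkeeping the argument requires no idea past that of Proposition \ref{pro:Ass-ind-ooper}, so the proposition should follow as a routine, if sign-sensitive, computation.
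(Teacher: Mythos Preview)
Your approach is exactly what the paper does: it states the proposition with the one-line justification ``by direct calculation'' and gives no further argument, so your plan to expand $P(\xi_1)\diamond P(\xi_2)=P(\xi_1)\succ P(\xi_2)-P(\xi_2)\prec P(\xi_1)$, substitute the two dendriform $\mathcal{O}$-operator identities, and regroup is precisely the intended computation.

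One caution on the bookkeeping you flagged: the operators $\fl_D,\fr_D$ in the target bimodule $(D^{\ast},\fr_D^{\ast}-\fl_D^{\ast},\fr_D^{\ast})$ are the \emph{pre-Lie} regular actions for $(D,\diamond)$, not the associative ones from Section~\ref{sec:Prelim}. Thus the identities you need are $\fl_D=\fl_\succ-\fr_\prec$ and $\fr_D=\fr_\succ-\fl_\prec$, not $\fl_D=\fl_\prec+\fl_\succ$ and $\fr_D=\fr_\prec+\fr_\succ$; the paper overloads the notation $\fl_D,\fr_D$ between the sub-adjacent associative algebra and the induced pre-Lie algebra, and Proposition~\ref{pro:o-PL} refers to the latter. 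With this correction your regrouping will close, and your planned cross-check on Example~\ref{ex:dendind-ASI} is a sensible safeguard against exactly this trap.
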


Thus, for a symmetric solution of the $\DYBE$ in a dendriform algebra $(D, \prec, \succ)$,
we have the following commutative diagram:
$$
\xymatrix@C=1.6cm@R=0.6cm{
\txt{$(D, \prec, \succ, \theta_{\prec,r}, \theta_{\succ,r})$ \\ {\tiny a triangular
dendriform $\md$-bialgebra}}
\ar[d]_{{\rm Pro.}~\ref{pro:qtD-qtpreLie}} &
\txt{$r$ \\ {\tiny a symmetric solution} \\ {\tiny of the $\DYBE$ in $(D, \prec, \succ)$}}
\ar[d]_-{{\rm Pro.}~\ref{pro:D-PL-YBE}}\ar[r]^-{{\rm Pro.}~\ref{pro:o-D}}
\ar[l]_-{{\rm Pro.}~\ref{pro:tridend}}    &
\txt{$r^{\sharp}$\\ {\tiny an $\mathcal{O}$-operator of $(D, \prec, \succ)$ associated} \\
{\tiny to $(D^{\ast}, -\fr_{\succ}^{\ast}-\fr_{\prec}^{\ast},
\fl_{\prec}^{\ast}, \fr_{\succ}^{\ast}, -\fl_{\prec}^{\ast}-\fl_{\succ}^{\ast})$}}
\ar[d]^-{{\rm Pro.}~\ref{pro:D-ind-ooper}} \\
\txt{$(D, \diamond, \vartheta_{r})$ \\
{\tiny a triangular pre-Lie bialgebra}}   &
\txt{$r$ \\ {\tiny a symmetric solution} \\ {\tiny of the $\PLYBE$ in
$(D, \diamond)$}} \ar[r]^-{{\rm Pro.}~\ref{pro:o-PL}}
\ar[l]_-{{\rm Pro.}~\ref{pro:spec-plbia}}   &
\txt{$r^{\sharp}$ \\
{\tiny an $\mathcal{O}$-operator of $(D, \diamond)$ } \\
{\tiny associated to $(D^{\ast}, \kl^{\ast}_{D}-\kr^{\ast}_{D}, -\kr^{\ast}_{D})$}}}
$$
Therefore, we obtained the three-dimensional commutative diagram in the introduction.


\begin{ex}\label{ex:3-comdiagm}
Let $(D, \prec, \succ)$ be the 2-dimensional dendriform algebra given in Example
\ref{ex:dendind-ASI} and $r=e_{1}\otimes e_{1}$. Then $r$ is a symmetric solution
of the $\DYBE$ in $(D, \prec, \succ)$. By Example \ref{ex:dendind-ASI}, we get
the back side of the three-dimensional diagram is commutative. By Example
\ref{ex:prelie-indlie}, we get the front side of the three-dimensional diagram
is commutative. By Example \ref{ex:com-diag}, we get the top side of the
three-dimensional diagram is commutative. Moreover, it can be directly verified
from the calculation results of Examples \ref{ex:prelie-indlie} and \ref{ex:dendind-ASI}
that, the left side, the right side and the bottom side of the three-dimensional diagram
are also commutative.
\end{ex}

\bigskip
\noindent
{\bf Acknowledgements. } This work was financially supported by National
Natural Science Foundation of China (No.11771122).

\smallskip
\noindent
{\bf Declaration of interests.} The authors have no conflicts of interest to disclose.

\smallskip
\noindent
{\bf Data availability.} Data sharing is not applicable to this article as no new data were
created or analyzed in this study.

 \end{document}